\let\mathcal\mathscr
\numberwithin{equation}{section}
\renewcommand{\geq}{\geqslant}
\renewcommand{\leq}{\leqslant}
\newtheorem{theorem}{Theorem}[section]
\newtheorem{corollary}[theorem]{Corollary}
\newtheorem{lemma}[theorem]{Lemma}
\newtheorem{proposition}[theorem]{Proposition}
\newtheorem{conjecture}[theorem]{Conjecture}
\theoremstyle{definition}
\newtheorem{definition}[theorem]{Definition}
\newtheorem{remark}[theorem]{Remark}
\newcommand{\Q}{\mathbb{Q}}
\newcommand{\NN}{\mathbb{N}}
\newcommand{\R}{\mathbb{R}}
\newcommand{\OO}{\mathcal{O}}
\newcommand{\F}{\mathbb{F}}
\newcommand{\p}{\mathcal{P}}
\newcommand{\N}{\mathbb{N}}
\newcommand{\q}{\mathfrak{q}}
\newcommand{\eps}{\varepsilon}
\newcommand{\abs}[1]{\left|#1\right|}
\def\End{\operatorname{End}}
\def\li{\operatorname{li}}
\def\QQ{\mathbb{Q}}
\def\ZZ{\mathbb{Z}}
\def\FF{\mathbb{F}}
\def\NN{\mathbb{N}}
\def\ve{\varepsilon}
\begin{document}
\date{\today}	
	
\title{Strong divisibility sequences and sieve methods}

\author{Tim Browning}
	\address{IST Austria\\
		Am Campus 1\\
		3400 Klosterneuburg\\
		Austria}

\email{tdb@ist.ac.at\\
matteo.verzobio@gmail.com}	
\author{Matteo Verzobio}

\subjclass[2020]{11N36 (11A41, 11A51, 11B39, 11B83, 11G05)}

\begin{abstract}
We investigate strong divisibility sequences and produce lower and upper bounds for the density of integers in the sequence
which only have (somewhat) large prime factors. We focus on the special cases of Fibonacci numbers and elliptic divisibility sequences, discussing the limitations of our methods. At the end of the paper there is an appendix by Sandro Bettin on divisor closed sets, that we use to study the density of prime terms that appear in strong divisibility sequences.
\end{abstract}

	\maketitle
	\thispagestyle{empty}
\setcounter{tocdepth}{1}
\tableofcontents

	\section{Introduction}

The arithmetic of sparse integer sequences continues to present a considerable challenge in analytic number theory. In this paper we shall use elementary sieve theory to prove some crude
arithmetical properties of sparse sequences that admit a suitable divisibility structure.  We begin by discussing some well-known sparse sequences and related literature.

\subsubsection*{Shifted exponentials}

Given $a>1$ and non-zero $b\in \ZZ$, 
a notorious open problem is to determine whether the sequence $a^n-b$ takes infinitely many prime values as $n$ runs over $\NN$. These sequences are sparse, having only $O(\log N)$ elements in the interval $[1,N]$.   Recent work of Grantham and Granville \cite{granville23} speculates on this for the sequence $a\cdot 2^n-b$ when $\gcd(a,b)=1$ and $a>0$; they conjecture that either there are finitely many primes in the sequence, or else the number of primes in the interval $[1,N]$ grows like $c_{a,b}\log N$ for a suitable constant $c_{a,b}>0$.
(When $a=b=1$, which is the case of Mersenne primes, their heuristic 
suggests the asymptotic formula should hold with $c_{1,1}=e^\gamma /\log 2$.)
In the opposite direction, 
in his book on sieve methods \cite{hooley}, Hooley conjectured that almost all numbers in the sequence $2^n+5$ are  composite.
Hooley's conjecture has now been resolved by 
J\"arviniemi and Ter\"av\"ainen \cite{joni}, subject to GRH and a form of the pair correlation conjecture.

\subsubsection*{Markoff numbers}

The sequence of Markoff numbers is obtained by listing 
all coordinates that arise as positive integer solutions to the Diophantine equation
$x_1^2+x_2^2+x_3^2=3x_1x_2x_3$. Thanks to work of 
Bourgain, Gamburd and Sarnak \cite{BGS}, we know that almost all Markoff numbers are composite. 
This is a sparse sequence, since the interval $[1,N]$ contains $O((\log N)^2)$ 
elements of  the sequence, by work of Zagier \cite{zagier}. 

\subsubsection*{Elliptic curves}
Let $E$ be an elliptic curve defined by a Weierstrass equation with integer coefficients
and let 
 $P\in E(\Q)$ be a non-torsion point. 
For each $n\in \NN$ we may define
$nP=(x_n:y_n:z_n)$, for relatively coprime $x_n,y_n,z_n\in \ZZ$.
 As we shall see in Section~\ref{sec:eds}, we must have $x_n=a_nb_n$ and $z_n=b_n^3$, for $a_n,b_n\in \ZZ$. The sequence $\{b_n\}_{n\in \N}$
is even sparser than the previous examples, containing only $O(\sqrt{\log N})$ integers from the interval $[1,N]$. 
Nonetheless, it is still possible to prove non-trivial results about their arithmetic. 
 In \cite[Section~4.4]{kowalski2008large}, Kowalski uses the large sieve to 
 show that elements of the sequence with few prime divisors have density $0$. More recently, similar sequences have been studied by 
Bhakta, Loughran, Rydin Myerson and Nakahara 
\cite{brauersieve} in the context of local solubility for families of conics parametrised
by elliptic curves. In 
\cite[Thm.~1.1]{brauersieve}, for example, they show that the set of $n\in \NN$ for which $y_n$ is a sum of two squares has density $0$, if $P$ belongs to the connected component of the identity of $E(\R)$. Due to the fact that the sequence is so sparse, it 
has been  conjectured  by Einsiedler, Everest and Ward \cite{primality} that 
 there are only finitely many prime numbers appearing in the sequence $b_n$.

\medskip

Our results in this paper pertain to any {\em strong divisibility sequence} (SDS), which is a sequence of positive integers $\{x_n\}_{n\in \NN}$ such that 
	$$
	\gcd(x_n,x_m)=x_{\gcd(n,m)}.
	$$
for all $m,n\geq 1$. We may henceforth assume without loss of generality that  $x_1=1$, on noting that the general case can be deduced from this case by applying the results to the sequence $x_n/x_1$.
The most basic example of a SDS is the 
sequence $x_n=n$, for which our results are all well-known or trivial. Further examples are provided by {\em elliptic divisibility sequences}, 
corresponding to the sequence $\{b_n\}_{n\in \NN}$
discussed above, and 
{\em Lucas sequences of the first kind}. The latter include 
sequences of 
{\em Fibonacci numbers} 
$$
F_n=\frac{\phi^n-(-\phi)^{-n}}{\sqrt{5}},
$$ 
where $\phi$ is the golden ratio, 
as well as {\em Mersenne numbers} $2^n-1$. 

There is a rich literature around the arithmetic of 
such numbers; for example, 
in 1965 Erd\H{o}s \cite{paul} conjectured that $\frac{1}{n}P(2^n-1)\to \infty$ as $n\to \infty$, where 
$P(2^n-1)$
is the size of the largest prime factor of $2^n-1$.
Stewart \cite{cam} has verified this conjecture in the wider context of  Lucas sequences, showing in particular that $P(2^n-1)>n \exp(c \log n/\log\log n)$, for a suitable constant $c>0$.
In a different direction, 
Luca and St{\u{a}}nic{\u{a}} \cite{luca2005prime} provide heuristics about the size of 
$\omega(x_n)$ for the Lucas sequences $x_n=(a^n-1)/(a-1)$, for integer $a>1$, conjecturing that 
$\omega(x_n)\geq (1+o(1))\log n \log\log n$ for almost all $n$.
Recent work of Kontorovich and Lagarias \cite{alex} posits a new {\em toral affine sieve conjecture}, 
under which it is possible to study the total number of prime divisors of products like $F_nF_{n+1}$. Thus it follows from \cite[Theorem~1.5]{alex} that 
$$
\Omega(F_n)+\Omega(F_{n+1})\gg \log n
$$ 
for all sufficiently large $n$, conditionally on their toral affine sieve  conjecture. 
Finally, a classification of those SDS which are simultaneously a {\em linear recurrence sequence} has been  completed   recently by Granville \cite{granville}.

\begin{definition}\label{def:md}
Let $\{x_n\}_{n\in \NN}$ be a SDS and 
let $d\in \NN$. Define $m_d$ to be the smallest positive integer such that $d\mid x_{m_d}$. If no such integer exists, we put $m_d=\infty$.
\end{definition}

There are two major challenges inherent in trying to apply sieve theory to a SDS. 
We shall see in Remark \ref{rem:LOD} that
\begin{equation}\label{eq:LOD}
\#\{n\leq N: d\mid x_n\} =\frac{N}{m_d}+O(1), 
\end{equation}
so that we have an associated {\em density function} $g(d)=1/m_d$. 
Unfortunately, in most cases of interest the function $g(d)$ is not multiplicative, whereas 
one of the basic sieve axioms is that $g(d)$ should be a non-negative multiplicative function, with 
$0<g(p)\leq 1$ for any prime $p$.
The second issue to contend with is the fact that a SDS can grow exponentially, as in the case of Lucas sequences, or even 
quadratic exponentially, as in the case of elliptic divisibility sequences. Most successful applications of sieve theory involve sequences with at most polynomial growth.

	The following is one of our main results and is based on the most basic sieve of 
	Eratosthenes--Legendre.
	
	\begin{theorem}\label{thm:gen}
	Let $\{x_n\}_{n\in \NN}$ be a SDS with the property that there 
		exists 
		 $\alpha>0$ such that, for all but finitely many primes $p$, we have 
			$
			m_p<p^\alpha.
			$
				Then
		$$
		\#\left\{n\leq N :
		p\mid x_n \implies p> \tfrac{1}{2}(\log N)(\log\log N) \right\}
		\gg \frac{N}{\log\log N}.
		$$
	\end{theorem}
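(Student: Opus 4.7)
The plan is to reduce the sieve problem for $\{x_n\}$---whose density function $g(d) = 1/m_d$ is not multiplicative---to the classical problem of counting integers $n \leq N$ with no prime factor below a certain threshold, where the sieve of Eratosthenes--Legendre applies directly. I would set $y := z^{\alpha}$ with $z = \tfrac{1}{2}(\log N)(\log\log N)$, and let $E$ denote the finite exceptional set on which the bound $m_p < p^{\alpha}$ may fail.

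The heart of the argument is the reduction. Recalling that $p \mid x_n$ is equivalent to $m_p \mid n$, I would show that if $n \leq N$ has every prime factor strictly greater than $y$, then for every prime $p \leq z$ with $p \notin E$ we have $m_p < p^{\alpha} \leq y$, so each prime divisor of $m_p$ is at most $m_p < y$ and cannot divide $n$; hence $m_p \nmid n$ and $p \nmid x_n$. The finitely many exceptional primes $p \in E$ with $m_p$ finite satisfy $m_p \leq C$ for an absolute constant $C$, so once $N$ is large enough that $y > C$ the same conclusion extends to them. This yields
\[
\#\left\{n \leq N : p \mid x_n \implies p > z\right\} \;\geq\; \#\left\{n \leq N : q \nmid n \text{ for every prime } q \leq y\right\}.
\]
I would then apply the sieve of Eratosthenes--Legendre to the right-hand side to obtain
\[
\#\left\{n \leq N : q \nmid n \text{ for every prime } q \leq y\right\} = N \prod_{p \leq y}\left(1 - \tfrac{1}{p}\right) + R,
\]
with $|R| \leq 2^{\pi(y)}$. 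By Mertens' theorem the main term is $(1 + o(1))\, N e^{-\gamma}/\log y$; since $\log y \sim \alpha \log\log N$, this produces a lower bound of order $N/\log\log N$, with implied constant depending on $\alpha$.

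The main obstacle will be controlling the error term $R$. The factor $\tfrac{1}{2}$ in the definition of $z$ appears calibrated precisely for this: when $\alpha \leq 1$, the prime number theorem gives $\pi(y) \leq \tfrac{1}{2}\log N + o(\log N)$, so $|R| \leq N^{(\log 2)/2 + o(1)}$, which is comfortably smaller than $N/\log\log N$. For $\alpha > 1$ the trivial bound on $R$ is too weak, and I would have to truncate the inclusion--exclusion at level $O(\log y)$ as in Brun's pure sieve---still an elementary variant of Eratosthenes--Legendre---to preserve the lower bound with error below the main term, exploiting the fact that $y$ remains polylogarithmic in $N$.
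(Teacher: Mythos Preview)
Your approach is correct and takes a genuinely different route from the paper. The paper applies the Eratosthenes--Legendre sieve directly to the sequence $\{x_n\}$, obtaining $\#A(N,z) = N\,M(\Pi_z) + O(2^{\pi(z)})$, and then invests most of its effort in lower-bounding the non-multiplicative main term $M(\Pi_z) = \sum_{d\mid\Pi_z} \mu(d)/m_d$: this is done via a combinatorial identity expressing $M(\Pi)$ as a sum of non-negative terms, the first of which is $\varphi(m_\Pi)/m_\Pi$, and then showing $\varphi(m_{\Pi_z})/m_{\Pi_z} \gg 1/\log z$ from the bound $m_{\Pi_z} \leq \Pi_z^{\alpha}$. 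Your reduction sidesteps this analysis entirely: the containment $\{n : q\mid n \Rightarrow q > y\} \subseteq \{n : p\mid x_n \Rightarrow p > z\}$, which uses only $m_p < y$ and $m_p \geq 2$, replaces the problem by the classical count of $y$-rough integers, where Mertens and the standard sieve give the answer immediately. The price you pay is that your error term lives at scale $y = z^{\alpha}$ rather than $z$, forcing the Brun truncation when $\alpha > 1$ (this works, since $y$ is polylogarithmic in $N$), whereas the paper's error $2^{\pi(z)} \leq \sqrt{N}$ is uniform in $\alpha$. On the other hand, the paper's positivity identity for $M(\Pi)$ is of independent interest---it is recorded as a standalone theorem and reused later in the paper---while your argument is more self-contained and more elementary for this specific result.
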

	
	This result provides a lower bound for the number of $z$-rough numbers in a SDS 
	$\{x_n\}_{n\in \NN}$, with $n\leq N$ and $z=\tfrac{1}{2}(\log N)(\log\log N)$.
		We will see that the hypothesis of the theorem is satisfied by Lucas sequences of the first kind, and by elliptic divisibility sequences. 
Moreover, note that if $\{x_n\}_{n\in \NN}$ and $\{y_n\}_{n\in \NN}$ are SDS  that satisfy the hypothesis of Theorem~\ref{thm:gen}, then also the sequence $\{\gcd(x_n,y_n)\}_{n\in \NN}$ satisfies the hypothesis.

We shall  illustrate our most general results with the sequence  $\{F_n\}_{n\in \NN}$  of Fibonacci  numbers. Note that $\Omega(F_n)=O(n)$, since 
$\Omega(m)=O(\log m)$ for any $m\in \NN$. (In fact, the latter bound is optimal, as one sees by considering the case in which $m$ is a prime power.) 
The following shows that we can often improve on this trivial bound for $\Omega(F_n)$.

\begin{corollary}
We have $$\#\{n\leq N: \Omega(F_n)\leq \frac{n}{\log \log n}\} \gg \frac{N}{\log\log N}.$$
\end{corollary}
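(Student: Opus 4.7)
The plan is to deduce the corollary directly from Theorem~\ref{thm:gen} applied to the Fibonacci sequence, together with the elementary observation that a positive integer all of whose prime divisors exceed $z$ has at most $\log(\text{itself})/\log z$ prime divisors, counted with multiplicity. The entire sieve-theoretic effort has already been absorbed into Theorem~\ref{thm:gen}, so what remains is essentially a size comparison.

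First I would verify the hypothesis of Theorem~\ref{thm:gen} for $\{F_n\}_{n \in \NN}$: for any prime $p \neq 5$, the rank of apparition $m_p$ divides $p - \left(\tfrac{5}{p}\right)$, hence $m_p \leq p+1 < p^2$, so the hypothesis holds with $\alpha = 2$ (as already anticipated in the discussion following the theorem). Setting $z := \tfrac{1}{2}(\log N)(\log\log N)$, Theorem~\ref{thm:gen} produces a set $\mathcal{S} \subseteq \{1,\ldots,N\}$ of size $\gg N/\log\log N$ such that every prime factor of $F_n$ exceeds $z$ for each $n \in \mathcal{S}$. For such $n$, the inequality $F_n \geq z^{\Omega(F_n)}$ combined with the elementary estimate $F_n \leq \phi^n$ yields
$$
\Omega(F_n) \leq \frac{\log F_n}{\log z} \leq \frac{n \log \phi}{\log z}.
$$

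The final step is a direct comparison with $n/\log\log n$. For $N$ sufficiently large one has $\log z = \log\log N + \log\log\log N - \log 2 \geq \log\log N \geq \log\log n$, and since $\log\phi < 1$ this forces
$$
\Omega(F_n) \leq \frac{(\log\phi)\, n}{\log\log n} \leq \frac{n}{\log\log n}
$$
for every $n \in \mathcal{S}$ with $n \geq 3$; the at most two smaller values of $n$ are clearly negligible for the claimed lower bound. The only substantive point in the argument is the numerical accident $\log\phi < 1$, which is precisely what makes the threshold $n/\log\log n$ in the corollary attainable by this method; the same strategy would give an analogous bound for any Lucas sequence of the first kind whose dominant root has logarithm less than one.
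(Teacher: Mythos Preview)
Your proof is correct and follows exactly the route the paper takes: apply Theorem~\ref{thm:gen} to the Fibonacci sequence and then convert the rough-number condition into a bound on $\Omega(F_n)$ via $F_n \geq z^{\Omega(F_n)}$ and $F_n \leq \phi^n$. The paper's one-line proof leaves all of these details implicit, so what you have written is essentially an expanded version of the same argument; your remark about the role of $\log\phi < 1$ is accurate for the specific choice $z = \tfrac{1}{2}(\log N)(\log\log N)$ used here.
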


\begin{proof}
This follows from Theorem~\ref{thm:gen}, on noting that 
 $\Omega(F_n)\leq \frac{n}{\log\log n}$ if $p>C\log N$ for all $p\mid F_n$, for a suitable constant $C>0$.
\end{proof}

Under additional hypotheses, we can also prove an upper bound for the kind of set considered in 
Theorem \ref{thm:gen}. We say that the term $x_n$ has a {\em primitive prime divisor} if there exists a prime $q$ such that $q\mid x_n$, but $q\nmid x_k$ for $k<n$.

\begin{theorem}\label{thm:upperf}
Let $\{x_n\}_{n\in \NN}$ be a SDS with the property that 
				$x_n$ has a primitive prime divisor 
				for all but finitely many terms.
Then, there exists a strictly increasing continuous function $f:\R_{\geq 1}\to \R$ such that $n\leq f(z)$ implies $x_{n}\leq z$,  for all $z\geq 1$. Moreover, for any such function $f$, we have 
	$$
	\#\left\{n\leq N :
	p\mid x_n \implies  p> z \right\}\ll \frac{N}{\log f(z)}+f(z)^2,
	$$
	for any $z$ such that $f(z)>1$.
\end{theorem}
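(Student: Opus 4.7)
The plan is to treat the two halves of the statement separately. For the existence of $f$, observe that the primitive primes $q_n$ of the $x_n$ (available for all $n\geq n_0$ by hypothesis) satisfy $m_{q_n}=n$ and are therefore pairwise distinct; since $q_n\leq x_n$, the map $n\mapsto q_n$ injects $\{n\geq n_0: x_n\leq z\}$ into the primes $\leq z$, so this set is finite for every $z\geq 1$. Consequently $h(z):=\min\{n: x_n>z\}-1$ is a finite, $\NN$-valued, non-decreasing function tending to $\infty$, and a continuous strictly increasing $f\leq h$ is produced by piecewise linear interpolation through the jumps of $h$ (shifted slightly to enforce strict monotonicity). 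Such $f$ automatically satisfies $n\leq f(z)\Rightarrow x_n\leq z$.

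For the sieve estimate, fix any admissible $f$ and write $S:=\{n\leq N: p\mid x_n\Rightarrow p>z\}$. The crucial algebraic fact is the equivalence $q\mid x_n\iff m_q\mid n$, valid for every prime $q$ with $m_q<\infty$: indeed $q\mid x_n$ forces $q\mid\gcd(x_n,x_{m_q})=x_{\gcd(n,m_q)}$, whence $m_q\mid n$ by minimality of $m_q$. Hence $n\in S$ iff $m_q\nmid n$ for every prime $q\leq z$. The next step exhibits many such $m_q$: for each integer $k\in[n_0,f(z)]$, the definition of $f$ gives $x_k\leq z$, and the primitive prime $q_k$ of $x_k$ then satisfies $q_k\leq z$ with $m_{q_k}=k$. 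Thus $[n_0,f(z)]\cap\NN\subseteq\{m_q:q\text{ prime},\,q\leq z\}$, and any $n\in S$ possesses no divisor in $[n_0,f(z)]$. Analysing this condition prime by prime -- each prime-power component $p^{v_p(n)}$ must be $<n_0$ or $>f(z)$, and once $f(z)\geq n_0^2$ the threshold $p^{\lceil\log_p n_0\rceil}\leq n_0^2$ lies in $[n_0,f(z)]$ and hence forces $p<n_0$ to appear with $p^{v_p(n)}<n_0$, while $p\geq n_0$ is compelled to satisfy $p>f(z)$ -- one produces a factorisation $n=mn'$ with $m\leq n_0^{\pi(n_0-1)}=O(1)$ and $P^-(n')>f(z)$. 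Therefore
\[
|S|\ll \Phi\bigl(N,f(z)\bigr)+1,\qquad \text{where } \Phi(x,y):=\#\{n\leq x: P^-(n)>y\}.
\]

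Finally, I invoke the classical sieve bound $\Phi(N,y)\ll N/\log y+y^2$ (valid for $y\geq 2$), provable by Brun's or Selberg's sieve with level of distribution $y^2$; this yields the desired estimate on $|S|$. The main obstacle is precisely this last step: obtaining the exponent $2$ in the error term is delicate, since a naive Eratosthenes--Legendre expansion gives an error of order $2^{\pi(y)}$ rather than $y^2$, so the truncation must be chosen with some care. By contrast, the structural reduction is essentially algebraic, although one must be attentive to the finitely many primes below $n_0$ (whose total smooth contribution is the absolute constant $n_0^{\pi(n_0-1)}$) and to the degenerate range $f(z)<n_0^2$, which is trivially absorbed into the $f(z)^2$ error.
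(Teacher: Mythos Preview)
Your argument is correct, but it takes a longer route than the paper's. The key difference is in how you exploit the primitive-prime hypothesis to constrain $n$. You apply it to \emph{every} integer $k\in[n_0,f(z)]$, deducing that $n\in S$ has no divisor in this interval, and then spend a paragraph analysing this ``divisor-free interval'' condition prime by prime to manufacture the factorisation $n=mn'$ with $m=O(1)$ and $P^-(n')>f(z)$. The paper instead applies the hypothesis only at \emph{prime} indices $p\in(C,f(z))$: if such a $p$ divides $n$ then $x_p\mid x_n$ by the divisibility property, and the primitive prime $q$ of $x_p$ satisfies $q\leq x_p\leq z$, so $q\mid x_n$ witnesses $n\notin S$. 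This gives in one line
\[
S\subseteq\{n\leq N:\ p\mid n\ \Rightarrow\ p\leq C\ \text{or}\ p\geq f(z)\},
\]
to which the Selberg sieve applies directly, without any intermediate structural analysis. Both approaches terminate in the same Selberg bound $\Phi(N,y)\ll N/\log y+y^2$; yours extracts slightly more (no divisor of \emph{any} size in $[n_0,f(z)]$), but this extra information is not needed for the stated conclusion, and restricting to prime indices from the outset bypasses the case analysis on small primes and the degenerate range $f(z)<n_0^2$ entirely.
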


According to \cite[Theorem 1]{schinzel} and 
\cite[Proposition 10]{silverman}, respectively, 
both Lucas sequences of the first kind (except for some trivial examples) and elliptic divisibility sequences satisfy the hypothesis of Theorem \ref{thm:upperf}.

Our next result is concerned with the density of prime terms that appear in a SDS $\{x_n\}_{n\in \NN}$.   
It is well-known that a Mersenne number $2^n-1$ is composite if $n$ is composite,
so that the set of Mersenne primes among the sequence $\{2^n-1\}_{n\in \NN}$ has 
density $0$.
We shall generalise this result to a more general SDS, as follows. 

		\begin{theorem}\label{thm:cardprimes}
				Let $\{x_n\}_{n\in \NN}$ be a SDS and let
				$
				\mathcal{A}=\{n\in \N: x_n=1\}.
				$ Assume that $\mathcal{A}$ has density $0$. Then  the set
		$
		\{n\in \N:  x_n \text{ is prime}\}
		$
		has density $0$.
	\end{theorem}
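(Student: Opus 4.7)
The plan begins with two structural observations. First, $\mathcal{A}$ is a divisor-closed subset of $\N$: if $n\in\mathcal{A}$ and $d\mid n$, then $x_d\mid x_n=1$, so $x_d=1$ and $d\in\mathcal{A}$. Second, if $x_n=q$ is prime, then $q\mid x_n$ forces $m_q\mid n$ (the set $\{n:q\mid x_n\}$ is closed under $\gcd$ and upward under divisibility, and so equals $m_q\N$), and the SDS identity shows that every divisor $d\mid n$ with $m_q\nmid d$ satisfies $x_d\mid q$ with $q\nmid x_d$, hence $x_d=1$ and $d\in\mathcal{A}$.

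I would first dispose of the $O(N/\log N)$ primes and $O(\sqrt{N})$ proper prime powers $p^k$ (with $k\geq 2$) up to $N$, both of which contribute $o(N)$. This reduces the problem to composite $n\leq N$ with $\omega(n)\geq 2$ and $x_n=q$ prime. For any such $n$, set
$$
n_0=\prod_{\substack{p\mid n\\ p\nmid m_q}}p^{v_p(n)},
$$
the largest divisor of $n$ coprime to $m_q$. Since $m_q\geq 2$ has at least one prime factor (none of which divides $n_0$), we have $m_q\nmid n_0$, so the second observation applied to $d=n_0$ yields $n_0\in\mathcal{A}$. Writing $n=M n_0$ with $\gcd(M,n_0)=1$ and $m_q\mid M$ exhibits every such $n$ as a coprime product with one factor in $\mathcal{A}$ and the other a multiple of the rank $m_q$ of some prime $q$ occurring as an SDS value; moreover $M$ is supported on the primes dividing $m_q$.

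The final step translates this structural conclusion into a density statement by invoking the appendix of Bettin on divisor-closed sets: its role here is to show that, given $\mathcal{A}$ divisor-closed of density zero, the set of $n$ admitting such a factorization has density zero. Combined with the negligible contributions from primes and prime powers, this completes the proof.

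The main obstacle is precisely the final step. A density-zero hypothesis on $\mathcal{A}$ alone is insufficient: for $\mathcal{A}=\{1,2\}$ the set of integers $n\equiv 2\pmod 4$ already has density $1/4$, so not every coprime factorization $n=Mn_0$ with $n_0\in\mathcal{A}\setminus\{1\}$ can be absorbed. One must instead exploit both the divisor-closed structure of $\mathcal{A}$ and the non-trivial constraint that $M$ is supported on the primes of $m_q$ and divisible by $m_q$ for some prime $q$ arising from the SDS. Packaging this information into a workable density-zero statement is exactly what Bettin's appendix accomplishes.
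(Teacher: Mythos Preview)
Your structural observations are correct and match the paper's. The gap is in your decomposition and in what you claim Bettin's appendix delivers.

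Bettin's Theorem~\ref{mth} states precisely that if $\mathcal{A}$ is divisor closed of density zero, then $\mathcal{A}\cdot\mathbb{P}_m$ has density zero, where $\mathbb{P}_m=\{n:\omega(n)\leq m\}$. It says nothing about products $\mathcal{A}\cdot\mathscr{M}$ where $\mathscr{M}$ is the set of $M$ you describe (supported on the primes of $m_q$ for some varying prime value $q$ of the SDS). Since $\omega(m_q)$ is not bounded uniformly in $q$, your $M$ is not constrained to lie in $\mathbb{P}_m$ for any fixed $m$, and so your factorisation $n=Mn_0$ does not place $n$ inside $\mathcal{A}\cdot\mathbb{P}_m$. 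Your final paragraph asserts that Bettin's appendix ``packages this information''; it does not.

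The paper's decomposition is simpler and fits Bettin's theorem exactly. Write $n=\prod_{i\leq u}p_i^{a_i}$ and set $n_i=n/p_i^{a_i}$. Each $x_{n_i}\in\{1,q\}$; if all were $q$ then $x_1=x_{\gcd_i n_i}=\gcd_i x_{n_i}=q$, contradiction. Hence some $n_i\in\mathcal{A}$, giving $n=p_i^{a_i}\cdot n_i\in\mathbb{P}_1\cdot\mathcal{A}$. Now Theorem~\ref{mth} with $m=1$ finishes the argument immediately, with no need to treat primes or prime powers $n$ separately. Note that your own observation already contains this: for any single prime $p\mid m_q$ (such a $p$ divides $n$ since $m_q\mid n$) one has $m_q\nmid n/p^{v_p(n)}$, hence $n/p^{v_p(n)}\in\mathcal{A}$. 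Peeling off one prime power, rather than the whole $m_q$-part, is the missing move.
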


Apart from
some trivial examples,   Lucas sequences of the first kind  satisfy the hypothesis of the theorem, 
as do elliptic divisibility sequences.
 Moreover, the assumption on the density of $\mathcal{A}$ cannot be removed, as we will show in Remark \ref{rem:necX}. 
The proof of Theorem \ref{thm:cardprimes} relies 
on an auxiliary fact that is provided for us by Sandro Bettin in Appendix \ref{app}. This states that,
given any divisor closed set of density $0$, the product set formed with the set of primes continues to have density $0$.

As an application of our main results, let us record upper and lower bounds for the  cardinality of 
$z$-rough numbers in the  Fibonacci sequence  $\{F_n\}_{n\in \NN}$, with $n\leq N$ and $z=\frac{1}{2}(\log N)(\log\log N)$. (In fact, the  following result shows that  the set of $n\leq N$ for which $F_n$ is $z$-rough has density $0$.)

  \begin{corollary}\label{cor:fib}
  	Let $\{F_n\}_{n\in \NN}$ be the sequence of Fibonacci numbers. Then 
  	$$
  	\frac{N}{\log\log N}\ll \#\left\{n\leq N :
  	p\mid F_n \implies  p> \tfrac{1}{2}(\log N)(\log\log N) \right\}
  	\ll \frac{N}{\log\log\log N}.
  	$$
  \end{corollary}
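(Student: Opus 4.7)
The plan is to derive both bounds by specializing the two general theorems to the Fibonacci case; the entire argument reduces to checking the relevant hypotheses for $\{F_n\}_{n\in\NN}$ and then making the substitution $z=\tfrac{1}{2}(\log N)(\log\log N)$.

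For the lower bound, I would verify the hypothesis of Theorem \ref{thm:gen}. The Fibonacci sequence is a Lucas sequence of the first kind and hence a SDS. For the rank-of-apparition bound, recall that the smallest $m_p$ with $p\mid F_{m_p}$ divides $p-\left(\tfrac{5}{p}\right)$, so $m_p\leq p+1< p^2$ for every odd prime $p\neq 5$, and in fact one may take any $\alpha>1$. Theorem \ref{thm:gen} therefore yields the lower bound $\gg N/\log\log N$ directly.

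For the upper bound, I would apply Theorem \ref{thm:upperf}. Its hypothesis on primitive prime divisors is exactly Carmichael's theorem, which guarantees that $F_n$ has a primitive prime divisor for every $n\geq 13$. To choose $f$, I would use $F_n\leq \phi^n/\sqrt{5}+1\leq \phi^n$, so that $F_n\leq z$ whenever $n\leq \log z/\log\phi$; thus one may take $f(z)=\log z/\log\phi$ on $[\phi,\infty)$ and interpolate linearly on $[1,\phi]$ to obtain a strictly increasing continuous function $f:\R_{\geq 1}\to \R$ with $f(z)\sim\log z/\log\phi$. Substituting $z=\tfrac{1}{2}(\log N)(\log\log N)$ gives $\log z=(1+o(1))\log\log N$, hence $f(z)\asymp\log\log N$ and $\log f(z)\sim\log\log\log N$, so
$$\frac{N}{\log f(z)}+f(z)^2\ll \frac{N}{\log\log\log N}+(\log\log N)^2\ll \frac{N}{\log\log\log N}.$$

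There is essentially no hard step: the argument is a bookkeeping exercise combining two black-box inputs (Theorems \ref{thm:gen} and \ref{thm:upperf}) with two classical facts about Fibonacci numbers (the rank-of-apparition divisibility and Carmichael's primitive divisor theorem). The only mild care point is constructing $f$ as a function defined on all of $\R_{\geq 1}$ with the correct asymptotic, so that the error term $f(z)^2$ is negligible next to the main term $N/\log f(z)$ for the chosen value of $z$; this is automatic because $f(z)$ is doubly-logarithmic in $N$ while the main term is nearly linear in $N$.
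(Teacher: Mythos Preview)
Your proposal is correct and follows essentially the same approach as the paper: both bounds are obtained by applying Theorems~\ref{thm:gen} and~\ref{thm:upperf} after checking their hypotheses for the Fibonacci sequence, and the paper likewise chooses $f$ logarithmic in $z$ (specifically $f(z)=\log_\phi(\sqrt{5}z-1)$, asymptotically equivalent to your $\log z/\log\phi$). Your verification of the hypotheses via the rank-of-apparition bound and Carmichael's theorem is slightly more explicit than the paper's, which simply cites that the assumptions are known to hold.
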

  \begin{proof}
  Recall that the assumptions of Theorems \ref{thm:gen} and \ref{thm:upperf} hold for the Fibonacci sequence. 
  	The lower bound therefore follows from Theorem \ref{thm:gen}. For the upper bound, we put $z=\tfrac{1}{2}(\log N)(\log\log N)$ and deduce from Theorem \ref{thm:upperf}  that
  	$$
  	\#\left\{n\leq N :
  	p\mid F_n \implies  p> \tfrac{1}{2}(\log N)(\log\log N) \right\}\ll \frac{N}{\log f(z)}+f(z)^2.
  	$$
	If $f(z)=\log_\phi(\sqrt{5}z-1)$, then it is  clear that $F_n\leq z$ if $n\leq f(z)$. The desired upper bound is now obvious. 
  \end{proof}

Much of this paper  is devoted to understanding the size of the expected main term in the application of sieve methods to a SDS,  with the  associated quantities $m_d$ from Definition \ref{def:md}. 
In view of \eqref{eq:LOD}, an application of inclusion--exclusion confers a special significance to the sum 
	\begin{equation}\label{eq:Mz}
M(\Pi)=\sum_{d\mid \Pi}\frac{\mu(d)}{m_d},
	\end{equation}
	for given $\Pi\in \NN$.
	(If $m_d=\infty$, then we count $\mu(d)/m_d$ as $0$ in this sum.)
	We shall prove the following lower bound.

	\begin{theorem}\label{thm:lower}
			Let $\{x_n\}_{n\in \NN}$ be a SDS with the property that there 
		exists 
		 $\alpha>0$ 	 
		 such that 			$
			m_p<p^\alpha
			$
for all but finitely many primes.
Then $$
M(\Pi_{z})\gg  \frac{1}{\log z}$$
where $\Pi_z=\prod_{p<z} p$.
\end{theorem}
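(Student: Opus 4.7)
The plan is to realise $M(\Pi_z)$ as the natural density of an arithmetic set of integers, and then to bound that density from below by restricting to integers with no small prime factor and invoking Mertens's theorem.

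First, I would record the standard fact that in any SDS one has $d\mid x_n$ if and only if $m_d\mid n$. The direction $m_d\mid n\Rightarrow d\mid x_n$ follows since $\gcd(x_n,x_{m_d})=x_{m_d}$, so $d\mid x_n$. The reverse direction is by contradiction: if $d\mid x_n$ but $m_d\nmid n$, set $g=\gcd(n,m_d)<m_d$; then $d\mid \gcd(x_n,x_{m_d})=x_g$, violating the minimality of $m_d$. Consequently $\#\{n\leq N:d\mid x_n\}=\lfloor N/m_d\rfloor$, and M\"obius inversion applied to $\gcd(x_n,\Pi_z)$ yields
\[
\#\{n\leq N:\gcd(x_n,\Pi_z)=1\}=\sum_{d\mid\Pi_z}\mu(d)\lfloor N/m_d\rfloor=N\cdot M(\Pi_z)+O(2^{\pi(z)}).
\]
With $z$ fixed, dividing by $N$ and sending $N\to\infty$ identifies $M(\Pi_z)$ as the natural density of the set $A_z:=\{n\in\NN:m_p\nmid n \text{ for every prime } p<z\}$.

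The heart of the argument is then the observation: if $n$ has no prime factor $\leq z^\alpha$, and $m_p$ has all its prime factors $\leq z^\alpha$, then $m_p\nmid n$, because $m_p\geq 2$ would force some prime factor of $m_p$ to divide $n$. By hypothesis, outside a finite exceptional set $E$ of primes we have $m_p<p^\alpha\leq z^\alpha$, and the observation applies for every $p<z$ with $p\notin E$. For the finitely many $p\in E$ with $m_p$ finite, $m_p$ is a fixed integer whose prime factors lie below $z^\alpha$ once $z$ is large enough; for $p\in E$ with $m_p=\infty$ the condition $m_p\nmid n$ is vacuous. Hence for $z$ sufficiently large,
\[
S_z:=\{n\in\NN:q\nmid n \text{ for every prime } q\leq z^\alpha\}\subseteq A_z.
\]
Mertens's theorem gives the density of $S_z$ as $\prod_{q\leq z^\alpha}(1-1/q)\sim e^{-\gamma}/(\alpha\log z)$, so $M(\Pi_z)\gg 1/\log z$, as required.

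I do not anticipate a serious obstacle. The only subtlety is the finite exceptional set of primes where $m_p\geq p^\alpha$, and this is absorbed into a negligible constant by taking $z$ large enough that each of the (finitely many) fixed values of $m_p$ has all its prime factors below $z^\alpha$; the bulk of the argument is essentially just the elementary sieve bound combined with Mertens.
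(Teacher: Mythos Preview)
Your proof is correct. It is, however, a genuinely different route from the one taken in the paper.

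The paper first establishes a combinatorial identity (Lemma~2.6) expressing $M(\Pi)$, for square-free $\Pi$ with $m_\Pi<\infty$, as a sum of non-negative terms
\[
M(\Pi)=\sum_{\substack{j\mid m_\Pi\\ \gcd(x_j,\Pi)=1}}\frac{\varphi(m_\Pi/j)}{m_\Pi},
\]
and then keeps only the term $j=1$ to deduce $M(\Pi)\geq \varphi(m_\Pi)/m_\Pi$. The hypothesis $m_p<p^\alpha$ is then used to bound $m_\Pi\leq\prod_{p<z}m_p<\Pi_z^\alpha$, so that $\log\log m_\Pi\leq \log z+O(1)$, and the familiar estimate $\varphi(n)/n\gg 1/\log\log n$ finishes the argument. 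Your approach bypasses the combinatorial identity entirely: you interpret $M(\Pi_z)$ as the density of $A_z=\{n:m_p\nmid n\text{ for all }p<z\}$ and observe directly that the set $S_z$ of integers free of prime factors $\leq z^\alpha$ is contained in $A_z$ (since each finite $m_p$ has all its prime factors $\leq z^\alpha$ once $z$ is large), then apply Mertens. In effect, your $S_z$ sits inside the set $\{n:\gcd(n,m_{\Pi_z})=1\}$, whose density is exactly the paper's $\varphi(m_{\Pi_z})/m_{\Pi_z}$, so the two lower bounds are closely related; but your derivation is shorter and more elementary for the purpose of this theorem alone. The paper's identity, on the other hand, is reused later (e.g.\ in the Koblitz-prime analysis of Section~6) and also underpins the matching upper bound $\varphi(m_{\Pi_z})/m_{\Pi_z}\ll 1/\log z$ discussed in Lemma~2.8, so it earns its keep elsewhere.

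One small point: you rely on $m_p\geq 2$, which follows from the standing convention $x_1=1$; it would be worth making this explicit.
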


	Consider the SDS given by  Mersenne numbers $x_n=2^n-1$. In this case 
		computational evidence seems to suggest that
		$$
		M(\Pi_{z})\sim   \frac{1}{\log z},
		$$
		as $z\to \infty$. 
We have illustrated this in  Figure \ref{fig}. (In Remark \ref{rem:rat}  we will see that 
$M(\Pi_{z})\ll   \frac{1}{\log\log z}$, 
for Mersenne numbers.)

In \cite[Section~7]{granville23}, Grantham and Granville briefly discuss the challenges around  applying sieve methods to  sequences of the form $2^n-b$. One of their suggested modifications 
involves changing  the ordering  in $M(\Pi_{z})$, which is  
by the size of the prime divisors of $d$.  In 
 private communication with the authors, Granville has asked whether similar behaviour is expected for the sum
$$
\sum_{\substack{d\in \NN\\ 
m_d\leq z}}\frac{\mu(d)}{m_d},
$$
as $z\to \infty$.

		\begin{figure}
		\begin{tikzpicture}[xscale=1.5, yscale=1.5]
			\begin{axis}
				[ymin=0,ymax=0.8,xmin=0,xmax=600]
				\pgfmathsetlengthmacro\MajorTickLength{0.5}
				\pgfplotsset{every x tick label/.append style={font=\tiny, yshift=0.5ex}}
				\pgfplotsset{every y tick label/.append style={font=\tiny, xshift=0.5ex}}
				\addplot[blue, only marks, mark size=0.2mm] coordinates {
( 5 , 0.50000 )
( 7 , 0.33333 )
( 11 , 0.33333 )
( 13 , 0.33333 )
( 17 , 0.33333 )
( 19 , 0.33333 )
( 23 , 0.30303 )
( 29 , 0.30303 )
( 31 , 0.24242 )
( 37 , 0.24242 )
( 41 , 0.24242 )
( 43 , 0.24242 )
( 47 , 0.23188 )
( 53 , 0.23188 )
( 59 , 0.23188 )
( 61 , 0.23188 )
( 67 , 0.23188 )
( 71 , 0.23188 )
( 73 , 0.23188 )
( 79 , 0.23188 )
( 83 , 0.23188 )
( 89 , 0.23188 )
( 97 , 0.23188 )
( 101 , 0.23188 )
( 103 , 0.23188 )
( 107 , 0.23188 )
( 109 , 0.23188 )
( 113 , 0.23188 )
( 127 , 0.19876 )
( 131 , 0.19876 )
( 137 , 0.19876 )
( 139 , 0.19876 )
( 149 , 0.19876 )
( 151 , 0.19876 )
( 157 , 0.19876 )
( 163 , 0.19876 )
( 167 , 0.19636 )
( 173 , 0.19636 )
( 179 , 0.19636 )
( 181 , 0.19636 )
( 191 , 0.19636 )
( 193 , 0.19636 )
( 197 , 0.19636 )
( 199 , 0.19636 )
( 211 , 0.19636 )
( 223 , 0.19106 )
( 227 , 0.19106 )
( 229 , 0.19106 )
( 233 , 0.18447 )
( 239 , 0.18447 )
( 241 , 0.18447 )
( 251 , 0.18447 )
( 257 , 0.18447 )
( 263 , 0.18306 )
( 269 , 0.18306 )
( 271 , 0.18306 )
( 277 , 0.18306 )
( 281 , 0.18306 )
( 283 , 0.18306 )
( 293 , 0.18306 )
( 307 , 0.18306 )
( 311 , 0.18306 )
( 313 , 0.18306 )
( 317 , 0.18306 )
( 331 , 0.18306 )
( 337 , 0.18306 )
( 347 , 0.18306 )
( 349 , 0.18306 )
( 353 , 0.18306 )
( 359 , 0.18204 )
( 367 , 0.18204 )
( 373 , 0.18204 )
( 379 , 0.18204 )
( 383 , 0.18108 )
( 389 , 0.18108 )
( 397 , 0.18108 )
( 401 , 0.18108 )
( 409 , 0.18108 )
( 419 , 0.18108 )
( 421 , 0.18108 )
( 431 , 0.17687 )
( 433 , 0.17687 )
( 439 , 0.17445 )
( 443 , 0.17445 )
( 449 , 0.17445 )
( 457 , 0.17445 )
( 461 , 0.17445 )
( 463 , 0.17445 )
( 467 , 0.17445 )
( 479 , 0.17372 )
( 487 , 0.17372 )
( 491 , 0.17372 )
( 499 , 0.17372 )
( 503 , 0.17303 )
( 509 , 0.17303 )
( 521 , 0.17303 )
( 523 , 0.17303 )
( 541 , 0.17303 )
( 547 , 0.17303 )
( 557 , 0.17303 )
( 563 , 0.17303 )
( 569 , 0.17303 )
( 571 , 0.17303 )
( 577 , 0.17303 )
( 587 , 0.17303 )
( 593 , 0.17303 )

				};
				\addplot[red] coordinates {
( 4 , 0.72134 )
( 5 , 0.62134 )
( 6 , 0.55811 )
( 7 , 0.51390 )
( 8 , 0.48090 )
( 9 , 0.45512 )
( 10 , 0.43430 )
( 11 , 0.41703 )
( 12 , 0.40243 )
( 13 , 0.38987 )
( 14 , 0.37892 )
( 15 , 0.36927 )
( 16 , 0.36067 )
( 17 , 0.35295 )
( 18 , 0.34598 )
( 19 , 0.33962 )
( 20 , 0.33381 )
( 21 , 0.32846 )
( 22 , 0.32352 )
( 23 , 0.31893 )
( 24 , 0.31466 )
( 25 , 0.31067 )
( 26 , 0.30693 )
( 27 , 0.30341 )
( 28 , 0.30010 )
( 29 , 0.29697 )
( 30 , 0.29401 )
( 31 , 0.29121 )
( 32 , 0.28854 )
( 33 , 0.28600 )
( 34 , 0.28358 )
( 35 , 0.28127 )
( 36 , 0.27906 )
( 37 , 0.27694 )
( 38 , 0.27491 )
( 39 , 0.27296 )
( 40 , 0.27108 )
( 41 , 0.26928 )
( 42 , 0.26755 )
( 43 , 0.26587 )
( 44 , 0.26426 )
( 45 , 0.26270 )
( 46 , 0.26119 )
( 47 , 0.25973 )
( 48 , 0.25832 )
( 49 , 0.25695 )
( 50 , 0.25562 )
( 51 , 0.25433 )
( 52 , 0.25309 )
( 53 , 0.25187 )
( 54 , 0.25069 )
( 55 , 0.24954 )
( 56 , 0.24843 )
( 57 , 0.24734 )
( 58 , 0.24628 )
( 59 , 0.24525 )
( 60 , 0.24424 )
( 61 , 0.24326 )
( 62 , 0.24230 )
( 63 , 0.24136 )
( 64 , 0.24045 )
( 65 , 0.23956 )
( 66 , 0.23868 )
( 67 , 0.23783 )
( 68 , 0.23699 )
( 69 , 0.23618 )
( 70 , 0.23538 )
( 71 , 0.23459 )
( 72 , 0.23383 )
( 73 , 0.23308 )
( 74 , 0.23234 )
( 75 , 0.23162 )
( 76 , 0.23091 )
( 77 , 0.23021 )
( 78 , 0.22953 )
( 79 , 0.22886 )
( 80 , 0.22820 )
( 81 , 0.22756 )
( 82 , 0.22693 )
( 83 , 0.22630 )
( 84 , 0.22569 )
( 85 , 0.22509 )
( 86 , 0.22450 )
( 87 , 0.22392 )
( 88 , 0.22335 )
( 89 , 0.22278 )
( 90 , 0.22223 )
( 91 , 0.22169 )
( 92 , 0.22115 )
( 93 , 0.22062 )
( 94 , 0.22010 )
( 95 , 0.21959 )
( 96 , 0.21909 )
( 97 , 0.21859 )
( 98 , 0.21810 )
( 99 , 0.21762 )
( 100 , 0.21715 )
( 101 , 0.21668 )
( 102 , 0.21622 )
( 103 , 0.21576 )
( 104 , 0.21531 )
( 105 , 0.21487 )
( 106 , 0.21443 )
( 107 , 0.21400 )
( 108 , 0.21358 )
( 109 , 0.21316 )
( 110 , 0.21274 )
( 111 , 0.21234 )
( 112 , 0.21193 )
( 113 , 0.21153 )
( 114 , 0.21114 )
( 115 , 0.21075 )
( 116 , 0.21037 )
( 117 , 0.20999 )
( 118 , 0.20961 )
( 119 , 0.20924 )
( 120 , 0.20888 )
( 121 , 0.20852 )
( 122 , 0.20816 )
( 123 , 0.20781 )
( 124 , 0.20746 )
( 125 , 0.20711 )
( 126 , 0.20677 )
( 127 , 0.20643 )
( 128 , 0.20610 )
( 129 , 0.20577 )
( 130 , 0.20544 )
( 131 , 0.20512 )
( 132 , 0.20480 )
( 133 , 0.20448 )
( 134 , 0.20417 )
( 135 , 0.20386 )
( 136 , 0.20356 )
( 137 , 0.20325 )
( 138 , 0.20295 )
( 139 , 0.20266 )
( 140 , 0.20236 )
( 141 , 0.20207 )
( 142 , 0.20178 )
( 143 , 0.20150 )
( 144 , 0.20122 )
( 145 , 0.20094 )
( 146 , 0.20066 )
( 147 , 0.20038 )
( 148 , 0.20011 )
( 149 , 0.19984 )
( 150 , 0.19958 )
( 151 , 0.19931 )
( 152 , 0.19905 )
( 153 , 0.19879 )
( 154 , 0.19853 )
( 155 , 0.19828 )
( 156 , 0.19802 )
( 157 , 0.19777 )
( 158 , 0.19753 )
( 159 , 0.19728 )
( 160 , 0.19704 )
( 161 , 0.19680 )
( 162 , 0.19656 )
( 163 , 0.19632 )
( 164 , 0.19608 )
( 165 , 0.19585 )
( 166 , 0.19562 )
( 167 , 0.19539 )
( 168 , 0.19516 )
( 169 , 0.19493 )
( 170 , 0.19471 )
( 171 , 0.19449 )
( 172 , 0.19427 )
( 173 , 0.19405 )
( 174 , 0.19383 )
( 175 , 0.19362 )
( 176 , 0.19341 )
( 177 , 0.19319 )
( 178 , 0.19298 )
( 179 , 0.19278 )
( 180 , 0.19257 )
( 181 , 0.19236 )
( 182 , 0.19216 )
( 183 , 0.19196 )
( 184 , 0.19176 )
( 185 , 0.19156 )
( 186 , 0.19136 )
( 187 , 0.19116 )
( 188 , 0.19097 )
( 189 , 0.19078 )
( 190 , 0.19058 )
( 191 , 0.19039 )
( 192 , 0.19020 )
( 193 , 0.19002 )
( 194 , 0.18983 )
( 195 , 0.18965 )
( 196 , 0.18946 )
( 197 , 0.18928 )
( 198 , 0.18910 )
( 199 , 0.18892 )
( 200 , 0.18874 )
( 201 , 0.18856 )
( 202 , 0.18839 )
( 203 , 0.18821 )
( 204 , 0.18804 )
( 205 , 0.18786 )
( 206 , 0.18769 )
( 207 , 0.18752 )
( 208 , 0.18735 )
( 209 , 0.18718 )
( 210 , 0.18702 )
( 211 , 0.18685 )
( 212 , 0.18669 )
( 213 , 0.18652 )
( 214 , 0.18636 )
( 215 , 0.18620 )
( 216 , 0.18604 )
( 217 , 0.18588 )
( 218 , 0.18572 )
( 219 , 0.18556 )
( 220 , 0.18540 )
( 221 , 0.18525 )
( 222 , 0.18509 )
( 223 , 0.18494 )
( 224 , 0.18479 )
( 225 , 0.18464 )
( 226 , 0.18448 )
( 227 , 0.18433 )
( 228 , 0.18419 )
( 229 , 0.18404 )
( 230 , 0.18389 )
( 231 , 0.18374 )
( 232 , 0.18360 )
( 233 , 0.18345 )
( 234 , 0.18331 )
( 235 , 0.18316 )
( 236 , 0.18302 )
( 237 , 0.18288 )
( 238 , 0.18274 )
( 239 , 0.18260 )
( 240 , 0.18246 )
( 241 , 0.18232 )
( 242 , 0.18218 )
( 243 , 0.18205 )
( 244 , 0.18191 )
( 245 , 0.18178 )
( 246 , 0.18164 )
( 247 , 0.18151 )
( 248 , 0.18138 )
( 249 , 0.18124 )
( 250 , 0.18111 )
( 251 , 0.18098 )
( 252 , 0.18085 )
( 253 , 0.18072 )
( 254 , 0.18059 )
( 255 , 0.18046 )
( 256 , 0.18034 )
( 257 , 0.18021 )
( 258 , 0.18008 )
( 259 , 0.17996 )
( 260 , 0.17983 )
( 261 , 0.17971 )
( 262 , 0.17959 )
( 263 , 0.17946 )
( 264 , 0.17934 )
( 265 , 0.17922 )
( 266 , 0.17910 )
( 267 , 0.17898 )
( 268 , 0.17886 )
( 269 , 0.17874 )
( 270 , 0.17862 )
( 271 , 0.17850 )
( 272 , 0.17839 )
( 273 , 0.17827 )
( 274 , 0.17815 )
( 275 , 0.17804 )
( 276 , 0.17792 )
( 277 , 0.17781 )
( 278 , 0.17769 )
( 279 , 0.17758 )
( 280 , 0.17747 )
( 281 , 0.17736 )
( 282 , 0.17724 )
( 283 , 0.17713 )
( 284 , 0.17702 )
( 285 , 0.17691 )
( 286 , 0.17680 )
( 287 , 0.17669 )
( 288 , 0.17659 )
( 289 , 0.17648 )
( 290 , 0.17637 )
( 291 , 0.17626 )
( 292 , 0.17616 )
( 293 , 0.17605 )
( 294 , 0.17595 )
( 295 , 0.17584 )
( 296 , 0.17574 )
( 297 , 0.17563 )
( 298 , 0.17553 )
( 299 , 0.17542 )
( 300 , 0.17532 )
( 301 , 0.17522 )
( 302 , 0.17512 )
( 303 , 0.17502 )
( 304 , 0.17492 )
( 305 , 0.17482 )
( 306 , 0.17472 )
( 307 , 0.17462 )
( 308 , 0.17452 )
( 309 , 0.17442 )
( 310 , 0.17432 )
( 311 , 0.17422 )
( 312 , 0.17413 )
( 313 , 0.17403 )
( 314 , 0.17393 )
( 315 , 0.17384 )
( 316 , 0.17374 )
( 317 , 0.17365 )
( 318 , 0.17355 )
( 319 , 0.17345 )
( 320 , 0.17336 )
( 321 , 0.17327 )
( 322 , 0.17317 )
( 323 , 0.17308 )
( 324 , 0.17299 )
( 325 , 0.17290 )
( 326 , 0.17280 )
( 327 , 0.17271 )
( 328 , 0.17262 )
( 329 , 0.17253 )
( 330 , 0.17244 )
( 331 , 0.17235 )
( 332 , 0.17226 )
( 333 , 0.17217 )
( 334 , 0.17208 )
( 335 , 0.17200 )
( 336 , 0.17191 )
( 337 , 0.17182 )
( 338 , 0.17173 )
( 339 , 0.17164 )
( 340 , 0.17156 )
( 341 , 0.17147 )
( 342 , 0.17138 )
( 343 , 0.17130 )
( 344 , 0.17121 )
( 345 , 0.17113 )
( 346 , 0.17104 )
( 347 , 0.17096 )
( 348 , 0.17088 )
( 349 , 0.17079 )
( 350 , 0.17071 )
( 351 , 0.17063 )
( 352 , 0.17054 )
( 353 , 0.17046 )
( 354 , 0.17038 )
( 355 , 0.17030 )
( 356 , 0.17022 )
( 357 , 0.17013 )
( 358 , 0.17005 )
( 359 , 0.16997 )
( 360 , 0.16989 )
( 361 , 0.16981 )
( 362 , 0.16973 )
( 363 , 0.16965 )
( 364 , 0.16957 )
( 365 , 0.16949 )
( 366 , 0.16942 )
( 367 , 0.16934 )
( 368 , 0.16926 )
( 369 , 0.16918 )
( 370 , 0.16910 )
( 371 , 0.16903 )
( 372 , 0.16895 )
( 373 , 0.16887 )
( 374 , 0.16880 )
( 375 , 0.16872 )
( 376 , 0.16865 )
( 377 , 0.16857 )
( 378 , 0.16850 )
( 379 , 0.16842 )
( 380 , 0.16834 )
( 381 , 0.16827 )
( 382 , 0.16820 )
( 383 , 0.16812 )
( 384 , 0.16805 )
( 385 , 0.16798 )
( 386 , 0.16790 )
( 387 , 0.16783 )
( 388 , 0.16776 )
( 389 , 0.16768 )
( 390 , 0.16761 )
( 391 , 0.16754 )
( 392 , 0.16747 )
( 393 , 0.16740 )
( 394 , 0.16733 )
( 395 , 0.16726 )
( 396 , 0.16718 )
( 397 , 0.16711 )
( 398 , 0.16704 )
( 399 , 0.16697 )
( 400 , 0.16690 )
( 401 , 0.16683 )
( 402 , 0.16677 )
( 403 , 0.16670 )
( 404 , 0.16663 )
( 405 , 0.16656 )
( 406 , 0.16649 )
( 407 , 0.16642 )
( 408 , 0.16636 )
( 409 , 0.16629 )
( 410 , 0.16622 )
( 411 , 0.16615 )
( 412 , 0.16608 )
( 413 , 0.16602 )
( 414 , 0.16595 )
( 415 , 0.16588 )
( 416 , 0.16582 )
( 417 , 0.16575 )
( 418 , 0.16569 )
( 419 , 0.16562 )
( 420 , 0.16556 )
( 421 , 0.16549 )
( 422 , 0.16543 )
( 423 , 0.16536 )
( 424 , 0.16530 )
( 425 , 0.16523 )
( 426 , 0.16517 )
( 427 , 0.16510 )
( 428 , 0.16504 )
( 429 , 0.16498 )
( 430 , 0.16491 )
( 431 , 0.16485 )
( 432 , 0.16479 )
( 433 , 0.16472 )
( 434 , 0.16466 )
( 435 , 0.16460 )
( 436 , 0.16454 )
( 437 , 0.16448 )
( 438 , 0.16441 )
( 439 , 0.16435 )
( 440 , 0.16429 )
( 441 , 0.16423 )
( 442 , 0.16417 )
( 443 , 0.16411 )
( 444 , 0.16405 )
( 445 , 0.16399 )
( 446 , 0.16393 )
( 447 , 0.16387 )
( 448 , 0.16381 )
( 449 , 0.16375 )
( 450 , 0.16369 )
( 451 , 0.16363 )
( 452 , 0.16357 )
( 453 , 0.16351 )
( 454 , 0.16345 )
( 455 , 0.16339 )
( 456 , 0.16333 )
( 457 , 0.16327 )
( 458 , 0.16322 )
( 459 , 0.16316 )
( 460 , 0.16310 )
( 461 , 0.16304 )
( 462 , 0.16298 )
( 463 , 0.16293 )
( 464 , 0.16287 )
( 465 , 0.16281 )
( 466 , 0.16276 )
( 467 , 0.16270 )
( 468 , 0.16264 )
( 469 , 0.16259 )
( 470 , 0.16253 )
( 471 , 0.16247 )
( 472 , 0.16242 )
( 473 , 0.16236 )
( 474 , 0.16231 )
( 475 , 0.16225 )
( 476 , 0.16220 )
( 477 , 0.16214 )
( 478 , 0.16208 )
( 479 , 0.16203 )
( 480 , 0.16198 )
( 481 , 0.16192 )
( 482 , 0.16187 )
( 483 , 0.16181 )
( 484 , 0.16176 )
( 485 , 0.16170 )
( 486 , 0.16165 )
( 487 , 0.16160 )
( 488 , 0.16154 )
( 489 , 0.16149 )
( 490 , 0.16144 )
( 491 , 0.16138 )
( 492 , 0.16133 )
( 493 , 0.16128 )
( 494 , 0.16122 )
( 495 , 0.16117 )
( 496 , 0.16112 )
( 497 , 0.16107 )
( 498 , 0.16101 )
( 499 , 0.16096 )
( 500 , 0.16091 )
( 501 , 0.16086 )
( 502 , 0.16081 )
( 503 , 0.16076 )
( 504 , 0.16071 )
( 505 , 0.16065 )
( 506 , 0.16060 )
( 507 , 0.16055 )
( 508 , 0.16050 )
( 509 , 0.16045 )
( 510 , 0.16040 )
( 511 , 0.16035 )
( 512 , 0.16030 )
( 513 , 0.16025 )
( 514 , 0.16020 )
( 515 , 0.16015 )
( 516 , 0.16010 )
( 517 , 0.16005 )
( 518 , 0.16000 )
( 519 , 0.15995 )
( 520 , 0.15990 )
( 521 , 0.15985 )
( 522 , 0.15980 )
( 523 , 0.15976 )
( 524 , 0.15971 )
( 525 , 0.15966 )
( 526 , 0.15961 )
( 527 , 0.15956 )
( 528 , 0.15951 )
( 529 , 0.15946 )
( 530 , 0.15942 )
( 531 , 0.15937 )
( 532 , 0.15932 )
( 533 , 0.15927 )
( 534 , 0.15923 )
( 535 , 0.15918 )
( 536 , 0.15913 )
( 537 , 0.15908 )
( 538 , 0.15904 )
( 539 , 0.15899 )
( 540 , 0.15894 )
( 541 , 0.15890 )
( 542 , 0.15885 )
( 543 , 0.15880 )
( 544 , 0.15876 )
( 545 , 0.15871 )
( 546 , 0.15866 )
( 547 , 0.15862 )
( 548 , 0.15857 )
( 549 , 0.15853 )
( 550 , 0.15848 )
( 551 , 0.15844 )
( 552 , 0.15839 )
( 553 , 0.15834 )
( 554 , 0.15830 )
( 555 , 0.15825 )
( 556 , 0.15821 )
( 557 , 0.15816 )
( 558 , 0.15812 )
( 559 , 0.15807 )
( 560 , 0.15803 )
( 561 , 0.15799 )
( 562 , 0.15794 )
( 563 , 0.15790 )
( 564 , 0.15785 )
( 565 , 0.15781 )
( 566 , 0.15776 )
( 567 , 0.15772 )
( 568 , 0.15768 )
( 569 , 0.15763 )
( 570 , 0.15759 )
( 571 , 0.15755 )
( 572 , 0.15750 )
( 573 , 0.15746 )
( 574 , 0.15742 )
( 575 , 0.15737 )
( 576 , 0.15733 )
( 577 , 0.15729 )
( 578 , 0.15724 )
( 579 , 0.15720 )
( 580 , 0.15716 )
( 581 , 0.15712 )
( 582 , 0.15707 )
( 583 , 0.15703 )
( 584 , 0.15699 )
( 585 , 0.15695 )
( 586 , 0.15690 )
( 587 , 0.15686 )
( 588 , 0.15682 )
( 589 , 0.15678 )
( 590 , 0.15674 )
( 591 , 0.15669 )
( 592 , 0.15665 )
( 593 , 0.15661 )
};
			\end{axis}
		\end{tikzpicture}
		\caption
		{The blue graph is the graph of the function $M(\Pi_{z})$ for Mersenne numbers, the red one is the graph of $\frac{1}{\log z}$.}
		\label{fig}
	\end{figure}
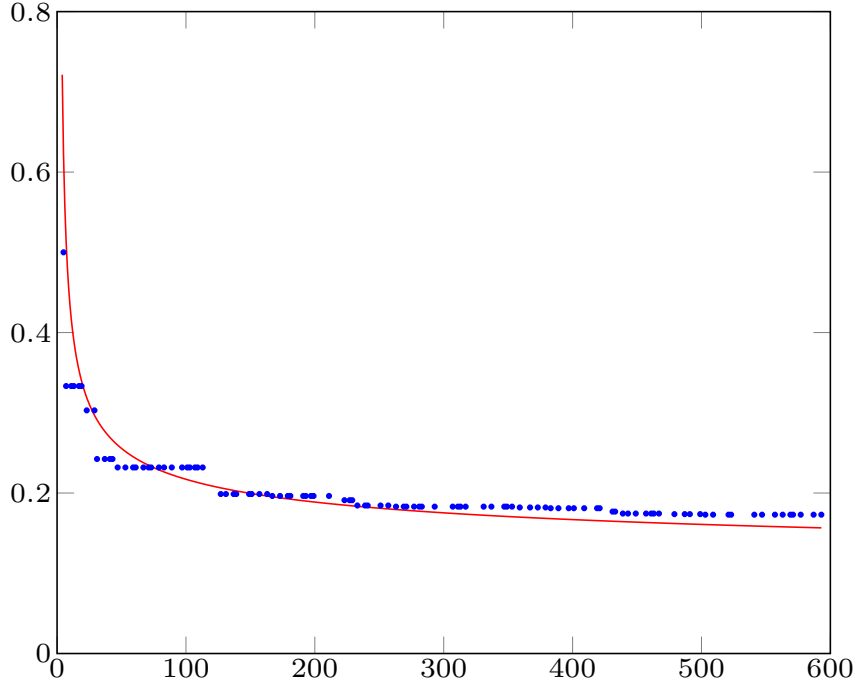

\medskip

Let us close this introduction by outlining the contents of the paper.
Firstly, in Section \ref{sec:propmd} we shall study some properties of $m_d$ and prove 
Theorem \ref{thm:lower}.
With this to hand, the proof of 
Theorems \ref{thm:gen}, \ref{thm:upperf}, and 
\ref{thm:cardprimes} 
will be carried out in 
Section \ref{sec:E-L}, although the latter does not use any sieve theory.  
In Section 
\ref{sec:eds} we shall specialise to elliptic divisibility sequences, and investigate $M(\Pi)$ under additional hypotheses in Sections \ref{sec:cheb}  and \ref{sec:koblitz}.
In particular, in Theorem \ref{thm:finalkob}, we shall  prove 
a refinement of Theorem \ref{thm:gen} for 
 non-CM elliptic curves, subject to some standard hypotheses.
 Finally, in Appendix \ref{app}, 
 Sandro Bettin has supplied a useful fact about  
 the density of the product of 
 the set of primes with 
 a divisor closed set.

\subsection*{Acknowledgements}
The authors 
are very grateful to 
Andrew Granville, 
Dimitris Koukoulopoulos, Davide Lombardo, Florian Luca,
Igor Shparlinski,  and Joni Ter\"av\"ainen for useful comments.
While working on this paper
the first author was supported  by 
a FWF grant (DOI 10.55776/P32428)
and 
the second author was  supported by the European Union’s Horizon 2020 research and 
innovation program under the Marie Skłodowska-Curie Grant Agreement No. 
101034413.
	
		\section{Properties of $m_d$}\label{sec:propmd}
	Let $\{x_n\}_{n\in \NN}$ be a strong divisibility sequence (SDS), so that 
		$\gcd(x_n,x_m)=x_{\gcd(n,m)}$ 
for all  $m,n\in \NN$.  Clearly this implies that $x_m\mid x_n$ if $m\mid n$, so that $\{x_n\}_{n\in \NN}$ is a {\em divisibility sequence}. As we pointed out in the introduction, we will work under the assumption $x_1=1$; the general case can be deduced from this case by applying the results to the sequence $x_n/x_1$. We begin by collecting together some basic properties of the function $m_d$ from Definition \ref{def:md}.

		\begin{lemma}\label{lemma:mddivniff}
		We have $d\mid x_n$ if and only if $m_d\mid n$.
	\end{lemma}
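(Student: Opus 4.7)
The plan is to establish the two directions separately, both of which follow essentially from unpacking the SDS axiom together with the minimality in the definition of $m_d$.

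For the easy direction, suppose $m_d \mid n$. Since $\{x_n\}$ is a SDS, it is in particular a divisibility sequence (as noted just before the lemma), so $m_d \mid n$ forces $x_{m_d} \mid x_n$. Combined with the defining property $d \mid x_{m_d}$, this immediately yields $d \mid x_n$.

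For the converse, assume $d \mid x_n$. I also have $d \mid x_{m_d}$ by definition of $m_d$, so $d$ divides the gcd $\gcd(x_n,x_{m_d})$. Invoking the strong divisibility property, this gcd equals $x_{\gcd(n,m_d)}$, and hence $d \mid x_{\gcd(n,m_d)}$. Now $\gcd(n,m_d)$ is a positive integer that is at most $m_d$, but by minimality of $m_d$ it must in fact equal $m_d$, which is exactly the statement $m_d \mid n$.

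Neither step is difficult; the only subtlety worth flagging is that the argument implicitly uses $m_d < \infty$ when invoking minimality, but if $m_d = \infty$ then the statement $d \mid x_n$ never occurs (otherwise $m_d$ would be finite and bounded by $n$), so the equivalence holds vacuously in that case as well.
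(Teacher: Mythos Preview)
Your proof is correct and follows exactly the same line as the paper's: use the divisibility-sequence property for the easy direction, and the SDS identity $\gcd(x_n,x_{m_d})=x_{\gcd(n,m_d)}$ together with minimality of $m_d$ for the converse. Your explicit handling of the $m_d=\infty$ case is a nice addition that the paper leaves implicit.
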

	\begin{proof}
		If $m_d\mid n$, then $d\mid x_{m_d}\mid x_n$ since we are working with divisibility sequences. If $d\mid x_n$, then $d\mid \gcd(x_{m_d},x_n)=x_{\gcd(n,m_d)}$. By definition, $m_d$ is the smallest positive integer such that $x_{m_d}$ is divisible by $d$, whence $\gcd(n,m_d)\geq m_d$. This happens only if $m_d\mid n$.
	\end{proof}
	
	\begin{remark}\label{rem:LOD}
	Note that  \eqref{eq:LOD} is an immediate consequence of Lemma 
	\ref{lemma:mddivniff}.
	\end{remark}
	
	\begin{lemma}\label{lemma:mdivisibility}
		If $k\mid j$ then $m_k\mid m_j$.
	\end{lemma}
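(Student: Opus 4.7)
The plan is straightforward and relies on the previous lemma. First I would dispose of the trivial case $m_j = \infty$: with the stated convention, $m_k \mid m_j$ holds vacuously regardless of the value of $m_k$. So assume $m_j$ is finite. By definition of $m_j$ we have $j \mid x_{m_j}$. Since $k \mid j$ by hypothesis, transitivity of divisibility gives $k \mid x_{m_j}$.

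Now I would simply invoke Lemma \ref{lemma:mddivniff} with $d = k$ and $n = m_j$: the conclusion $k \mid x_{m_j}$ is equivalent to $m_k \mid m_j$, which is exactly what we wanted. In particular, $m_k$ is also finite in this case.

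There is no real obstacle here; the lemma is an immediate corollary of Lemma \ref{lemma:mddivniff} together with the transitivity of divisibility. The only subtlety worth flagging explicitly is the edge case where $m_j = \infty$, so that the statement of the lemma makes sense in all cases covered by Definition \ref{def:md}.
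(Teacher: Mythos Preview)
Your proof is correct and follows essentially the same route as the paper: use $k\mid j\mid x_{m_j}$ and then apply Lemma~\ref{lemma:mddivniff} to conclude $m_k\mid m_j$. The only difference is that you treat the edge case $m_j=\infty$ explicitly, which the paper leaves implicit.
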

	\begin{proof}
Let   $k\mid j$. Then $k\mid j\mid x_{m_j}$, whence 
		 Lemma \ref{lemma:mddivniff} implies that
		$m_k\mid m_j$.
	\end{proof}
	
	\begin{lemma}\label{lemma:lcm}
		Let $d_1,d_2\in \NN$. Then
		$$
		m_{[d_1,d_2]}=[m_{d_1},m_{d_2}],
		$$
		where $[\cdot,\cdot]$ denotes the least common multiple.
	\end{lemma}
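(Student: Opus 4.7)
The plan is to prove the two divisibilities $m_{[d_1,d_2]} \mid [m_{d_1},m_{d_2}]$ and $[m_{d_1},m_{d_2}] \mid m_{[d_1,d_2]}$ separately, using the two preceding lemmas as the only inputs.

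For the direction $[m_{d_1},m_{d_2}] \mid m_{[d_1,d_2]}$, I would observe that $d_i \mid [d_1,d_2]$ for $i=1,2$, so Lemma \ref{lemma:mdivisibility} immediately gives $m_{d_i} \mid m_{[d_1,d_2]}$. Since both $m_{d_1}$ and $m_{d_2}$ divide $m_{[d_1,d_2]}$, their least common multiple does too.

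For the reverse divisibility, I would set $L = [m_{d_1},m_{d_2}]$ and use Lemma \ref{lemma:mddivniff} in both directions. Since $m_{d_i} \mid L$, that lemma gives $d_i \mid x_L$ for $i=1,2$, hence $[d_1,d_2] \mid x_L$. Applying Lemma \ref{lemma:mddivniff} once more (in the opposite direction) yields $m_{[d_1,d_2]} \mid L$, as required. Combining the two divisibilities gives the equality.

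I don't expect a genuine obstacle here: the statement is a direct translation of the strong divisibility property via the characterisation $d \mid x_n \iff m_d \mid n$ furnished by Lemma \ref{lemma:mddivniff}, together with the monotonicity of $d \mapsto m_d$ from Lemma \ref{lemma:mdivisibility}. The only mild subtlety is to avoid circular reasoning by deducing one divisibility with the monotonicity lemma and the other with the characterisation lemma, rather than trying to establish both in the same way.
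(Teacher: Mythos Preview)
Your proof is correct and follows essentially the same route as the paper: both obtain $[m_{d_1},m_{d_2}]\mid m_{[d_1,d_2]}$ from Lemma~\ref{lemma:mdivisibility}, and both obtain the reverse divisibility by showing $d_1,d_2\mid x_{[m_{d_1},m_{d_2}]}$ and then invoking Lemma~\ref{lemma:mddivniff}. The only cosmetic difference is that the paper unpacks the step $d_i\mid x_L$ via the divisibility-sequence property $x_{m_{d_i}}\mid x_L$, whereas you appeal directly to the ``if'' direction of Lemma~\ref{lemma:mddivniff}.
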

	\begin{proof}
		Since $d_1\mid [d_1,d_2]$, we have $m_{d_1}\mid m_{[d_1,d_2]}$ by Lemma \ref{lemma:mdivisibility}. In the same way, $m_{d_2}\mid m_{[d_1,d_2]}$ and so $[m_{d_1},m_{d_2}]\mid m_{[d_1,d_2]}$.
		
		The sequence $\{x_n\}_{n\in \N}$ is a divisibility sequence and so $x_{m_{d_1}}\mid x_{[m_{d_1},m_{d_2}]}$. It follows that $d_1\mid x_{m_{d_1}}\mid  x_{[m_{d_1},m_{d_2}]}$ and, similarly,  $d_2\mid  x_{[m_{d_1},m_{d_2}]}$. Thus, $[d_1,d_2]\mid x_{[m_{d_1},m_{d_2}]}$ and
		Lemma \ref{lemma:mddivniff} yields $m_{[d_1,d_2]}\mid [m_{d_1},m_{d_2}]$.
			\end{proof}

We now turn to the estimation of the sum $M(\Pi)$ in 
\eqref{eq:Mz}, for suitable $\Pi\in \NN$. Our first  goal is to prove the lower bound in Theorem \ref{thm:lower}. For this we shall apply  a suitable result based on  inclusion--exclusion. 

	\begin{lemma}\label{lemma:incexc}
		Let $n\in \NN$ and  let $n_1,\dots, n_k$ be  divisors of $n$. 		
		Then
		$$
		\sum_{\substack{d\mid n\\d\nmid n_j, \forall j\leq k}}\mu(d)=\sum_{I\subseteq\{1,\dots k\}\cup \emptyset}(-1)^{\#I}\sum_{\substack{d\mid \gcd(n_i)_{i\in I}}}\mu(d),
		$$
		where we follow the convention that $\gcd(n_i)_{i\in I}=n$ when $I=\emptyset$.
	\end{lemma}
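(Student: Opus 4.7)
The plan is to derive the identity by a direct application of the standard inclusion--exclusion principle, applied termwise to each divisor $d$ of $n$. The key is to encode the constraint ``$d\nmid n_j$ for all $j\leq k$'' as a product of indicator functions and then expand.

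Concretely, for each $j\in\{1,\dots,k\}$, introduce the indicator $\mathbf{1}[d\mid n_j]$. The condition that $d$ divides none of the $n_j$ is equivalent to
$$
\prod_{j=1}^{k}\bigl(1-\mathbf{1}[d\mid n_j]\bigr)=\sum_{I\subseteq\{1,\dots,k\}}(-1)^{\#I}\prod_{i\in I}\mathbf{1}[d\mid n_i],
$$
where the empty product is understood to equal $1$. The key elementary observation is that $\prod_{i\in I}\mathbf{1}[d\mid n_i]=\mathbf{1}\bigl[d\mid \gcd(n_i)_{i\in I}\bigr]$, since $d$ divides each $n_i$ (for $i\in I$) if and only if it divides their greatest common divisor.

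Inserting this identity into the left-hand side and interchanging the two finite sums gives
$$
\sum_{\substack{d\mid n\\d\nmid n_j,\,\forall j\leq k}}\mu(d)=\sum_{I\subseteq\{1,\dots,k\}}(-1)^{\#I}\sum_{\substack{d\mid n\\ d\mid \gcd(n_i)_{i\in I}}}\mu(d).
$$
For $I\neq\emptyset$, each $n_i$ divides $n$, hence $\gcd(n_i)_{i\in I}\mid n$, and so the condition $d\mid n$ is redundant; the inner sum reduces to the sum over $d\mid \gcd(n_i)_{i\in I}$. For $I=\emptyset$, the empty product convention gives the constraint $d\mid n$, which matches the stated convention $\gcd(n_i)_{i\in\emptyset}=n$. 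This yields exactly the right-hand side.

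There is no real obstacle here: the only thing to be careful about is the bookkeeping at the endpoints, namely verifying that the convention $\gcd(n_i)_{i\in\emptyset}=n$ is consistent with the empty product of indicators being $1$, and that restricting $d\mid n$ may be dropped once $I\neq\emptyset$ because every $n_i$ divides $n$. Both are immediate, so the proof amounts to little more than writing out the inclusion--exclusion expansion.
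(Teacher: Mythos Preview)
Your proof is correct. You encode the constraint ``$d\nmid n_j$ for all $j$'' as the product $\prod_{j=1}^k(1-\mathbf{1}[d\mid n_j])$, expand, and use that $d$ divides every $n_i$ with $i\in I$ if and only if $d\mid\gcd(n_i)_{i\in I}$; the convention for $I=\emptyset$ and the redundancy of $d\mid n$ for $I\neq\emptyset$ are handled correctly.

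The paper instead argues by induction on $k$: it splits the sum according to whether $d\mid n_k$ or not, applies the inductive hypothesis to the set $\{n_1,\dots,n_{k-1}\}$ (respectively to $\{\gcd(n_1,n_k),\dots,\gcd(n_{k-1},n_k)\}$), and reassembles the subsets $I\subseteq\{1,\dots,k\}$ according to whether $k\in I$. Your argument is the more direct one: it unpacks the inclusion--exclusion in a single step and makes the combinatorial content transparent, whereas the inductive proof trades conceptual clarity for a mechanical recursion. Both approaches are entirely standard and of comparable length; yours has the slight advantage that the appearance of $\gcd(n_i)_{i\in I}$ is immediate rather than emerging from the inductive bookkeeping.
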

	\begin{proof}
		We proceed by induction on $k$. Assume $k=1$. Then 
		$$
		\sum_{\substack{d\mid n\\d\nmid n_1}}\mu(d)=\sum_{\substack{d\mid n}}\mu(d)-\sum_{\substack{d\mid n_1}}\mu(d),
		$$
		as required. We now  prove the lemma for $k>1$. Let $D=\{d\mid n: d\nmid n_j, \forall j\leq k-1\}$. Then we may write
		$$
		\sum_{\substack{d\mid n\\d\nmid n_j, \forall j\leq k}}\mu(d)=
		\sum_{\substack{d\in D\\ d\nmid n_k}}\mu(d)=
		\sum_{\substack{d\in D}}\mu(d)- 
		\sum_{\substack{d\in D\\ d\mid n_k}}\mu(d).
		$$		
		By induction,
		$$
		\sum_{\substack{d\in D}}\mu(d)=\sum_{I\subseteq\{1,\dots k-1\}\cup \emptyset}
		(-1)^{\#I}
		\sum_{\substack{d\mid \gcd(n_i)_{i\in I}}}\mu(d).
		$$
		Note that if $d\in D$ is such that $d\mid n_k$, then $d\mid n_k$ and $d\nmid \gcd(n_k,n_i)$ for each $1\leq i\leq k-1$. Hence, by induction, we have 
		$$
		\sum_{\substack{d\in D\\d\mid n_k}}\mu(d)=\sum_{\substack{d\mid n_k\\d\nmid \gcd(n_j,n_k),
				\forall j\leq k-1}}\mu(d)=
		\sum_{I\subseteq \{1,\dots k-1\}\cup \emptyset}(-1)^{\#I}\sum_{\substack{d\mid \gcd(n_i,n_k)_{i\in I}}}\mu(d).
		$$
		The statement of the lemma follows. 
	\end{proof}
	
	Next, the following key result allows us to express $M(\Pi)$ as a sum of non-negative terms. 
	
	\begin{lemma}\label{lemma:summd}
		Let $\Pi\in \NN$ be square-free and such that $m_\Pi<\infty$. Then
		$$
M(\Pi)=\sum_{\substack{j\mid m_\Pi \\\gcd(x_j,\Pi)=1}}\frac{\varphi(m_\Pi/j)}{m_\Pi},
		$$
		where $\varphi(\cdot)$ is the Euler $\varphi$-function.
	\end{lemma}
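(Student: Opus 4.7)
The plan is to interpret $m_\Pi M(\Pi)$ as counting, within one period of length $m_\Pi$, the indices $n$ for which $\gcd(x_n,\Pi)=1$, and then to partition these indices by the value of $j=\gcd(n,m_\Pi)$.

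First, using Lemma~\ref{lemma:mdivisibility}, I would note that $m_d\mid m_\Pi$ for every $d\mid\Pi$, so $m_\Pi/m_d$ literally counts the multiples of $m_d$ in $\{1,\dots,m_\Pi\}$; by Lemma~\ref{lemma:mddivniff} this equals $\#\{n\in\{1,\dots,m_\Pi\}:d\mid x_n\}$. Multiplying \eqref{eq:Mz} through by $m_\Pi$, swapping the order of summation, and applying the Möbius identity $\sum_{d\mid k}\mu(d)=\mathbf{1}[k=1]$, I would obtain
$$
m_\Pi\,M(\Pi)=\sum_{n=1}^{m_\Pi}\sum_{d\mid\gcd(x_n,\Pi)}\mu(d)=\#\{n\in\{1,\dots,m_\Pi\}:\gcd(x_n,\Pi)=1\}.
$$

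Next, for $n\in\{1,\dots,m_\Pi\}$, setting $j:=\gcd(n,m_\Pi)$, I would verify that a prime $p\mid\Pi$ divides $\gcd(x_n,\Pi)$ iff $m_p\mid n$ by Lemma~\ref{lemma:mddivniff}; since $m_p\mid m_\Pi$, this is in turn equivalent to $m_p\mid j$, i.e.\ to $p\mid x_j$. Square-freeness of $\Pi$ is essential here, because it lets one read $\gcd(x_n,\Pi)$ off entirely from which prime divisors of $\Pi$ appear, with no need to track higher prime powers. Hence $\gcd(x_n,\Pi)=1$ iff $\gcd(x_j,\Pi)=1$. Grouping the $n$'s by $j$ and using the elementary identity $\#\{n\in\{1,\dots,m_\Pi\}:\gcd(n,m_\Pi)=j\}=\varphi(m_\Pi/j)$, the right-hand side above becomes $\sum_{j\mid m_\Pi,\;\gcd(x_j,\Pi)=1}\varphi(m_\Pi/j)$, and dividing through by $m_\Pi$ yields the claim.

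There is no serious obstacle here; the whole argument is an exercise in reindexing. The only mildly delicate point is the reduction $\gcd(x_n,\Pi)=\gcd(x_j,\Pi)$ for $j=\gcd(n,m_\Pi)$, which crucially uses both Lemma~\ref{lemma:mdivisibility} (so that $m_p\mid m_\Pi$ for $p\mid\Pi$) and the square-freeness hypothesis.
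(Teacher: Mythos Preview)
Your argument is correct and takes a genuinely different route from the paper's. The paper groups the sum $M(\Pi)$ according to the value $l=m_d$, then characterises the condition $m_d=l$ as $d\mid \gcd(x_l,\Pi)$ together with $d\nmid \gcd(x_{l/p},\Pi)$ for each prime $p\mid l$; this forces the use of an auxiliary inclusion--exclusion identity (Lemma~\ref{lemma:incexc}) and the SDS property $\gcd(x_{l/p_i})_{i\in I}=x_{l/\prod_{i\in I}p_i}$ before the final reindexing $l=jk$. Your approach instead interprets $m_\Pi M(\Pi)$ directly as the count $\#\{n\leq m_\Pi:\gcd(x_n,\Pi)=1\}$ and partitions by $j=\gcd(n,m_\Pi)$, reducing everything to the single observation that $\gcd(x_n,\Pi)=\gcd(x_j,\Pi)$ and the classical identity $\#\{n\leq m_\Pi:\gcd(n,m_\Pi)=j\}=\varphi(m_\Pi/j)$. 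Your route is shorter and more conceptual, bypassing Lemma~\ref{lemma:incexc} entirely; the paper's route, on the other hand, makes the intermediate quantity $\sum_{m_d=l,\,d\mid\Pi}\mu(d)$ explicit, which might be useful if one wanted finer information about the contribution of each level $l$.
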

	\begin{proof}
		Recall from Lemma \ref{lemma:mdivisibility} that $m_d\mid m_\Pi$ if $d\mid \Pi$. Thus
		$$
		M(\Pi)=
		\sum_{d\mid \Pi}\frac{\mu(d)}{m_d}=\sum_{l\mid m_\Pi}\frac{1}{l}
		\sum_{\substack{m_d=l\\d\mid \Pi}}\mu(d).
		$$
		Now  $m_d=l$ if and only if we have $d\mid x_l$, with  $d\nmid x_{l'}$ for each proper divisor $l'\mid l$. 
		Let $p_1,\dots p_k$ be the distinct prime divisors of $l$. It follows that $m_d=l$ if and only if $d\mid x_l$ and $d\nmid x_{l/p_i}$, for each $1\leq i\leq k$. 
		Hence
		$$
		\sum_{\substack{m_d=l\\d\mid \Pi}}\mu(d)=
		\sum_{\substack{d\mid \gcd(x_l,\Pi)\\d\nmid \gcd(x_{l/p},\Pi), \forall p\mid l}}\mu(d).
		$$
		We now seek to 
		Lemma \ref{lemma:incexc}, for which we 
		note that 
		$$
		\gcd(x_{l/p_i})_{i\in I}=x_{\gcd(l/p_i)_{i\in I}}=x_{l/\prod_{i\in I}p_i},
		$$ 
		since $\{x_n\}_{n\in \NN}$ is a SDS.
		Hence
		$$
		\sum_{\substack{m_d=l\\d\mid \Pi}}\mu(d)=
		\sum_{I\subseteq\{1,\dots k\}\cup \emptyset}(-1)^{\#I}\sum_{d\mid \gcd(x_{l/p_I},\Pi)}\mu(d),
		$$
		where $p_1,\dots p_k$ are the distinct prime divisors of $l$ and $p_I=\prod_{i\in I}p_i$. 
		The inner sum is $1$ if $\gcd(x_{l/p_I},\Pi)=1$, and $0$ otherwise. 
		It therefore follows that 
		$$
		\sum_{\substack{m_d=l\\d\mid \Pi}}\mu(d)=\sum_{\substack{k\mid l\\\gcd(x_{l/k},\Pi)=1}}\mu(k).
		$$
		Putting $l=jk$, we see that 
		$$
		M(\Pi)=
		\sum_{l\mid m_\Pi}\frac{1}{l}	
		\sum_{\substack{l=jk\\ \gcd(x_j,\Pi)=1}}
		\mu(k)
		=\sum_{\substack{j\mid m_\Pi \\\gcd(x_j,\Pi)=1}}\frac{1}{j}\sum_{k\mid m_\Pi/j}\frac{\mu(k)}{k}.
		$$
		By the properties of the Euler-$\varphi$ function, we have 
		$$
		M(\Pi)=\sum_{\substack{j\mid m_\Pi \\\gcd(x_j,\Pi)=1}}\frac{1}{j}\frac{\varphi(m_\Pi/j)}{m_\Pi/j}=\sum_{\substack{j\mid m_\Pi \\\gcd(x_j,\Pi)=1}}\frac{\varphi(m_\Pi/j)}{m_\Pi},
		$$
		as desired. 
	\end{proof}
	
When $x_1=1$ the first term in this sum is $\varphi(m_\Pi)/m_\Pi$ and we get a lower bound for $M(\Pi)$ by focusing on this term and  ignoring the terms associated to $j>1$.
(Note that for the trivial sequence  $x_n=n$ this  is actually sharp, since then
		$$
		M(\Pi)=	\sum_{d\mid \Pi}\frac{\mu(d)}{d}=\frac{\varphi(\Pi)}{\Pi},
		$$
		for any $\Pi\in \NN$.)
 The following result is concerned with lower bounding
 $\varphi(m_\Pi)/m_\Pi$, under an additional assumption,   for suitable $\Pi\in \NN$.
	
	\begin{lemma}\label{lemma:upboundmPi}
		Let $\{x_n\}_{n\in \NN}$ be a SDS with the property that there 
		exists $\alpha>0$
such that 			$
m_p<p^\alpha
$
for all but finitely many primes. Let $S$ be the finite set of primes $p$ such that $m_p=\infty$. Let $\Pi_{z,S}=\prod_{p<z, p\notin S} p$. Then 
		$$
		\frac{\varphi(m_{\Pi_{z,S}})}{m_{\Pi_{z,S}}} \gg 
	\frac{1}{\log z}.
		$$
	\end{lemma}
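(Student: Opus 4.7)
The plan is to pass from $m_{\Pi_{z,S}}$ to its prime factorization and exploit the hypothesis $m_p < p^\alpha$ to bound the size of those prime factors, after which a direct appeal to Mertens' theorem gives the result.

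First, I would use Lemma \ref{lemma:lcm} iteratively to write
$$m_{\Pi_{z,S}} = \operatorname{lcm}\bigl(m_p : p<z,\ p\notin S\bigr).$$
Hence every prime $q$ dividing $m_{\Pi_{z,S}}$ must divide $m_p$ for some $p<z$ with $p\notin S$, and in particular $q\leq m_p$.

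Next I would apply the hypothesis. Let $T$ be the finite set (disjoint from $S$) of primes for which $m_p\geq p^\alpha$; enlarge $\alpha$ if necessary so that $m_p<p^\alpha$ for $p\notin S\cup T$. For $p<z$ with $p\notin S\cup T$ we have $m_p<p^\alpha<z^\alpha$, while for the finitely many $p\in T$ the value $m_p$ is bounded by an absolute constant $C$. Consequently every prime factor $q$ of $m_{\Pi_{z,S}}$ satisfies $q\leq \max(z^\alpha, C)$, and for $z$ large enough this is just $q\leq z^\alpha$.

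Finally, since $\varphi(n)/n = \prod_{q\mid n}(1-1/q)$ is decreasing when we enlarge the set of prime divisors, I would bound
$$\frac{\varphi(m_{\Pi_{z,S}})}{m_{\Pi_{z,S}}} = \prod_{q\mid m_{\Pi_{z,S}}}\left(1-\frac{1}{q}\right) \geq \prod_{q\leq z^\alpha}\left(1-\frac{1}{q}\right) \gg \frac{1}{\log(z^\alpha)} \gg \frac{1}{\log z},$$
where the penultimate inequality is Mertens' third theorem. For the small values of $z$ not covered by the ``$z$ large enough'' argument above, the statement holds trivially by adjusting the implied constant, since only finitely many $z$ are involved.

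There is no real obstacle here: the whole argument is a short manipulation relying on the two inputs (the lcm formula from Lemma \ref{lemma:lcm}, and the $p^\alpha$ hypothesis) combined with Mertens' theorem. The one subtlety worth flagging explicitly is the handling of the exceptional primes in $T$, but since $T$ is finite this only affects the implied constant.
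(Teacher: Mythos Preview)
Your argument is correct. Both your proof and the paper's ultimately rest on Mertens' theorem, but the packaging differs. The paper bounds the \emph{size} of $m_{\Pi_{z,S}}$ via
\[
m_{\Pi_{z,S}} \leq \prod_{\substack{p<z\\ p\notin S}} m_p < \Pi_{z,S}^{\alpha},
\]
and then invokes the classical lower bound $\varphi(d)/d \gg 1/\log\log d$ together with $\log\log\Pi_{z,S}\leq \log z + O(1)$. You instead bound the \emph{prime factors} of $m_{\Pi_{z,S}}$ by $z^\alpha$ and apply Mertens' third theorem directly. Your route is marginally more elementary, since it bypasses the (Mertens-based) proof of the $\varphi(d)/d\gg 1/\log\log d$ inequality; it also extracts slightly sharper intermediate information (prime factors bounded by $z^\alpha$, rather than by $m_{\Pi_{z,S}}$ itself). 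In practice the two arguments are of the same length and strength.
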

	\begin{proof} On enlarging $\alpha$, we can assume that $m_p<p^\alpha$ for all $p\notin S$.
	Let us write $\Pi=\Pi_{z,S}$ to ease notation, noticing that 
$$
\log(\Pi)=\sum_{p\leq z}\log p+O(1)=\theta(z)+O(1),
$$ 
where $\theta(z)$ is the Chebyshev function. Thus $\log(\Pi)\sim z$ as $z\to \infty$, by the prime number theorem. 
			We have the familiar lower bound
	$$
	\varphi(d)\gg \frac{d}{\log\log d}.
	$$
	By Lemma \ref{lemma:lcm} and  the hypotheses of the lemma, it follows that 
	$$
	m_\Pi\leq \prod_{\substack{p\leq z\\p\notin S}} m_p<\prod_{\substack{p\leq z\\p\notin S}} p^\alpha=\Pi^\alpha.
	$$
	But then 
	$$
	\log\log m_\Pi\leq \log \alpha +\log\log \Pi\leq \log z +O(1).
	$$
	We have therefore proved that 
	$$
	\frac{\varphi(m_\Pi)}{m_\Pi}\gg \frac 1{\log\log m_\Pi} \gg \frac{1}{\log z},
	$$
	as required. 
	\end{proof}
	
	It turns out that the 
	 lower bound in 
	Lemma \ref{lemma:upboundmPi}
 is actually  sharp under a further assumption on the SDS. This is the object of the following  result.

	\begin{lemma}\label{lemma:lowboundmPi}
	Let $\{x_n\}_{n\in \NN}$ be a SDS. 
	Let $S$ be the finite set of primes $p$ such that $m_p=\infty$.
		Assume that there exists $\delta>0$ such that, for all but finitely many primes $q$, there exists a prime $p<q^\delta$ such that $q\mid m_p$. Then 
		$$
		\frac{\varphi(m_{\Pi_{z,S}})}{m_{\Pi_{z,S}}} \ll \frac{1}{\log z}.
		$$
	\end{lemma}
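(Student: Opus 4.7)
The plan is to show that, under the stated hypothesis, essentially every prime $q \leq z^{1/\delta}$ divides $m_{\Pi_{z,S}}$, and then to apply Mertens' theorem to the identity
$$\frac{\varphi(m)}{m}=\prod_{q\mid m}\left(1-\frac{1}{q}\right).$$

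Write $\Pi=\Pi_{z,S}$ for brevity. First, I fix a prime $q\leq z^{1/\delta}$ outside the finite exceptional set coming from the hypothesis. Then there is a prime $p<q^{\delta}\leq z$ with $q\mid m_p$. In particular $m_p$ is finite, so $p\notin S$, and therefore $p\mid \Pi$. By Lemma \ref{lemma:mdivisibility} (or by Lemma \ref{lemma:lcm}) we have $m_p\mid m_\Pi$, hence $q\mid m_\Pi$. Thus the set $Q$ of primes $q\leq z^{1/\delta}$ which divide $m_\Pi$ is co-finite inside the set of all primes $\leq z^{1/\delta}$.

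Next, I estimate the product. Since only primes dividing $m_\Pi$ contribute to the Euler product, and the factors $1-1/q$ all lie in $(0,1)$, I can drop those with $q>z^{1/\delta}$ to obtain
$$\frac{\varphi(m_\Pi)}{m_\Pi}=\prod_{q\mid m_\Pi}\left(1-\frac{1}{q}\right)\leq \prod_{q\in Q}\left(1-\frac{1}{q}\right)=\frac{\prod_{q\leq z^{1/\delta}}\left(1-\frac{1}{q}\right)}{\prod_{\substack{q\leq z^{1/\delta}\\ q\notin Q}}\left(1-\frac{1}{q}\right)}.$$
The denominator is a product over a fixed finite set of primes, hence bounded below by an absolute positive constant. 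By Mertens' theorem, the numerator is $\sim e^{-\gamma}/\log(z^{1/\delta})=e^{-\gamma}\delta/\log z$. Combining these gives the claimed bound $\varphi(m_\Pi)/m_\Pi\ll 1/\log z$.

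The only subtlety to check carefully is the bookkeeping of the two finite sets of exceptions: the finite set $S$, and the finitely many primes $q$ for which the hypothesis fails; both only perturb the product by an absolute constant, and neither affects the asymptotic $1/\log z$. Conceptually there is no hard step here: the hypothesis is precisely engineered so that the primes of $m_\Pi$ are dense enough below $z^{1/\delta}$ for Mertens to give a matching upper bound to Lemma \ref{lemma:upboundmPi}.
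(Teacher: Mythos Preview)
Your proof is correct and follows essentially the same route as the paper: show that all but finitely many primes $q\leq z^{1/\delta}$ divide $m_{\Pi_{z,S}}$ (via $q\mid m_p$ for some $p\mid \Pi_{z,S}$), and then apply Mertens' theorem to the Euler product for $\varphi(m_\Pi)/m_\Pi$. Your write-up is in fact slightly more careful than the paper's in checking that $p\notin S$ and in handling the finite exceptional set explicitly.
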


	\begin{proof}
Write $\Pi=\Pi_{z,S}$, as previously and note that 
		$$
		\frac{\varphi(m_\Pi)}{m_\Pi}=
		\prod_{\substack{q\mid m_\Pi}}
		\left(1-\frac{1}{q}\right).
		$$
		The product will only get larger if we reduce the number of factors in the product.  Now 
		$q\mid m_\Pi$ if and only if there exists $p\mid \Pi$ such that $q\mid m_p$.
		By hypothesis, we have $q\mid m_p$ for at least one $p\mid \Pi$ for $q\ll z^{1/\delta}$,  up to finitely many exceptions. Hence
		$$
		\frac{\varphi(m_\Pi)}{m_\Pi}=
		\prod_{\substack{q\mid m_\Pi}}
		\left(1-\frac{1}{q}\right)\ll
		\prod_{q \ll z^{1/\delta}}
		\left(1-\frac{1}{q}\right)
		\ll \frac{1}{\log z},
		$$
		by Mertens theorem. 
	\end{proof}
	
	\begin{remark}
		Both Lucas sequences of the first kind and CM elliptic divisibility sequences satisfy the assumption of Lemma \ref{lemma:lowboundmPi}, assuming GRH. For elliptic divisibility sequences  with CM, this will be proved in Proposition \ref{lemma:smallestp}. For Lucas sequences, 
as explained by Lenstra \cite[Lemma 2.5]{Lenstra}, 
	we have 	
		 $q\mid m_p$ if and only if the Frobenius of $p$ belongs to certain conjugacy classes in a field depending only on $q$. But then  one concludes using an effective form of the  Chebotarev density theorem \cite{effche}.
	\end{remark}	
	
	\begin{proof}[Proof of Theorem \ref{thm:lower}]
		Let $S$ be the finite set of primes as in Lemma \ref{lemma:upboundmPi}. If $d\mid \Pi_z$ and $d\nmid \Pi_{z,S}$, then there exists $p\in S$ such that $p\mid d$. So, $m_d=\infty$. Hence, on recalling the definition  \eqref{eq:Mz}, we have
		$M(\Pi_z)=M(\Pi_{z,S})$.
		Lemma \ref{lemma:summd} implies that 
$$
M(\Pi_{z,S})\geq \frac{\varphi(m_{\Pi_{z,S}})}{m_{\Pi_{z,S}}},
$$
since $x_1=1$. 
		But then 
Lemma  \ref{lemma:upboundmPi} yields the desired lower bound. 
	\end{proof}

		\section{Eratosthenes--Legendre sieve and primes in a SDS}\label{sec:E-L}
Let $\{x_{n}\}_{n\in \NN}$ be a SDS satisfying the assumptions of Theorem \ref{thm:gen}. 
	Let $\p$ be  a set of rational primes, and let $z$ and $N$ be two positive parameters. 
We define
$$
	A(N,z)=\left\{n\leq N :\gcd(x_n,\Pi)=1\right\},
$$
where
\begin{equation}\label{eq:Pi}
\Pi=\Pi(z,\p)=\prod_{\substack{p\in \p\\p\leq z}}p.
\end{equation}
We can now prove the following result.

	\begin{lemma}\label{lemma:EL}
Let   $M(\Pi)$ be given by \eqref{eq:Mz}. Then 
		$$
		\#A(N,z)=NM(\Pi)+O(2^{\omega(\Pi)}).
		$$
		\end{lemma}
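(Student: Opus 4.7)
The plan is to apply the classical Eratosthenes--Legendre sieve identity, converting the coprimality condition $\gcd(x_n,\Pi)=1$ into a sum of M\"obius values, and then to swap the order of summation to reduce to the local density estimate \eqref{eq:LOD}. Since $\Pi$ is square-free by construction (see \eqref{eq:Pi}), every divisor of $\gcd(x_n,\Pi)$ is also square-free, so the Möbius identity gives
$$
\1[\gcd(x_n,\Pi)=1]=\sum_{d\mid \gcd(x_n,\Pi)}\mu(d)=\sum_{\substack{d\mid \Pi\\ d\mid x_n}}\mu(d).
$$
Summing over $n\leq N$ and swapping the two sums yields
$$
\#A(N,z)=\sum_{d\mid \Pi}\mu(d)\,\#\{n\leq N: d\mid x_n\}.
$$

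Next, I would apply Lemma \ref{lemma:mddivniff} (equivalently \eqref{eq:LOD} via Remark \ref{rem:LOD}), which tells us that $d\mid x_n$ if and only if $m_d\mid n$, hence
$$
\#\{n\leq N: d\mid x_n\}=\left\lfloor\frac{N}{m_d}\right\rfloor=\frac{N}{m_d}+O(1),
$$
with the convention that the left-hand side is $0$ (and $N/m_d$ is counted as $0$) when $m_d=\infty$. Substituting this back gives
$$
\#A(N,z)=N\sum_{d\mid \Pi}\frac{\mu(d)}{m_d}+O\!\left(\sum_{d\mid \Pi}|\mu(d)|\right)=NM(\Pi)+O(2^{\omega(\Pi)}),
$$
using the definition \eqref{eq:Mz} of $M(\Pi)$ and the fact that the number of square-free divisors of $\Pi$ is exactly $2^{\omega(\Pi)}$.

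There is no real obstacle here; the argument is purely formal once Lemma \ref{lemma:mddivniff} is in place. The only point requiring a moment of care is the bookkeeping when $m_d=\infty$, which is handled by the convention already introduced after \eqref{eq:Mz}: those divisors simply contribute $0$ to both the main term and, since they also produce an empty counting set, to the error term.
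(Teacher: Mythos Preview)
Your proof is correct and follows essentially the same approach as the paper: inclusion--exclusion via the M\"obius function, followed by the local density estimate \eqref{eq:LOD}, with the $O(1)$ errors accumulating to $O(2^{\omega(\Pi)})$. The paper's version is slightly terser but the argument is identical.
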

	
	\begin{proof}
		It follows from inclusion--exclusion that 
		$$
		\#A(N,z)=\sum_{d\mid \Pi} \mu(d)\#A_d ,
		$$
		where $	A_d=\{n\leq N :d \mid x_n\}$.
On appealing to \eqref{eq:LOD}, we deduce that 
		$$
		\#A(N,z)=N\sum_{d\mid \Pi} \frac{\mu(d)}{m_d} +O(2^{\omega(\Pi)}),
		$$
as required. 
	\end{proof}
	
\begin{proof}[Proof of  Theorem \ref{thm:gen}]
In this result we take $\mathcal{P}$ to be the set of all rational primes. 
We have  $\omega(\Pi)\leq z/\log z$ in Lemma \ref{lemma:EL}, by the prime number theorem.
Taking $z= \frac{1}{2}(\log N)(\log\log N)$, we note that $2^{z/\log z}\leq \sqrt{N}$ for sufficiently large values of $N$. Hence 
		it follows from Lemma~\ref{lemma:EL} that
$$
		\#A(N,z)-NM(\Pi)\ll \sqrt{N}.
$$
 Theorem \ref{thm:lower} now yields the statement of the theorem.
  \end{proof}
  
  We next proceed by searching for an upper bound for $\#A(N,z)$.
  
   \begin{definition}
  	Let $\mathcal{F}$ be the family of  strictly increasing continuous functions 
	$$
	f:\R_{\geq 1}\to \R$$ 
	such that $\lim_{z\to \infty} f(z)=\infty$.
  \end{definition}
  
  Bearing this definition in mind, we may establish the following result. 
  
  \begin{lemma}\label{lemma:deff}
  	Let $\{x_n\}_{n\in \NN}$ be a SDS such that, for all but finitely many $n\in \NN$, $x_n$ has a primitive prime divisor. Then there exists $f\in\mathcal{F}$ such that
$$
  		n\leq f(z) \implies x_{n}\leq z ,
$$
	for all $z\geq 1$.
  \end{lemma}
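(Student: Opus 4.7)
The plan is to prove the lemma in two steps: first, establish that $x_n\to\infty$ as $n\to\infty$, and then construct $f$ explicitly by piecewise-linear interpolation.

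For the growth: by hypothesis there is some $N_0$ beyond which every $x_n$ carries a primitive prime divisor $p_n$, which by definition satisfies $m_{p_n}=n$. In particular, the primes $\{p_n\}_{n\geq N_0}$ are pairwise distinct (if $p_n=p_m$ with $m<n$, then $p_n\mid x_m$ would contradict primitivity at index $n$); only finitely many primes can lie below any bound $M$, and since $x_n\geq p_n$ this gives $x_n\to\infty$. Consequently the record values $M_k:=\max_{n\leq k}x_n$ also tend to infinity, and the indices $1=k_1<k_2<\cdots$ at which $M_k$ strictly increases form an infinite sequence satisfying $M_{k_i}=x_{k_i}$ and $M_{k_1}=x_1=1$.

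I then define $f\colon [1,\infty)\to\R$ by linear interpolation on each interval $[M_{k_i},M_{k_{i+1}}]$:
\begin{equation*}
f(z)=k_i+(k_{i+1}-k_i)\cdot\frac{z-M_{k_i}}{M_{k_{i+1}}-M_{k_i}}.
\end{equation*}
The pieces agree at shared endpoints, so $f$ is continuous on $[1,\infty)$; each is strictly increasing since $k_{i+1}>k_i$; and $f(z)\to\infty$ as $z\to\infty$ because $k_i\to\infty$. Thus $f\in\mathcal{F}$. For the implication: if $z\in[M_{k_i},M_{k_{i+1}}]$ and $n\leq f(z)$, then $n\leq k_{i+1}$. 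If $n<k_{i+1}$, then by definition of the breakpoints $x_n\leq M_{k_i}\leq z$; if $n=k_{i+1}$, then $f(z)=k_{i+1}$, which by strict linearity forces $z=M_{k_{i+1}}=x_{k_{i+1}}=x_n$, so again $x_n\leq z$.

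The main conceptual obstacle is the growth step, since the primitive prime divisor hypothesis does not directly bound $x_n$ from below; the key input is that primitive prime divisors must be distinct across indices, which together with a pigeonhole argument forces $x_n\to\infty$. Everything else is routine interpolation, with the only subtlety being the endpoint case $n=k_{i+1}$, handled by the strict linearity of $f$ on each piece.
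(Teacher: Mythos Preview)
Your proof is correct and follows essentially the same approach as the paper: both arguments first show $x_n\to\infty$ via the primitive prime divisor hypothesis, form the running maximum $M_k=\max_{i\leq k}x_i$, and then build $f$ from this increasing sequence. The paper is slightly less explicit, defining the step function $f_1(z)=\max\{n:M_n\leq z\}$ and then invoking the existence of some strictly increasing continuous $f\leq f_1$, whereas you give a concrete piecewise-linear interpolant and verify the implication directly (including the endpoint case); your version is marginally more constructive but the underlying idea is the same.
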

  \begin{proof}
  	Firstly, we notice that 
	 ${x_n}$ cannot admit a constant subsequence, since 
there is a primitive prime divisor for all but finitely many terms. 
Hence	 $\lim_{n\to\infty}x_n=\infty$ and we may form an 
unbounded increasing sequence $y_n=\max_{i\leq n} \{x_i\}$. Let $f_1:\R_{\geq 1}\to \R_{\geq 1}$ be such that $f_1(z)=\max\{n\geq 1: z\geq y_n\}$. Hence, $f_1$ is  an increasing step-function that goes to infinity. Fix $z\geq 1$ and let $n\leq f_1(z)=m$. But then $y_m\leq z$ and $x_n\leq y_m\leq z$. Let $f$ be a strictly increasing continuous function that goes to infinity such that $f(z)\leq f_1(z)$ for all $z\geq 1$. Then, if $n\leq f(z)$, we will have $n\leq f_1(z)$ and so  $x_{n}\leq z $.
  \end{proof}
  \begin{proof}[Proof of Theorem \ref{thm:upperf}]
  	Let $C$ be such that, for all $n>C$, $x_n$ has a primitive prime divisor.
  	Let $n\geq 1$ be divisible by a prime $p$ with $C<p<f(z)$. Let $q$ be a primitive prime divisor of $x_p$. Thus
  	$$
  	q\mid  x_p\leq z
  	$$
  	since $p\leq f(z)$. Hence, $q\mid x_p\mid x_n$ and then $n\notin A(N,z)$.
  	It therefore follows that
  	$$
  	A(N,z)\subseteq\{n\leq N : p\mid n\implies p\leq C \text{ or } p\geq f(z)\}.
  	$$
  	But then an application of the Selberg sieve \cite[Theorem 7.1.1 and Corollary 7.1.2]{cojomurty} yields
  	$$
  	\#\{n\leq N : p\mid n\implies p\leq C \text{ or } p\geq f(z)\}\ll \frac{N}{\log f(z)}+f(z)^2,
  	$$
	which gives our desired upper bound for $\#A(N,z)$.
  \end{proof}
  
    \begin{corollary}\label{cor:upperbigprime}
  	Let $\{x_n\}_{n\geq 1}$ be a SDS such that, for all but finitely many terms, $x_n$ has a primitive prime divisor and let $f$ be as in Lemma \ref{lemma:deff}. Let $g:\R_{\geq f(1)}\to \R$ be such that $f\circ g=\operatorname{Id}$ (which exists since $f$ is continuous and strictly increasing).
  	Then
  	$$
  	\#\left\{n\leq N : p\mid x_n\implies p>g\left(N^{1/3}\right)\right\}\ll \frac{N}{\log N}.
  	$$
  \end{corollary}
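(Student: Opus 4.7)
The plan is to apply Theorem~\ref{thm:upperf} directly, with the sieve parameter $z$ chosen so that $f(z)$ is a small power of $N$. The statement of the corollary already suggests the choice $z = g(N^{1/3})$, for which
$$
f(z) = (f\circ g)(N^{1/3}) = N^{1/3}
$$
by the defining property of $g$. I would first check the mild feasibility conditions: provided $N \geq f(1)^3$ (a fixed constant, so an eventual condition on $N$), the argument $N^{1/3}$ lies in the domain of $g$, and certainly $f(z) > 1$, so Theorem~\ref{thm:upperf} is applicable with this $z$.

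Substituting this choice of $z$ into the conclusion of Theorem~\ref{thm:upperf} then yields
$$
\#\left\{n\leq N : p\mid x_n \implies p > g(N^{1/3})\right\} \ll \frac{N}{\log f(z)} + f(z)^2 = \frac{N}{\log N^{1/3}} + N^{2/3} = \frac{3N}{\log N} + N^{2/3}.
$$
Since $N^{2/3} = o(N/\log N)$, the quadratic error term is absorbed into the first summand and the desired bound $\ll N/\log N$ follows.

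The exponent $1/3$ in $g(N^{1/3})$ is tailored to balance the two contributions in Theorem~\ref{thm:upperf}: the error $f(z)^2 = N^{2/3}$ must be dominated by $N/\log N$, which forces the exponent to lie strictly below $1/2$, with $1/3$ being a convenient round choice. There is no genuine obstacle here; the corollary amounts to a direct specialisation of Theorem~\ref{thm:upperf} via the functional inverse of $f$, and the only thing that truly needs verification beyond the plug-in is the obvious fact that $g(N^{1/3})$ is defined for $N$ large enough.
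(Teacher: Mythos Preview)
Your proposal is correct and follows essentially the same approach as the paper: set $z=g(N^{1/3})$, compute $f(z)=N^{1/3}$, and plug into Theorem~\ref{thm:upperf}. The paper's only cosmetic difference is that it bounds the error via the inequality $\log(f(z))f(z)^2\leq f(z)^3=N$ rather than by directly observing $N^{2/3}=o(N/\log N)$.
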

  \begin{proof}
  	Fix $N$ and let $z=g(N^{1/3})$. Then,
  	$$
  	\log(f(z))f(z)^2\leq f(z)^{3}=f(g(N^{1/3}))^{3}=(N^{1/3})^{3}= N.
  	$$
	It follows that 
  	$$
  	f(z)^2\leq \frac{N}{\log f(z)},
  	$$
whence
  	$$
  	\#\left\{n\leq N: p\mid x_n\implies p>z\right\}\ll \frac{N}{\log f(z)}+f(z)^2\ll  \frac{N}{\log f(z)}\ll \frac{N}{\log N},
  	$$
by Theorem \ref{thm:upperf}.
  \end{proof}
  
  In Corollary \ref{cor:fib} we applied 
  Theorem \ref{thm:upperf} to get an upper bound for $\#A(N,z)$ for the sequence $\{F_n\}_{n\in \NN}$ of Fibonacci numbers, with 
   $z=\tfrac{1}{2}(\log N)(\log\log N)$. 
In doing so we used the observation that 
$F_n\leq z$ if $n\leq f(z)$, 
with the choice $f(z)=\log_\phi(\sqrt{5}z-1)$.
Note that $f\circ g=\operatorname{Id}$, 
with  $g(z)=\frac{1}{\sqrt{5}}(\phi^z+1)$. But then  Corollary \ref{cor:upperbigprime} implies that
$$
  	\#\left\{n\leq N :
  	p\mid F_n \implies  p> \tfrac{1}{\sqrt{5}}\phi^{N^{1/3}}\right\}\ll \frac{N}{\log N}.
$$
It is interesting to compare this result with  Lemma \ref{lemma:EL}. 
For $z$ of order $\phi^{N^{1/3}}$, the  error term 
$O(2^{\omega(\Pi_z)})$ is 
much larger than $N$, rendering the estimate  meaningless. 

\begin{remark}\label{rem:rat}
We can clearly prove an analogue of Corollary \ref{cor:fib} for the sequence 
$x_n=2^n-1$ of Mersenne numbers, using instead the function $f(z)=\log_2(z+1)$.
In fact, we claim that 
$$
M(\Pi_z)\ll \frac{1}{\log\log z},
$$
for this sequence, where $\Pi_z=\prod_{p\leq z}p$.
To see this we  combine  
Lemma \ref{lemma:EL}
with Theorem~\ref{thm:upperf}, in order to deduce that 
\begin{align*}
NM(\Pi_z)
&\ll  \#A(N,z)+2^{z/\log z}\\
&\ll  \frac{N}{\log f(z)} +f(z)^2+2^{z/\log z}\\
&\ll  \frac{N}{\log \log z} +\log^2 z+2^{z/\log z}.
\end{align*}
The claim follows on dividing through by $N$ and taking the limit as $N\to \infty$.
\end{remark}	

We conclude this section by proving Theorem \ref{thm:cardprimes},
which gives a mild condition under which the set of primes in a  SDS has density $0$.

\begin{proof}[Proof of Theorem \ref{thm:cardprimes}]
Let $\{x_n\}_{n\in \NN}$ be a SDS. 
Let  $\mathcal{A}$ be the set of indexes $n$ such that $x_n=1$ and assume that $\mathcal{A}$ has density $0$. 
	Assume that $x_n=q$ is a prime and write $n=\prod_{i\leq u} p_i^{a_i}$. For all $i\leq u$, let $n_i=n/p_i^{a_i}$ and notice that $x_{n_i}$ is equal to $1$ or $q$ (since it must be a divisor of $q$). Moreover, $\gcd_{i\leq u}(n_i)=1$. If, for all $i\leq u$ we have $x_{n_i}=q$, then $$x_1=x_{\gcd_{i\leq u}(n_i)}=\gcd_{i\leq u}(x_{{n_i}})=q,$$ 
	which is not true. Thus, there exists $i\leq u$ such that $x_{n_i}=1$ and then $n_i\in \mathcal{A}$. Therefore, $n$ can be written as the product of a prime power and an element in $\mathcal{A}$. Let $\p^\infty$ be the set of prime powers and $\p^\infty \cdot \mathcal{A}=\{n\in \N:n=rx, ~r\in \p^\infty, ~x\in \mathcal{A} \}$. In conclusion, we have shown that
	$$
	\{n\in \N:  x_n \text{ is prime }\}\subseteq \p^\infty \cdot \mathcal{A}.
	$$
	Note that $\mathcal{A}$ is divisor closed, meaning that 
	$d\in \mathcal{A}$ whenever 
		  $n\in \mathcal{A}$ and $d\mid n$. This follows, since $x_d\mid x_n=1$.
	We conclude by 
	applying Theorem \ref{mth}, which  shows that if $\mathcal{A}$ has density $0$ and is divisor closed, then $\p^\infty \cdot \mathcal{A}$ has density $0$.
\end{proof}

\begin{remark}\label{rem:necX}
	Notice that the assumption that $\mathcal{A}$ has density $0$ cannot be removed. Indeed, let $\{x_n\}_{n\in \NN}$ be the sequence defined by 
$$
x_n=\begin{cases}
2 &\text{ if $2\mid n$,}\\
1 &\text{ if $2\nmid n$.}
\end{cases}
$$
This is a SDS and we see that $\mathcal{A}$ has density $1/2$. Moreover,  $x_n$ is prime for a positive density set of indices.
\end{remark}
	
	\section{Elliptic divisibility sequences}\label{sec:eds}

	The goal of this section is to prove some properties of elliptic divisibility sequences, and in particular that they satisfy the hypotheses of Theorem \ref{thm:gen}.
	Let $E$ be a rational elliptic curve defined by a Weierstrass equation with integer coefficients. Let $P\in E(\Q)$ be a non-torsion point. For each $n\in \N$, define
	$$
	nP=(x_n:y_n:z_n)\in \mathbb{P}^2(\Q)
	$$
	with $x_n,y_n,z_n\in \ZZ$ such that 
	$\gcd(x_n,y_n,z_n)=1$. If we add the hypothesis $z_n>0$, then the choice of $(x_n:y_n:z_n)$ is unique.  Since the 
	Weierstrass equation defining the curve has integer coefficients, so it follows that
	$x_n=a_nb_n$  and $z_n=b_n^3$, for $a_n,b_n\in \ZZ$. Indeed, if $p^k\| z_n$ then  $p^k\mid x_n^3$, by the equation defining the curve. But then $\gcd(p,y_n)=1$ and $p^k\|x_n^3$.  Hence, $z_n$ is a cube and  $p^j\| x_n$ if $p^{3j}\|z_n$
	The sequence $\{b_n\}_{n\in \NN}$ is an {\em elliptic divisibility sequence} (EDS). Moreover, 
	possibly after a change of variables, we can henceforth assume that $b_1=1$. 
As explained by Silverman \cite[Section 2]{silverman}, it follows from 
 properties of the formal group of an elliptic curve that the 
 sequence $\{b_n\}_{n\in \NN}$ is a SDS.

		Let $p$ be a prime. Note that $m_p$, as given by Definition \ref{def:md}, is the smallest positive integer such that $m_pP$ is the identity 
element $(0:1:0)\in E(\F_p)$ on reducing modulo $p$.  Furthermore, we note that $p\mid b_n$ if and only if $nP$ reduces to the identity modulo $p$.
The next result shows that an  EDS satisfies the hypothesis  in Theorem \ref{thm:gen}.

	\begin{lemma}\label{lemma:bound}
		If $p$ is a prime of good reduction for $E$, then $m_p\mid \#E(\F_p)$. In particular, $m_p\leq p+1+2\sqrt{p}$. 
		If $p$ is not a prime of good reduction for $E$, then there exists a constant $C_E$ depending only on $E$ such that $m_p\leq C_Ep$.
	\end{lemma}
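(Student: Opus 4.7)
The plan is to interpret $m_p$ group-theoretically as the order of the image of $P$ under a suitable reduction map, and then bound that order using Hasse's theorem in the good reduction case and a uniform component-group argument in the bad reduction case.

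For a prime $p$ of good reduction, I would first observe that, because the Weierstrass model is smooth at $p$, the identity $(0:1:0) \in E(\F_p)$ is the only point with $z$-coordinate divisible by $p$. Hence the condition $p \mid b_n$ (equivalently $p \mid z_n = b_n^3$) is exactly the condition that $nP$ reduces to the identity in $E(\F_p)$. Since reduction modulo $p$ is a group homomorphism $E(\Q) \to E(\F_p)$, the integer $m_p$ is precisely the order of the reduced point $\widetilde{P}$ in $E(\F_p)$. By Lagrange, $m_p \mid \#E(\F_p)$, and the Hasse bound $\#E(\F_p) \leq p + 1 + 2\sqrt{p}$ then completes this case.

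For the finitely many primes $p$ of bad reduction, I would work over $\Q_p$ with the standard filtration
$$
E_1(\Q_p) \subseteq E_0(\Q_p) \subseteq E(\Q_p),
$$
where $E_1$ is the kernel of reduction (the formal group) and $E_0$ is the subgroup of points with non-singular reduction. The quotient $E_0(\Q_p)/E_1(\Q_p)$ identifies with $\widetilde{E}_{\mathrm{ns}}(\F_p)$, whose cardinality is $p-1$, $p$, or $p+1$ according to the Kodaira type, and $E(\Q_p)/E_0(\Q_p)$ is the component group of order equal to the Tamagawa number $c_p$. The key identification is again $p \mid b_n \iff nP \in E_1(\Q_p)$, so $m_p$ equals the order of the image of $P$ in $E(\Q_p)/E_1(\Q_p)$. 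This gives
$$
m_p \leq c_p \cdot \#\widetilde{E}_{\mathrm{ns}}(\F_p) \leq c_p(p+1).
$$
Since there are only finitely many primes of bad reduction and each has a finite Tamagawa number, setting $C_E = \max_{p \text{ bad}} 2c_p$ (or any sufficiently large constant depending only on $E$) yields $m_p \leq C_E p$.

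The only mild obstacle is justifying that ``$p\mid b_n$ iff $nP$ lies in the kernel of reduction'' at primes of bad reduction; this is not quite automatic because the Weierstrass model is singular at such $p$, but it follows from the formal group description of $E_1(\Q_p)$ together with the normalisation $b_1 = 1$ and the identification $z_n = b_n^3$ established earlier in the section. Once this identification is in place, the two bounds drop out immediately from the filtration above and the Hasse--Weil estimate.
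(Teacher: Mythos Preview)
Your proof is correct and follows essentially the same route as the paper's: both interpret $m_p$ as the order of the reduction of $P$ and apply Lagrange plus Hasse in the good reduction case, and both handle bad reduction via the filtration $E_1\subseteq E_0\subseteq E$ together with the finiteness of the component group. The only cosmetic difference is that the paper cites \cite[Corollary~C.15.2.1]{arithmetic} to bound the index $[E(\Q_p):E_0(\Q_p)]$, whereas you name this index as the Tamagawa number $c_p$ and take the maximum over the bad primes explicitly.
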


	\begin{proof}
		Assume that $p$ is a prime of good reduction. Then the order of $P$ in $E(\F_p)$ divides the cardinality of the group $E(\F_p)$. But, by definition, $m_p$ is the order of $P$ in $E(\F_p)$. Thus 
		$$
		m_p\mid  \#E(\F_p)\leq p+2\sqrt p+1,
		$$
		by the Hasse bound. 
		
		Assume that $p$ is not a prime of good reduction. 
Then it follows from	\cite[Corollary~C.15.2.1]{arithmetic} that there exists a constant $C'$, depending only on $E$, such that $mP$ is non-singular in $E(\F_p)$ for at least one $m<C'$. The group of non-singular points in $E(\F_p)$ has order at most $2p+1$. We conclude as before.
	\end{proof}
	
	Building on this, 
	we can prove  general upper and lower  bounds  for $m_d$, for any $d\in \NN$. 
		While not directly used in our work, 
the following result shows that $m_d$ can vary quite widely in size. 

	\begin{lemma}\label{lemma:sizes}
		There exists constants $C_1,C_2>0$, depending only on $E$ and $P$, such that
$$
C_1\sqrt{\log d}\leq m_d\leq C_2^{\omega(d)}d,
$$
for any  square-free $d\in \NN$.
	\end{lemma}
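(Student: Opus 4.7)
The plan is to establish the two inequalities by quite different arguments. The upper bound is combinatorial, relying on the LCM decomposition from Lemma \ref{lemma:lcm} together with the prime-by-prime bound from Lemma \ref{lemma:bound}. The lower bound instead uses the divisibility $d\mid b_{m_d}$ combined with a growth estimate on $b_n$ coming from the theory of canonical heights on $E$.

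For the upper bound, I would write $d=p_1\cdots p_k$ with $k=\omega(d)$. Iterating Lemma \ref{lemma:lcm} gives
\[
m_d=[m_{p_1},m_{p_2},\ldots,m_{p_k}]\leq m_{p_1}m_{p_2}\cdots m_{p_k}.
\]
Lemma \ref{lemma:bound} handles each $m_{p_i}$: for primes of good reduction $m_p\leq p+2\sqrt{p}+1\leq 4p$, while for the finitely many bad primes one has $m_p\leq C_E p$. Taking $C_2=\max(4,C_E)$ yields $m_p\leq C_2 p$ uniformly in $p$, and multiplying through produces $m_d\leq C_2^{\omega(d)}d$.

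For the lower bound, since $d\mid b_{m_d}$ we have $b_{m_d}\geq d$, so it suffices to show $\log b_n \ll_{E,P} n^2$. Recall that $nP=(a_nb_n:y_n:b_n^3)$ in coprime projective coordinates, so the naive height on $\PP^2$ satisfies $h(nP)\geq \log b_n^3=3\log b_n$. Combining this with Weil's comparison $h(Q)=\hat{h}(Q)+O_E(1)$ and the quadratic scaling $\hat{h}(nP)=n^2\hat{h}(P)$ of the canonical height on $E$, we obtain
\[
3\log b_n \leq h(nP)=n^2\hat{h}(P)+O_E(1),
\]
so $\log b_n\leq C n^2$ for a constant $C$ depending only on $E$ and $P$. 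Applying this at $n=m_d$ gives $\log d\leq C m_d^2$, which rearranges to $m_d\geq C_1\sqrt{\log d}$ for a suitable $C_1>0$.

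The only genuine obstacle is quoting the right height-theoretic input: unlike Lucas sequences (linear exponential), elliptic divisibility sequences grow quadratic-exponentially, and this rate must be extracted from Weil's height machinery. Everything else is elementary. The constants $C_1,C_2$ can be shrunk/enlarged to absorb the finitely many small values of $d$ where the asymptotic inequalities are non-trivial to verify directly (noting also that $m_d\geq 2$ for all squarefree $d>1$, since $x_1=1$).
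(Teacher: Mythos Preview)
Your proof is correct and follows essentially the same route as the paper: the upper bound via Lemma~\ref{lemma:lcm} and Lemma~\ref{lemma:bound} is identical, and for the lower bound both you and the paper use $d\mid b_{m_d}$ together with the height-theoretic estimate $\log b_n\ll_{E,P} n^2$. The only cosmetic difference is that the paper quotes Silverman's two-sided inequality \eqref{eq:silver} (which invokes Siegel's theorem), whereas you derive the one inequality actually needed here directly from the comparison $h=\hat h+O_E(1)$; your route is in fact slightly cleaner, since Siegel is only required for the other direction of \eqref{eq:silver}, which plays no role in this lemma.
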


\begin{proof}
We begin by proving the lower bound. 
Let $\ve>0$.
As explained by Silverman \cite[Lemma 8]{silverman},  Siegel's theorem implies that 
		\begin{equation}\label{eq:silver}
		(1-\ve)n^2 \hat{h}(P)+O_{\ve,E}(1)\leq \log b_n\leq (1+\ve)n^2 \hat{h}(P)+O_{\ve,E}(1),
		\end{equation}
		where $\hat h(P)>0$ is the 
		canonical height of the point $P$.
We know $d\mid b_{m_d}$ and so it follows that  $\log d\leq \log b_{m_d}$, whence
		$\log d\leq  (1+\eps)m_d^2\hat{h}(P)+O_{\ve,E}(1)$.
The lower bound easily follows.

Turning to the upper bound, let $d=p_1\dots p_r$ and let 
$\Delta_E$ denote the discriminant of $E$. 
It follows from Lemma \ref{lemma:lcm} that  $m_d\leq m_{p_1}\dots m_{p_r}$. 
Applying Lemma \ref{lemma:bound}, we deduce that 
		$$
		m_p\leq 
		\begin{cases}
		p(1+2p^{-1/2}+p^{-1}) & \text{ if $p\nmid \Delta_E$},\\
		C_Ep & \text{ if $p\mid \Delta_E$},
		\end{cases}
		$$
		for any prime $p$. 
But then it follows that 
		$$
		m_d\leq d\prod_{\substack{p\mid d\\p\nmid \Delta_E}}(1+2p^{-1/2}+p^{-1})\prod_{\substack{p\mid d\\p\mid \Delta_E}} C_E\leq C_2^{\omega(d)}d,
		$$
		for a suitable constant $C_2>0$.
\end{proof}
	
	Note that it follows from \eqref{eq:silver} that the EDS $\{b_n\}_{n\in \NN}$ contains only $O(\sqrt{\log N})$ elements in the interval $[1,N]$.

	\section{A Chebotarev argument for EDS} \label{sec:cheb}
	
		In Lemma \ref{lemma:upboundmPi} we proved a lower bound for $\varphi(m_\Pi)/m_\Pi$
	and in 
Lemma \ref{lemma:lowboundmPi} we proved a matching upper bound, subject to some additional hypotheses on the SDS.	
		 The goal of this section is to check that the hypotheses hold for EDS, 
		 associated to a
		 rational elliptic curve $E$ and a non-torsion point $P\in E(\Q)$.
Throughout this section  we shall need to work under the assumption that GRH holds, and we shall assume that $E$ has CM, with $\End(E)=\kappa$. 
		 
		 Given an ideal $I$ in $\kappa$, let $\kappa_I=\kappa(E[I])$ and $L_I=\kappa_I(P/I)$. Since $E$ is defined over $\Q$, it follows that $\kappa$ has class number one (as proved in  \cite[Theorem II.4.3]{advanced}). By $P/I$ we mean a point $Q$ in $E(\overline{\Q})$ such that $\alpha Q=P$ if  $I=(\alpha)$. Notice that the choice of $Q$ is not unique but the field $L_I$ does not depend on this choice. 
		 Building on work of Gupta and Murty \cite{guptamurty, guptamurty2}, we can establish the following result. 
		 
	\begin{lemma}\label{lemma:ramcond}
		Let $p$ a prime that splits in $\kappa$ and let $q\nmid 6p $ be a prime. Let $\pi_p$ be the Frobenius of $p$ in $\kappa$. Then $q\mid m_p$ if and only if there is a prime $\q_1$ over $q$ and  $k_1\geq 1$ such that $\pi_p$ splits completely in $\kappa_{\q_1^{k_1}}$ and does not split completely in $\kappa_{\q_1^{k_1+1}}$ and $L_{\q_1^{k_1}}$.
	\end{lemma}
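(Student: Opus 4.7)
The plan is to exploit the $\OO_\kappa$-module structure on $E(\F_p)$ afforded by complex multiplication. Since $p$ splits in $\kappa$ and $q \nmid 6p$, for each prime $\mathfrak{q}$ of $\OO_\kappa$ above $q$ the module $E[\mathfrak{q}^k]$ is free of rank one over $\OO_\kappa/\mathfrak{q}^k$, and the geometric Frobenius of $E/\F_p$ acts on torsion as multiplication by $\pi_p$, so $E(\F_p) = \Ker(\pi_p - 1)$. Setting $k_\mathfrak{q} := v_\mathfrak{q}(\pi_p - 1)$, a direct computation in the local ring $\OO_\kappa/\mathfrak{q}^k$ gives
$$E(\F_p)[\mathfrak{q}^\infty] = E[\mathfrak{q}^{k_\mathfrak{q}}] \cong \OO_\kappa/\mathfrak{q}^{k_\mathfrak{q}}.$$
Hence $q \mid m_p$ if and only if there exists $\mathfrak{q}\mid q$ with $k_\mathfrak{q} \geq 1$ and the $\mathfrak{q}$-primary component $\bar{P}_\mathfrak{q}$ of $\bar{P}$ in $E(\F_p)$ is non-zero.

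Next I would translate the three Galois conditions on $\pi_p$ into conditions on $\bar{P}$. The embedding $\Gal(\kappa_{\mathfrak{q}^k}/\kappa) \hookrightarrow (\OO_\kappa/\mathfrak{q}^k)^\times$ arising from the action on $E[\mathfrak{q}^k]$ sends the Frobenius at a prime above $p$ to $\pi_p \bmod \mathfrak{q}^k$; thus $\pi_p$ splits completely in $\kappa_{\mathfrak{q}^k}$ if and only if $v_\mathfrak{q}(\pi_p - 1) \geq k$, so the first two conditions of the lemma pin down $k_1 = k_{\mathfrak{q}_1}$ exactly. Writing $\mathfrak{q}_1^{k_1} = (\alpha)$ via the class number one hypothesis, the field $L_{\mathfrak{q}_1^{k_1}} = \kappa_{\mathfrak{q}_1^{k_1}}(P/\mathfrak{q}_1^{k_1})$ is Galois over $\kappa$ because the set of lifts $Q$ of $P$ with $\alpha Q = P$ is a torsor under $E[\mathfrak{q}_1^{k_1}] \subset \kappa_{\mathfrak{q}_1^{k_1}}$; assuming $\pi_p$ splits completely in $\kappa_{\mathfrak{q}_1^{k_1}}$, it splits completely in $L_{\mathfrak{q}_1^{k_1}}$ if and only if any (equivalently, some) such lift $Q$ reduces to an $\F_p$-rational point, i.e., $\bar{P} \in \alpha E(\F_p) = \mathfrak{q}_1^{k_1} E(\F_p)$.

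To close the loop I would use that multiplication by $\alpha$ annihilates $E(\F_p)[\mathfrak{q}_1^\infty] = E[\mathfrak{q}_1^{k_1}]$ and acts invertibly on the remaining primary components of $E(\F_p)$ (since $\alpha$ is a unit at every prime of $\OO_\kappa$ other than $\mathfrak{q}_1$), which yields $\mathfrak{q}_1^{k_1} E(\F_p) = \{R \in E(\F_p) : R_{\mathfrak{q}_1} = 0\}$. Hence $\pi_p$ failing to split completely in $L_{\mathfrak{q}_1^{k_1}}$ is equivalent to $\bar{P}_{\mathfrak{q}_1} \neq 0$, and combining with the preceding paragraph gives the stated equivalence. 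The main technical point to justify carefully is the identification of the arithmetic Frobenius of $\Gal(\kappa_{\mathfrak{q}^k}/\kappa)$ at primes above $p$ with the CM element $\pi_p$, which is the content of the main theorem of complex multiplication together with class field theory for $\kappa$; once this is in place the rest is module-theoretic bookkeeping.
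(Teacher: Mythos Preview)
Your proposal is correct and takes essentially the same approach as the paper: both decompose the $q$-primary part of $E(\F_p)$ as $\bigoplus_{\mathfrak q\mid q}E[\mathfrak q^{k_{\mathfrak q}}]$ with $k_{\mathfrak q}=v_{\mathfrak q}(\pi_p-1)$, translate complete splitting in $\kappa_{\mathfrak q^k}$ into $\pi_p\equiv 1\bmod\mathfrak q^k$, and identify failure to split completely in $L_{\mathfrak q_1^{k_1}}$ with non-vanishing of the $\mathfrak q_1$-component of $\bar P$ via the observation that multiplication by $\mathfrak q_1^{k_1}$ kills $E[\mathfrak q_1^{k_1}]$ and is invertible on the other primary components. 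The paper separates the cases where $q$ splits or not in $\kappa$ and cites Gupta--Murty for the Frobenius input, whereas you handle both cases uniformly in $\OO_\kappa$-module language and flag the main theorem of CM explicitly, but these are cosmetic differences.
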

	\begin{proof}
		Let $\F_{\pi_p}=\OO_\kappa/\pi_p\OO_\kappa$ where $\OO_\kappa$ is the ring of integers of $\kappa$ and notice that $\F_{\pi_p}=\F_p$.
		Recall that the ring of integers of $\kappa$ is a principal ideal domain, since $E$ is a rational elliptic curve. 
		We start by doing the case when $q$ splits in $\kappa$, so that 
		$q=\q_1\q_2$ in $\kappa$. Recall 
		from \cite[Lemma 3]{guptamurty2} 
		that $\#E(\F_p)=N(\pi_p-1)$.  
Given $j\in \NN$, we claim that $\q_1^j$ divides $(\pi_p-1)$ if and only if $\pi_p$ splits completely in  $\kappa_{\q_1^{j}}$.  If $\q_1^j$ divides $(\pi_p-1)$ then
		$
		\pi_p\equiv 1\bmod \q_1^{j},$
		 and it follows that $\pi_p$ acts trivially on $E[\q_1^{j}]$. Therefore, $\pi_p$ splits completely in $\kappa_{\q_1^{j}}$. Conversely, if $\q_1^{j}$ does not divide $(\pi_p-1)$, then $\pi_p$ does not act trivially on $E[\q_1^{j}]$ and $\pi_p$ does not split completely in $\kappa_{\q_1^{j}}$.

		Let $k_1$ (respectively,  $k_2$) be the largest integer such that $\pi_p\equiv 1\bmod{\q_1^{k_1}}$ (respectively,  modulo ${\q_2^{k_2}}$). Thus $\pi_p$ splits completely in $\kappa_{\q_1^{k_1}}$ (respectively,  $\kappa_{\q_2^{k_2}}$) and does not split completely in $\kappa_{\q_1^{k_1+1}}$  (respectively,  $\kappa_{\q_2^{k_2+1}}$). We have $\pi_p-1=\q_1^{k_1}\q_2^{k_2}I$ for $I$ an ideal coprime with $\q_1$ and $\q_2$. 
		Hence $\#E(\F_p)=q^{k_1+k_2}N(I)$ and so  the $q$-primary part of $E(\F_p)$ has order $q^{k_1+k_2}$. 
		Moreover, the $q$-primary part of $E(\F_{p})$ is $E[\q_1^{k_1}]\times E[\q_2^{k_2}]$, since $\pi_p$ acts trivially on $E[\q_1^{k_1}]$ (respectively,  $E[\q_2^{k_2}]$) and does not act trivially on $E[\q_1^{k_1+1}]$ (respectively, $E[\q_2^{k_2+1}]$). Here, we are using that $E[\q_1^{k_1}]$ and $ E[\q_2^{k_2}]$ have trivial intersection, since the two ideals are coprime.
		
		Let $E(\F_{\pi_p})=E[\q_1^{k_1}]\times E[\q_2^{k_2}]\times G$ with $G$ a group of order coprime with $q$. 
		Let $\overline{P}$ be the reduction modulo $\pi_p$ of $P$ and note that $q\mid m_p$ if and only if $q$ divides the order of $\overline{P}$. Write $\overline{P}=P_1+P_2+P_3$ with $P_1\in E[\q_1^{k_1}]$, $P_2 \in E[\q_2^{k_2}]$, and $P_3\in G$. Note that multiplication by $\q_1^{k_1}$ is an isomorphism on $E[\q_2^{k_2}]$ and $G$. Hence $\overline{P}$ is divisible by $\q_1^{k_1}$ if and only if $P_1$ is divisible by $\q_1^{k_1}$. 
		This happens if and only if $P_1=0$. In the same way, $\overline{P}$ is divisible by $\q_2^{k_2}$ if and only if $P_2=0$. Hence, $q$ does not divide the order of $\overline{P}$ if and only if $P_1=0$ and $P_2=0$. It follows that  $\overline{P}$ has order divisible by $q$ if and only if $\overline{P}/\q_1^{k_1}$ does not have a solution in $E(\F_{\pi_p})$ or $\overline{P}/\q_2^{k_2}$ does not have a solution in $E(\F_{\pi_p})$. Hence $\pi_p$ does not have a first-degree prime factor in $L_{\q_1^{k_1}}$ or in $L_{\q_2^{k_2}}$. For more details, see the proof of  \cite[Lemma 2]{guptamurty}. Since $L_{\q_1^{k_1}}/\kappa$ is Galois, this happens if and only if $\pi_p$ does not split completely. In summary, $q$ divides $m_p$ if and only if there exists a prime $\q_1$ over $q$ and a strictly positive integer $k_1$ such that $\pi_p$ splits completely in  $\kappa_{\q_1^{k_1}}$, does not split completely in $\kappa_{\q_1^{k_1+1}}$, and does not split completely in $L_{\q_1^{k_1}}$.
		
		Assume now that $q$ does not split. 
		In this case there is only one prime $\q_1$ in $\kappa$ over $q$. Let $k_1$ be such that $(\pi_p-1)=\q_1^{k_1}I$, with $I$ and $\q_1$ coprime. Proceeding as above, $E(\F_{\pi_p})=E[\q_1^{k_1}]\times G$ with $G$ of order coprime with $q$. 
		We conclude as before.
	\end{proof}
	
	Armed with this result, we can now use the Chebotarev density theorem to assess the density of rational primes that split in the 
	endomorphism ring of $E$ and satisfy the property $q\mid m_p$, for a fixed prime $q$. The analogous result for the property $q\mid \#E(\F_p)$ has been proved by Cojocaru \cite[Section 2.2]{cojocaru}.
	
	\begin{lemma}\label{lemma:cheb}
		Assume that $E$ has CM with $\End(E)=\kappa$, and  assume GRH. Let $q>3$ be a  prime of good reduction. Then there exists $\delta_q\geq 0$ such that 
		$$
\#\{p\leq x: p \text{ splits in }\kappa, q\mid m_p\}= \delta_q\li(x)+O(x^{1/2}\log^3 x),
		$$
where  the implied constant depends on  $E$ and $P$.
	\end{lemma}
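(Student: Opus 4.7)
The plan is to combine Lemma \ref{lemma:ramcond} with an effective form of the Chebotarev density theorem under GRH, in the spirit of Cojocaru's treatment \cite{cojocaru} of the analogous problem for $\#E(\F_p)$. By Lemma \ref{lemma:ramcond}, the set $\{p\leq x : p \text{ splits in }\kappa,\, q\mid m_p\}$ decomposes as the disjoint union, over primes $\q$ of $\kappa$ above $q$ and integers $k\geq 1$, of the set $\mathcal{P}_{\q,k}(x)$ of primes $p\leq x$ that split completely in $\kappa_{\q^k}$ but not completely in either $\kappa_{\q^{k+1}}$ or $L_{\q^k}$.

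First, for each pair $(\q,k)$ I would introduce the Galois extension $M_{\q,k}=L_{\q^k}\cdot\kappa_{\q^{k+1}}$ of $\QQ$, inside which the three splitting conditions cut out a union $\mathcal{C}_{\q,k}$ of conjugacy classes of $\Gal(M_{\q,k}/\QQ)$. Writing $\delta_{\q,k}=|\mathcal{C}_{\q,k}|/[M_{\q,k}:\QQ]$, the Lagarias--Odlyzko effective Chebotarev theorem (in Serre's refined form), applied under GRH, yields
\[
\#\mathcal{P}_{\q,k}(x)=\delta_{\q,k}\li(x)+O\bigl(x^{1/2}\bigl(\log d_{M_{\q,k}}+[M_{\q,k}:\QQ]\log x\bigr)\bigr),
\]
with absolute implied constant.

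Second, the condition that $p$ splits completely in $\kappa_{\q^k}$ forces $\q^k\mid(\pi_p-1)$, hence $q^k\leq N(\pi_p-1)=\#E(\F_p)\leq p+1+2\sqrt{p}$. Therefore only the levels $k\leq K:=\lceil 2\log x/\log q\rceil$ produce a non-empty $\mathcal{P}_{\q,k}(x)$ for $p\leq x$. The CM structure of $E$ ensures that $\kappa_{\q^k}$ sits inside a ray class field of $\kappa$ with conductor dividing a power of $\q$, and that $L_{\q^k}/\kappa_{\q^k}$ is Kummer of degree dividing $q^{2k}$; this yields polynomial-in-$q^k$ bounds on both $[M_{\q,k}:\QQ]$ and $\log d_{M_{\q,k}}$, with constants depending on $E$ and $P$. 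Truncating the sum at a suitable $K_0$, applying effective Chebotarev for $k\leq K_0$, and using the unconditional bound $\#\mathcal{P}_{\q,k}(x)\ll x/q^k$ for $K_0<k\leq K$ (together with the fact that $\sum_{k\geq 1}\delta_{\q,k}\ll q^{-1}$), collects the errors into the desired $O(x^{1/2}\log^3 x)$. Setting $\delta_q=\sum_{\q,k}\delta_{\q,k}$, which converges by the same geometric decay, completes the asymptotic.

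The chief obstacle is balancing the truncation parameter $K_0$. The per-level effective Chebotarev error grows polynomially in $q^k$ through $[M_{\q,k}:\QQ]$ and $\log d_{M_{\q,k}}$, so $K_0$ cannot be taken too large; conversely, the tail bound $x/q^k$ must be summed sharply beyond $K_0$ to remain within $O(x^{1/2}\log^3 x)$. The CM assumption is what makes this balancing possible: without the polynomial-in-$q^k$ control of the degrees and conductors of $\kappa_{\q^k}$ and $L_{\q^k}$ coming from the theory of ray class fields and Kummer extensions, the effective Chebotarev error would swamp the main term.
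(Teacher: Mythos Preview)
Your overall strategy matches the paper's, but there is a genuine gap in the decomposition step. When $q$ splits in $\kappa$ as $\q_1\q_2$, the sets $\mathcal{P}_{\q_1,k_1}(x)$ and $\mathcal{P}_{\q_2,k_2}(x)$ are \emph{not} disjoint: a split prime $p$ may simultaneously satisfy $\q_1^{k_1}\Vert(\pi_p-1)$ with $k_1\geq 1$ and $\q_2^{k_2}\Vert(\pi_p-1)$ with $k_2\geq 1$, together with both non-splitting conditions in $L_{\q_1^{k_1}}$ and $L_{\q_2^{k_2}}$. (For a fixed $\q$ the sets indexed by $k$ \emph{are} disjoint, since $k$ is the exact $\q$-adic valuation of $\pi_p-1$; the overlap occurs between the two primes above $q$.) Consequently your formula $\delta_q=\sum_{\q,k}\delta_{\q,k}$ overcounts. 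The paper repairs this by inclusion--exclusion over the two primes: one writes
\[
\pi_q(x)=\sum_{k_1}\#A_{\q_1,k_1}+\sum_{k_2}\#A_{\q_2,k_2}-\sum_{k_1,k_2}\#(A_{\q_1,k_1}\cap A_{\q_2,k_2}),
\]
applies effective Chebotarev to the intersections as well, and defines $\delta_q$ with the corresponding correction term $-\sum_{k_1,k_2}\delta_{\q_1,k_1,\q_2,k_2}$.

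A second, smaller point: your truncation is more elaborate than necessary. The paper observes that if $k>K:=\log(2x)/\log q$ then $q^k>\#E(\F_p)$ for every $p\leq x$, so the sets $A_{\q,k}$ are literally empty beyond $K$; no intermediate threshold $K_0$ or auxiliary bound $\#\mathcal{P}_{\q,k}(x)\ll x/q^k$ is needed. The effective Chebotarev error per level is recorded simply as $O\bigl(x^{1/2}(\log x + k\log q)\bigr)$, and summing over $k\leq K$ (and over $k_1,k_2\leq K$ for the intersections) already yields $O(x^{1/2}\log^3 x)$. The tail of the densities is then handled directly via $\delta_{\q,k}\leq 4/\bigl((q-1)q^{k-1}\bigr)$, which after multiplication by $\li(x)$ contributes $O(1)$.
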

	\begin{proof}
	Let $\pi_q(x)=\#\{p\leq x: p \text{ splits in }\kappa, q\mid m_p\}.$
		Given a prime $\q_1$ in $\kappa$, define
		$$
		A_{\q_1,k_1}=\left\{p\leq x : 
		\begin{array}{l}
		\pi_p \text{ splits completely in } \kappa_{\q_1^{k_1}}\\ 
		\pi_p \text{  does not split completely in }\kappa_{\q_1^{k_1+1}} \text{ or } L_{\q_1^{k_1}}
		\end{array}
		 \right\}.
		$$
It follows from  \cite[Theorem 1.1]{effche} (see also \cite[Lemma 7]{guptamurty2}) and an inclusion-exclusion argument, that
		\begin{equation}
		\begin{split}
		\label{eq:Aq1k1}
			\# A_{\q_1,k_1}=~& \frac{\li(x)}{[\kappa_{\q_1^{k_1}}:\kappa]}-\frac{\li(x)}{[\kappa_{\q_1^{k_1+1}}:\kappa]}-\frac{\li(x)}{[L_{\q_1^{k_1}}:\kappa]}+\frac{\li(x)}{[L_{\q_1^{k_1}}\kappa_{\q_1^{k_1}+1}:\kappa]}\\& +O(x^{1/2}(\log x+k_1\log q)).
		\end{split}\end{equation}
		Thus
		$$
		\# A_{\q_1,k_1}=\delta_{\q_1,k_1}\li(x)+O(x^{1/2}(\log x+k_1\log q)),
		$$
		with
		\begin{equation}\label{eq:delta}
		\delta_{\q_1,k_1}=\frac{1}{[\kappa_{\q_1^{k_1}}:\kappa]}-\frac{1}{[\kappa_{\q_1^{k_1+1}}:\kappa]}-\frac{1}{[L_{\q_1^{k_1}}:\kappa]}+\frac{1}{[L_{\q_1^{k_1}}\kappa_{\q_1^{k_1+1}}:\kappa]}.
	\end{equation}
		In a similar way, if $q=\q_1\q_2$, then
		$$
		\# (A_{\q_1,k_1}\cap A_{\q_2,k_2})=\delta_{\q_1,k_1,\q_2,k_2}\li(x)+O(x^{1/2}(\log x+\max\{k_1,k_2\}\log q)),
		$$
		for a suitable constant $\delta_{\q_1,k_1,\q_2,k_2}$.
		
		Assume that $(q)=\q_1\q_2$ splits in $\kappa$.
		By Lemma \ref{lemma:ramcond}, we have
		$$
		\{p\leq x: p \text{ splits in }\kappa, q\mid m_p\}=\left(\cup_{ k_1\geq 1}A_{\q_1,k_1}\right)\bigcup\left(\cup_{k_2\geq 1}A_{\q_2,k_2}\right).
		$$
		Note that, by definition, $A_{k_1,\q_1}\cap A_{k_1',\q_1}=\emptyset$ for $k_1\neq k_1'$. Hence it follows that
		\begin{align*}
		\pi_q(x)&=\sum_{k_1\geq 1}\#A_{\q_1,k_1}+\sum_{k_2\geq 1}\#A_{\q_2,k_2}-\sum_{k_1,k_2\geq 1}\#(A_{\q_1,k_1}\cap A_{\q_2,k_2}).
		\end{align*}
		If $k_1>\frac{\log 2x}{\log q}$, then $q^{k_1}>2x> \#E(\F_p)$. As we noted during the proof of Lemma~\ref{lemma:ramcond}, we have 
		$q^{k_1}\mid \#E(\F_p)$ if $p\in A_{\q_1,k_1}$. Thus, 
		$A_{\q_1,k_1}$ is empty if  $k_1>\frac{\log 2x}{\log q}$.
		Putting $K=\frac{\log 2x}{\log q}$, we deduce that 
		\begin{align*}
		\pi_q(x)=~&\sum_{k_1\leq K}\li(x)\delta_{\q_1,k_1}+\sum_{k_2\leq K}\li(x)\delta_{\q_2,k_2}
		-\sum_{\substack{k_1\leq K\\k_2\leq K}}\li(x)\delta_{\q_1,k_1,\q_2,k_2}\\&+O(K^2x^{1/2}(\log x+K\log q)).
		\end{align*}
		Putting
		$$
		\delta_q=\sum_{1
			\leq k_1}\delta_{\q_1,k_1}+\sum_{1
			\leq k_2}\delta_{\q_2,k_2}-\sum_{1\leq k_1,k_2}\delta_{\q_1,k_1,\q_2,k_2}, 
		$$
		we obtain
		\begin{align*}
\abs{\pi_q(x)-\delta_q \li(x)}
\ll~& 
x^{1/2}\log^3 x
\\
&+
\li(x)\left(\abs{\sum_{k_1> K}\delta_{\q_1,k_1}+\sum_{k_2>K}\delta_{\q_2,k_2}-
\sum_{\max\{k_1,k_2\}>K}\delta_{\q_1,k_1,\q_2,k_2}}\right).
		\end{align*}
		Notice that
		$$
		\sum_{k_1> K}\delta_{\q_1,k_1}+\sum_{k_2>K}\delta_{\q_2,k_2}-
		\sum_{\max\{k_1,k_2\}>K}\delta_{\q_1,k_1,\q_2,k_2}\geq 0
		$$
		and so 
		$$
		\left(\abs{\sum_{k_1> K}\delta_{\q_1,k_1}+\sum_{k_2>K}\delta_{\q_2,k_2}-
			\sum_{\max\{k_1,k_2\}>K}\delta_{\q_1,k_1,\q_2,k_2}}\right)\leq\sum_{k_1> K}\delta_{\q_1,k_1}+\sum_{k_2>K}\delta_{\q_2,k_2}.
		$$
		We have 
		$[\kappa_{\q_i^{k_i}}:\kappa]=q^{k_i-1}(q-1)$, for $i=1,2$, since $q$ is  a prime of good reduction and coprime to $6$.
		By \eqref{eq:delta}, 
		$$
		\delta_{\q_i,k_i}\leq \frac{4}{[\kappa_{\q_i^{k_i}}:\kappa]} =\frac{4}{q^{k_i-1}(q-1)}.
		$$ 
		By this inequality, one can easily show that $\delta_q$ is well-defined.
	Moreover,
		$$
\li(x)		\sum_{k_1>K}\delta_{\q_1,k_1}+\li(x)\sum_{k_2>K}\delta_{\q_2,k_2}\leq \frac{8\li(x)}{q-1}\sum_{k>\log 2x/\log q}\frac{1}{q^{k-1}}\leq 1
		$$
		and then
		$$
		\abs{\pi_q(x)-\delta_q \li(x)}
		\ll
		x^{1/2}\log^3 x.
		$$
		Finally, $\delta_q\geq 0$ since
		$$
		\delta_q=\lim_{x\to \infty}\frac{\pi_q(x)}{\li(x)}\geq 0.
		$$
This completes the proof of the lemma when $q$ splits in $\kappa$; the case in which $q$ does not split is similar. 
	\end{proof}
	
	We will need some control over the dependence of the leading constant $\delta_q$  on $q$, which we achieve in the following result.
	
	\begin{lemma}\label{lemma:delta}
		There exists $\lambda>0$, depending only on $E$ and $P$, such that 
		$\delta_q>q^{-\lambda}$
				for all  primes  $q>3$
				of good reduction.
	\end{lemma}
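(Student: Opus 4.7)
The strategy is to lower bound $\delta_q$ by a single term in the inclusion--exclusion formula from the proof of Lemma~\ref{lemma:cheb}, namely $\delta_{\q_1,1}$, and then control each of the four field degrees appearing in \eqref{eq:delta} via the known image of the CM Galois representation together with Kummer theory for the non-torsion point $P$.

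First, I would observe that the sets $\cup_{k_1\geq 1}A_{\q_1,k_1}$ and $\cup_{k_2\geq 1}A_{\q_2,k_2}$ from the proof of Lemma~\ref{lemma:cheb} are disjoint unions in $k_i$, and that by inclusion--exclusion for the union (whose density is precisely $\delta_q$) we have
$$
\delta_q \geq \max\Bigl\{\sum_{k_1\geq 1}\delta_{\q_1,k_1},\ \sum_{k_2\geq 1}\delta_{\q_2,k_2}\Bigr\} \geq \delta_{\q_1,1}.
$$
In the inert case the analogous bound $\delta_q\geq \delta_{\q_1,1}$ is immediate, since $\delta_q=\sum_{k_1\geq 1}\delta_{\q_1,k_1}$ is a sum of non-negative Chebotarev densities.

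Next, using \eqref{eq:delta} with $k_1=1$, I need to estimate the four indices $[\kappa_{\q_1}:\kappa]$, $[\kappa_{\q_1^2}:\kappa]$, $[L_{\q_1}:\kappa]$, and $[L_{\q_1}\kappa_{\q_1^2}:\kappa]$. For CM elliptic curves the mod $\q_1^k$ representation of $\Gal(\overline{\Q}/\kappa)$ on $E[\q_1^k]$ lands in $(\OO_\kappa/\q_1^k)^\times$, and Serre's CM image theorem (or a direct analysis via Grossencharacters) gives surjectivity for all $\q_1$ outside a finite set depending on $E$. Thus for almost all $q$ of good reduction,
$$
[\kappa_{\q_1}:\kappa]=N(\q_1)-1, \qquad [\kappa_{\q_1^2}:\kappa]=N(\q_1)(N(\q_1)-1).
$$
For the Kummer extensions $L_{\q_1^k}=\kappa_{\q_1^k}(P/\q_1^k)$, the fact that $P$ is non-torsion combined with Bashmakov--Ribet style Kummer theory for elliptic curves yields $[L_{\q_1}:\kappa_{\q_1}]=N(\q_1)$ for all but finitely many $\q_1$, and generically $L_{\q_1}$ and $\kappa_{\q_1^2}$ are linearly disjoint over $\kappa_{\q_1}$, giving $[L_{\q_1}\kappa_{\q_1^2}:\kappa]=N(\q_1)^2(N(\q_1)-1)$. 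Plugging these into \eqref{eq:delta} with $N(\q_1)=q$ (split case) yields
$$
\delta_{\q_1,1}=\frac{1}{q-1}-\frac{1}{q(q-1)}-\frac{1}{q(q-1)}+\frac{1}{q^2(q-1)}=\frac{q-1}{q^2}\gg \frac{1}{q},
$$
and with $N(\q_1)=q^2$ (inert case) the same computation gives $\delta_{\q_1,1}\gg 1/q^2$. In either case $\delta_q\gg q^{-2}$ for all but finitely many $q$, comfortably beating $q^{-\lambda}$ for any $\lambda>2$.

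Finally, the finitely many exceptional primes---those where the Galois image on $E[\q_1]$ or on $E[\q_1^2]$ is proper, or where the Kummer extension degenerates---must be handled separately. For each such $q$, the density $\delta_q$ is still non-negative by construction, and in fact strictly positive: one can check this by exhibiting a single prime $p$ (splitting in $\kappa$ if needed) with $q\mid m_p$, which forces the corresponding Chebotarev class to be non-empty and hence the density to be positive. Since there are only finitely many such $q$, we may enlarge $\lambda$ once and for all so that $q^{-\lambda}<\delta_q$ for every one of them. The main obstacle is the uniform Kummer estimate $[L_{\q_1}:\kappa_{\q_1}]=N(\q_1)$ for almost all $\q_1$: this requires invoking a non-trivial result from the Kummer theory of abelian varieties, and identifying cleanly the finite exceptional set in terms of $E$ and $P$.
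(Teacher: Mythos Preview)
Your approach and the paper's differ in an instructive way. You work at level $k_1=1$ and invoke \emph{full} Kummer surjectivity $[L_{\q_1}:\kappa_{\q_1}]=N(\q_1)$ (plus linear disjointness with $\kappa_{\q_1^2}$) for all but finitely many $q$, obtaining the sharp bound $\delta_q\gg q^{-2}$, and then propose to treat the finitely many exceptional $q$ by an ad hoc positivity argument. The paper instead only needs the much weaker statement that the Kummer extension is \emph{non-trivial}, i.e.\ $[L_{\q_1^{k}}:\kappa_{\q_1^{k}}]\geq 2$, but at a level $k=M$ that is \emph{uniform} in $q$; this is exactly what Hindry's result \cite{hindry} supplies. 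With that single input the paper gets $\delta_q\geq q^{-(M+1)}$ for \emph{every} $q>3$ of good reduction in one stroke, with no case distinction and no exceptional set.

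Your treatment of the exceptional primes has a genuine gap. You write that for each such $q$ one can ``exhibit a single prime $p$ with $q\mid m_p$'', which would force the relevant Chebotarev class to be non-empty. But exhibiting such a $p$ is precisely the content of $\delta_q>0$, and you give no mechanism for producing it. Unwinding Lemma~\ref{lemma:ramcond}, the Chebotarev class for some $A_{\q_1,k_1}$ is non-empty if and only if $L_{\q_1^{k_1}}\neq\kappa_{\q_1^{k_1}}$; at level $k_1=1$ this is exactly what fails for your exceptional $q$. To salvage the argument you must go to higher $k_1$ and know that the Kummer tower eventually becomes non-trivial---which is again Hindry's uniform bound. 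So the exceptional-prime step cannot be completed without the same ingredient the paper uses from the outset, and once you have that ingredient the paper's direct route (take $k_1=M$ for all $q$) is cleaner.
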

	\begin{proof}
		
		We do the case $q$ split, the other case being similar. Notice that $$A_{\q_1,k_1}\subseteq \{p\leq x: p \text{ splits in }\kappa, q\mid m_p\},$$ 
		and so $\delta_q\geq \delta_{\q_1,k_1}$. Thus we just need to prove that $\delta_{\q_1,k_1}\geq q^{-\lambda}$ for some $\q_1\mid q$ and $k_1\geq 1$. By \eqref{eq:delta}, we have
		\begin{align*}
		\delta_{\q_1,k_1}&\geq \frac{1}{[\kappa_{\q_1^{k_1}}:\kappa]}-\frac{1}{[L_{\q_1^{k_1}}:\kappa]}-\frac{1}{[\kappa_{\q_1^{k_1+1}}:\kappa]}\\&=\frac{1}{(q-1)q^{k_1-1}}-\frac{1}{[L_{\q_1^{k_1}}:\kappa_{\q_1^{k_1}}][\kappa_{\q_1^{k_1}}:\kappa]}-\frac{1}{(q-1)q^{k_1}}.
		\end{align*}
		If $L_{\q_1^{k_1}}\neq \kappa_{\q_1^{k_1}}$, then $[L_{\q_1^{k_1}}:\kappa_{\q_1^{k_1}}]\geq 2$ and so
		$$
		\delta_{\q_1,k_1}\geq \frac{1}{(q-1)q^{k_1-1}}\left(1-\frac{1}{2}-\frac{1}{q}\right)\geq \frac{1}{q^{k_1+1}}.
		$$
		It is known that there exists a constant $M$ (that depends only on $E$ and $P$) such that for all prime $q$ and $k\geq M$, we have $L_{q^{k}}\neq \kappa_{q^{k}}$ (see \cite[Lemma 14]{hindry}. For a reference on this problem, see \cite[Section 6]{Lombardo_2022}). Let $k\geq M$ and assume that $L_{\q_i^{k}}= \kappa_{\q_i^{k}}$ for $i=1,2$ and $\q_1\q_2=q$. Then, 
		$$
		\kappa_{\q^{k}}=\kappa_{\q_1^{k}}\kappa_{\q_2^{k}}=L_{\q_1^{k}}L_{\q_2^{k}}=L_{q^{k}},
		$$
		contradiction. Hence, $L_{\q_i^{k}}\neq \kappa_{\q_i^{k}}$ for $i$ equal $1$ or $2$. Therefore, for all $k\geq M$,
		$$
		\delta_{\q_i,k}\geq \frac{1}{q^{k+1}}
		$$
		and the lemma easily follows.
	\end{proof}
	
	We are finally ready to prove that the hypotheses 
in	Lemma \ref{lemma:lowboundmPi} are valid for an  EDS, under suitable assumptions. 
Let $\lambda>0$ be the constant appearing in the previous result.

		\begin{proposition}\label{lemma:smallestp}
				Assume that $E$ has CM and  assume GRH. 
				 Let $q>3$ be a 
				prime of good reduction. If $q$ is large enough, then there exists $p\leq q^{3\lambda}$ such that $q\mid m_p$.
	\end{proposition}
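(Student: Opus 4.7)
The plan is to combine Lemma~\ref{lemma:cheb} and Lemma~\ref{lemma:delta} by choosing $x = q^{3\lambda}$ and showing that the resulting main term dominates the error term, thereby forcing the existence of at least one prime $p \leq x$ with the desired property.

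More precisely, setting $x = q^{3\lambda}$ in Lemma~\ref{lemma:cheb}, we obtain
$$
\#\{p \leq q^{3\lambda} : p \text{ splits in }\kappa, \ q\mid m_p\} = \delta_q \li(q^{3\lambda}) + O\!\left(q^{3\lambda/2} \log^3(q^{3\lambda})\right).
$$
Lemma~\ref{lemma:delta} gives $\delta_q > q^{-\lambda}$, so the main term is at least of order
$$
q^{-\lambda} \cdot \frac{q^{3\lambda}}{\log(q^{3\lambda})} \gg \frac{q^{2\lambda}}{\log q},
$$
while the error term is of order $q^{3\lambda/2} \log^3 q$. Comparing these, the ratio of main to error is $\gg q^{\lambda/2}/\log^4 q$, which tends to infinity as $q \to \infty$. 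Hence for $q$ sufficiently large (depending only on $E$ and $P$, via $\lambda$ and the implicit constants), the main term strictly exceeds the error term, so the cardinality above is positive. This forces the existence of at least one prime $p \leq q^{3\lambda}$ satisfying $q\mid m_p$, as required.

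The only real obstacle is the quantitative comparison between the two terms; everything else follows directly from the two preceding lemmas. Note that the factor $3$ in the exponent $3\lambda$ is chosen precisely to beat the square-root saving in the error term of the effective Chebotarev estimate: any exponent strictly greater than $2\lambda$ would work, and $3\lambda$ is a convenient safe choice that leaves ample margin.
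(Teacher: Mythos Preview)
Your proof is correct and follows essentially the same route as the paper: set $x=q^{3\lambda}$, use Lemma~\ref{lemma:delta} to bound the main term in Lemma~\ref{lemma:cheb} below by something of order $x^{2/3}/\log x$, and observe that this dominates the $x^{1/2}\log^3 x$ error for large $q$. Your explicit comparison of exponents and the remark on why $3\lambda$ suffices are a welcome elaboration, but the underlying argument is identical.
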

	\begin{proof}
		Let $x=q^{3\lambda}$. Lemma 
		\ref{lemma:delta} implies that 
		$$
		\delta_q\li(x)> \frac{\li(x)}{q^\lambda}= \li(x)x^{-\frac{1}{3}}\gg \frac{x^{\frac 23}}{\log x}.
		$$
		Hence Lemma \ref{lemma:cheb} yields
		$$
		\#\{p\leq x: p \text{ splits in }\kappa, q\mid m_p\}\gg \frac{x^{\frac 23 }}{\log x}.
		$$
		Therefore, if $q$ is large enough, then
		$$
		\#\{p\leq x: p \text{ splits in }\kappa, q\mid m_p\}> 0.
		$$
	\end{proof}
	We finally achieved the goal of this section, that was to show, under suitable assumptions, that elliptic divisibility sequences satisfy the hypothesis of Lemma \ref{lemma:lowboundmPi}.
	\begin{proposition}
		Let $E$ be a rational elliptic curve defined by a Weierstrass equation with integer coefficients. Let $P\in E(\Q)$ be a non-torsion point
		and let $\{b_n\}_{n\in \NN}$ be the associated elliptic divisibility sequence. Assume that $E$ has CM and that GRH holds.
		Let $\Pi_z=\prod_{p< z} p$. Then,
		$$
		\frac{1}{\log z} \ll\frac{\varphi(m_{\Pi_{z}})}{m_{\Pi_{z}}} \ll \frac{1}{\log z}.
		$$
	\end{proposition}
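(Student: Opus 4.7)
The plan is to combine the two bounds that were already established in Section~\ref{sec:propmd}, once we verify that EDS satisfy the hypotheses of both Lemma~\ref{lemma:upboundmPi} and Lemma~\ref{lemma:lowboundmPi}. Since both hypotheses amount to statements about $m_p$ that have already been made available, the proof should be short and essentially just stitch together the work done earlier in the paper.

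For the lower bound $\frac{\varphi(m_{\Pi_{z}})}{m_{\Pi_{z}}} \gg \frac{1}{\log z}$, I would invoke Lemma~\ref{lemma:upboundmPi}. The required hypothesis is that $m_p < p^\alpha$ for all but finitely many primes, for some $\alpha > 0$. This is delivered by Lemma~\ref{lemma:bound}: at primes of good reduction the Hasse bound gives $m_p \leq p + 1 + 2\sqrt{p}$, and at the finitely many primes of bad reduction we have $m_p \leq C_E p$, so $m_p \ll p$ uniformly and we can take any $\alpha > 1$. In addition, since $m_p < \infty$ for every prime $p$, the exceptional set $S$ in Lemma~\ref{lemma:upboundmPi} is empty, so $\Pi_{z,S} = \Pi_z$ and the lemma applies directly to the quantity we care about.

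For the upper bound $\frac{\varphi(m_{\Pi_{z}})}{m_{\Pi_{z}}} \ll \frac{1}{\log z}$, I would invoke Lemma~\ref{lemma:lowboundmPi}. Its hypothesis asks for a uniform $\delta > 0$ such that for all but finitely many primes $q$, there exists a prime $p < q^\delta$ with $q \mid m_p$. This is precisely the content of Proposition~\ref{lemma:smallestp}, which is where the CM and GRH assumptions come in: under those hypotheses, once $q$ is large enough and of good reduction, one can find $p \leq q^{3\lambda}$ with $q \mid m_p$. Taking $\delta = 3\lambda$ meets the requirement of Lemma~\ref{lemma:lowboundmPi}, and again using $S = \emptyset$ so that $\Pi_{z,S} = \Pi_z$, we obtain the desired upper bound.

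Thus there is no new analytic obstacle at this stage; the work has all been done in the earlier Chebotarev arguments. The only thing to be slightly careful about is the harmless bookkeeping of absorbing the finitely many bad or exceptional primes into the implicit constant, and checking that both directions really do apply to $\Pi_z$ rather than a smaller product (which is immediate because $m_p$ is finite for every prime when we have an EDS).
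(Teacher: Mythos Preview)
Your proposal is correct and matches the paper's proof exactly: the paper's argument is simply ``Follows from Lemma~\ref{lemma:upboundmPi}, Lemma~\ref{lemma:lowboundmPi}, and Proposition~\ref{lemma:smallestp}.'' Your added remarks (invoking Lemma~\ref{lemma:bound} to get $m_p<p^\alpha$, noting $S=\emptyset$ so that $\Pi_{z,S}=\Pi_z$, and observing that the finitely many small or bad primes are absorbed into the implied constant) are all correct and make explicit what the paper leaves implicit.
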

	\begin{proof}
		Follows from Lemma \ref{lemma:upboundmPi}, Lemma \ref{lemma:lowboundmPi}, and Proposition \ref{lemma:smallestp}.
	\end{proof}
	    \section{Sieving on Koblitz primes}\label{sec:koblitz}
	    In this section we keep the focus on EDS and return to our argument in Section \ref{sec:E-L}, involving the Eratosthenes--Legendre sieve. However, rather than working with the set of all primes, we consider the effect of taking $\mathcal{P}$ to be the set of primes $p$ for which
$E(\FF_p)$ has prime order. The infinitude of this set is an open question.

	    \begin{conjecture}[Koblitz conjecture]\label{con:kob}
	    	Let $E$ be a non-CM elliptic curve defined
	    	over $\Q$, with conductor $N_E$,  which is not $\Q$-isogenous to a curve with non-trivial $\Q$-torsion. Then there exists a
	    	positive constant $C_E$, depending on $E$, such that
	    	$$
	    	\#\left\{p\leq x:  \text{$p\nmid N_E$ and $\#E(\F_p)$  is prime} \right\}\sim C_E\frac{x}{\log^2x},
	    	$$
		as $x\to \infty$
	    \end{conjecture}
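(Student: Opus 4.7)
The plan is to prove what is in fact a famous open conjecture, of the same order of difficulty as the twin prime conjecture; rather than a rigorous argument, what I can realistically describe is the standard sieve-theoretic heuristic that produces the asymptotic, together with the obstacle that prevents it from being made rigorous.

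First, I would quantify, for each prime $\ell$, how often $\ell$ divides $\#E(\FF_p)$. The $\ell$-torsion field $K_\ell=\Q(E[\ell])$ is Galois over $\Q$ with Galois group embedded in the group of invertible $2\times 2$ matrices over $\FF_\ell$, and for $p\nmid \ell N_E$ one has $\ell\mid \#E(\FF_p)$ precisely when the Frobenius $\pi_p$ acts with eigenvalue $1$ on $E[\ell]$. Let $d_\ell$ denote the density of such conjugacy classes. An effective Chebotarev density theorem under GRH (compare Lemma~\ref{lemma:cheb}) yields
\[
\#\{p\leq x:\ell\mid \#E(\FF_p)\}=d_\ell\li(x)+O\bigl(\ell^{A}x^{1/2}\log(\ell x)\bigr),
\]
for an absolute constant $A$, and Serre's open image theorem for non-CM $E$ produces $d_\ell=1/\ell+O(1/\ell^2)$ for all but finitely many $\ell$. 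The hypothesis that $E$ is not $\Q$-isogenous to a curve with non-trivial $\Q$-torsion ensures that $d_\ell<1$ for every $\ell$, so the product of local factors does not vanish.

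Next, I would apply an Eratosthenes--Legendre or Selberg sieve to the sequence $\{\#E(\FF_p):p\leq x\}$, sifting out those $p$ for which $\#E(\FF_p)$ has a prime factor $\ell\leq z$. By the Hasse bound $\#E(\FF_p)\leq x+2\sqrt{x}+1$, so taking $z=\sqrt{x+2\sqrt{x}+1}$ forces every surviving $\#E(\FF_p)$ to be prime. Formal substitution of the local densities $d_\ell$ gives the heuristic
\[
\#\{p\leq x:p\nmid N_E,\ \#E(\FF_p)\text{ prime}\}\approx \li(x)\prod_{\ell}\frac{1-d_\ell}{1-1/\ell},
\]
whose right-hand side is of the form $C_E x/\log^2 x$ with $C_E$ the standard Koblitz--Zywina constant.

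The main obstacle, and the reason the conjecture is open, is twofold. Even under GRH, the Chebotarev error above allows the sieve to operate only with level of distribution well below $x^{1/2}$, which falls short of the $z\sim\sqrt{x}$ required to detect primality directly. More fundamentally, this is a parity-problem situation: we are sieving a sequence indexed by primes in order to detect primes in a second sequence, and classical combinatorial sieves cannot distinguish integers according to the parity of $\Omega$. The best that has been achieved along these lines, by Miri--Murty, Cojocaru, Zywina and others, is a conditional upper bound of the correct order together with a lower bound off by a constant factor; obtaining the asymptotic would require genuinely new arithmetic input of Bombieri--Vinogradov or Type~II bilinear form strength for the sequence $\{a_p\}$, which is not currently available.
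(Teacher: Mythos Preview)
You have correctly identified that this is an open conjecture, not a result the paper proves. In the paper it is stated as Conjecture~\ref{con:kob} and is used only as a \emph{hypothesis} in Lemma~\ref{lemma:cardAkob} and Theorem~\ref{thm:finalkob}; there is no accompanying proof to compare against. Your heuristic account of why one expects an asymptotic of the shape $C_E x/\log^2 x$, and your explanation of the level-of-distribution and parity obstructions that prevent it from being made rigorous, are accurate and in line with the standard literature.
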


It follows from this conjecture that 
	    	\begin{equation}\label{eq:kob}
	    	\sum_{
		\substack{
		p\nmid N_E\\
		 \#E(\F_p) \text{ is prime}}}\frac{1}{p}<\infty.
	    	\end{equation}
The convergence of this sum has been established  by Cojocaru \cite[Cor.~8]{cojocaru} 
for non-CM  elliptic curves $E$ over $\QQ$, 
under the assumption that GRH holds.

Let $\{b_n\}_{n\in \NN}$ be an EDS 
associated to a non-CM elliptic curve $E$ and a point $P\in E(\QQ)$ of infinite order. 
Let
$$
	A(N,z)=\left\{n\leq N :\gcd(b_n,\Pi)=1\right\},
$$
where $\Pi=\Pi(z,\mathcal{P})$ is given by \eqref{eq:Pi} 
and 
$$
\mathcal{P}=\{p : \#E(\F_p) \text{ is prime}\}.
$$
If $\mathcal{P}$ is empty, then trivially $\#A(N,z)=N$. Since the goal of this section is to estimate $\#A(N,z)$, we shall assume that $\mathcal{P}$ is not empty (which follows if  we assume the Koblitz conjecture).
We begin by establishing the following result.

	    \begin{lemma}\label{lemma:cardAkob}
	    Assume  that Conjecture \ref{con:kob} or GRH holds.
	    	Then there exists a decreasing function
		$h(z):\R_{\geq 2}\to (0,1]$   with $\lim_{z\to\infty} h(z)=\delta \in(0,1)$, such that 
	    	$$
		\#A(N,z)=Nh(z)+O(2^{\omega(\Pi)}).
		$$
	    \end{lemma}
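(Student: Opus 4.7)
The plan is to invoke Lemma \ref{lemma:EL} with $\mathcal{P} = \{p : \#E(\F_p) \text{ is prime}\}$, which gives $\#A(N,z) = N M(\Pi) + O(2^{\omega(\Pi)})$, and then to identify $h(z) := M(\Pi)$ with a convergent Mertens-type product, from which all the claimed properties follow.

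The first step is to exploit the special structure of $m_p$ when $p \in \mathcal{P}$. By Lemma \ref{lemma:bound} we have $m_p \mid \#E(\F_p)$, and since $m_p \geq 2$ (because $b_1 = 1$), the primality of $\#E(\F_p)$ forces $m_p = \#E(\F_p) =: N_p$. I would then group the primes $p \in \mathcal{P} \cap [1,z]$ by the value of $N_p$: write $\mathcal{P}_N = \{p \in \mathcal{P} \cap [1,z] : N_p = N\}$ and $\mathcal{N}(z) = \{N : \mathcal{P}_N \neq \emptyset\}$. For each squarefree $d \mid \Pi$, uniquely decomposed as $d = \prod_N \prod_{p \in S_N} p$ with $S_N \subseteq \mathcal{P}_N$, Lemma \ref{lemma:lcm} gives $m_d = \prod_{N : S_N \neq \emptyset} N$. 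Substituting into \eqref{eq:Mz} and factoring the resulting sum over $N$ produces
$$
M(\Pi) = \prod_{N \in \mathcal{N}(z)} \left( 1 + \frac{1}{N} \sum_{\emptyset \neq S \subseteq \mathcal{P}_N} (-1)^{|S|} \right) = \prod_{N \in \mathcal{N}(z)} \left( 1 - \frac{1}{N} \right),
$$
since $\sum_{S \subseteq A}(-1)^{|S|}$ vanishes for any non-empty finite $A$. This identity is the crux of the argument.

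From the product form, monotonicity of $h$ and the bound $h(z) \in (0,1]$ are immediate, as enlarging $z$ only introduces further factors in $(0,1]$. For the limit, setting $\mathcal{N} = \bigcup_z \mathcal{N}(z)$, the Hasse bound gives $N_p \asymp p$, so
$$
\sum_{N \in \mathcal{N}} \frac{1}{N} \leq \sum_{p \in \mathcal{P}} \frac{1}{N_p} \ll \sum_{p \in \mathcal{P}} \frac{1}{p} < \infty
$$
by \eqref{eq:kob}. The product $\prod_{N \in \mathcal{N}}(1 - 1/N)$ therefore converges to some $\delta > 0$, and the standing non-emptiness of $\mathcal{P}$ makes $\mathcal{N}$ non-empty, forcing $\delta < 1$.

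The principal obstacle is purely analytic: the convergence of $\sum_{p \in \mathcal{P}} 1/p$ is highly non-trivial and is precisely where Conjecture \ref{con:kob} (or, alternatively, Cojocaru's GRH-conditional result) enters. Once this input is granted, all remaining steps are formal algebraic manipulation of the inclusion-exclusion sum together with the elementary observation that $m_p$ is forced to equal $\#E(\F_p)$ for $p \in \mathcal{P}$.
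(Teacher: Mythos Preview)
Your argument is correct and reaches the same product formula as the paper, but by a slightly different route. The paper invokes the general identity of Lemma~\ref{lemma:summd},
\[
M(\Pi)=\sum_{\substack{j\mid m_\Pi\\ \gcd(b_j,\Pi)=1}}\frac{\varphi(m_\Pi/j)}{m_\Pi},
\]
and then observes that, since each $m_p$ (for $p\in\mathcal{P}$) is prime, any $j\mid m_\Pi$ with $j>1$ is divisible by some $m_p$, forcing $p\mid b_j$ and hence $\gcd(b_j,\Pi)\neq 1$; only the term $j=1$ survives, giving $M(\Pi)=\varphi(m_\Pi)/m_\Pi=\prod_{p\in\mathcal{P}',\,p\le z}(1-1/m_p)$, where $\mathcal{P}'$ picks one representative for each distinct value of $m_p$. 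You bypass Lemma~\ref{lemma:summd} entirely, instead grouping the primes of $\Pi$ by the common value $N_p=m_p$ and factoring the M\"obius sum directly; the identity $\sum_{\emptyset\neq S\subseteq A}(-1)^{|S|}=-1$ collapses each block to $(1-1/N)$. Your $\prod_{N\in\mathcal{N}(z)}(1-1/N)$ is literally the same product as the paper's $\prod_{p\in\mathcal{P}'}(1-1/m_p)$. Your route is a little more self-contained; the paper's route illustrates another use of Lemma~\ref{lemma:summd}. The treatment of convergence via the Hasse bound and~\eqref{eq:kob} is essentially identical in both.

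One small point worth making explicit: your appeal to Lemma~\ref{lemma:bound} for $m_p\mid\#E(\F_p)$ is stated there only for primes of good reduction, so you are implicitly using that $\mathcal{P}$ consists of such primes (which is how the Koblitz set is defined in Conjecture~\ref{con:kob}).
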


	    \begin{proof}
	    	    	In view of  Lemma \ref{lemma:EL}, it suffices to study $M(\Pi)$.
		Lemma \ref{lemma:summd} yields
		$$
M(\Pi)=\sum_{\substack{j\mid m_\Pi \\\gcd(b_j,\Pi)=1}}\frac{\varphi(m_\Pi/j)}{m_\Pi}.
		$$	
Let $j\mid m_\Pi$. If $j\neq 1$, then $m_p\mid j$ for at least one $p\mid \Pi$,  since $m_p$ is prime for each $p\in \mathcal{P}$. Thus $p\mid b_j$ and then $\gcd(b_j,\Pi)\neq 1$. Let $\mathcal{P}'\subseteq \mathcal{P}$ be a subset of primes such that 
for each $m\in \{m_p: p\in \mathcal{P}\}$ there exists a unique  $p'\in \mathcal{P}'$ for which 
$m_{p'}=m$. Then
	    	$$
		M(\Pi)=\frac{\varphi(m_\Pi)}{m_\Pi}=\prod_{\substack{p\leq z\\ p\in \mathcal{P}'}}\left(1-\frac{1}{m_p}\right).
	    	$$
	    	Let
	    	$$
	    	h(z)=\prod_{\substack{p\leq z\\ p\in \mathcal{P}'}}\left(1-\frac{1}{m_p}\right).
	    	$$
	    	If the set $\{p\leq z: p\in \mathcal{P}'\}$ is empty, we put $h(z)=1$. Notice that, for $z$ large enough, the set is not empty and then $h(z)<1$.
	    	The function $h$ is clearly decreasing, positive, and satisfies $h(z)\leq 1$ for all $z$. 
	    	We may write
	    	$$
	    	h(z)=\exp\left(\log\left(\prod_{
	\substack{	p\leq z\\ p\in \mathcal{P}'}}\left(1-\frac{1}{m_p}\right)
	\right)\right)=\exp\left(-\sum_{
\substack{p\leq z\\ p\in \mathcal{P}'}}\sum_{n\geq 1}\frac{1}{n m_p^n}\right).
	    	$$
	    	It follows from the Hasse bound  that $m_p\geq p/2$, whence
	    	$$
	    	\sum_{\substack{p\leq z\\ p\in \mathcal{P}'}}\sum_{n\geq 2}\frac{1}{n m_p^n}=O\left(1\right).
	    	$$
		Furthermore, it follows from \eqref{eq:kob} that the sum 
		$$
		\sum_{\substack{p\leq z\\ p\in \mathcal{P}'}}\frac{1}{m_p}$$ 
		has a limit as $z\to \infty$. So, $$\sum_{
			\substack{p\leq z\\ p\in \mathcal{P}'}}\sum_{n\geq 1}\frac{1}{n m_p^n}=O(1).$$ The statement of the lemma follows.
	    \end{proof}

Conditionally under 
 Conjecture \ref{con:kob} or GRH, we are now ready to prove that a positive proportion of elements of the sequence $\{b_n\}_{n\in \NN}$ are devoid of primes small prime factors $p$ for which $\#E(\F_p)$  is prime.

	    \begin{theorem}\label{thm:finalkob}
	    Assume that $E$ is a non-CM elliptic curve. 
	    	    Assume  that Conjecture \ref{con:kob} or GRH holds, and let 
		    $\mathcal{P}=\{p : \#E(\F_p) \text{ is prime}\}$. Then 
there exists  $\delta\in (0,1)$ such that 
	    	$$
	    	\#\{n\leq N:  (\text{$p\mid b_n$ and $
		p\in \mathcal{P}$}) \implies p>\log N\}\sim \delta N,
	    	$$
	     as $N\to \infty$.
	    \end{theorem}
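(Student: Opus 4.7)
The plan is to apply Lemma \ref{lemma:cardAkob} directly with the choice $z = \log N$, since the set in question is precisely $A(N, \log N)$ for the Koblitz prime set $\mathcal{P} = \{p : \#E(\F_p) \text{ is prime}\}$. Indeed, the condition ``$p \mid b_n$ and $p \in \mathcal{P}$ imply $p > \log N$'' is equivalent to $\gcd(b_n, \Pi) = 1$, where $\Pi = \prod_{p \leq \log N,\, p \in \mathcal{P}} p$.

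First I would verify that the error term in Lemma \ref{lemma:cardAkob} is negligible at this scale. By the prime number theorem, $\omega(\Pi) \leq \pi(\log N) \ll \log N / \log\log N$, so
\[
2^{\omega(\Pi)} \leq 2^{\pi(\log N)} = \exp\!\left(O\!\left(\tfrac{\log N}{\log\log N}\right)\right) = N^{o(1)},
\]
which is $o(N)$ as $N \to \infty$.

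Next, Lemma \ref{lemma:cardAkob} furnishes a decreasing function $h : \R_{\geq 2} \to (0,1]$ with $\lim_{z\to\infty} h(z) = \delta \in (0,1)$, such that $\#A(N,z) = N h(z) + O(2^{\omega(\Pi)})$. Setting $z = \log N$ and combining with the error estimate above gives
\[
\#A(N, \log N) = N h(\log N) + o(N).
\]
Since $h(\log N) \to \delta$ as $N \to \infty$, we conclude that $\#A(N, \log N) \sim \delta N$, which is the desired asymptotic.

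There is essentially no obstacle here beyond checking these two routine points: the positivity of $\delta$ is already packaged into Lemma \ref{lemma:cardAkob} (and traces back to the convergence of $\sum_{p \in \mathcal{P}'} 1/m_p$ via the Hasse lower bound $m_p \geq p/2$ together with \eqref{eq:kob}, which in turn uses Conjecture \ref{con:kob} or, unconditionally for non-CM curves, GRH via \cite[Cor.~8]{cojocaru}), and the smallness of the error term is a direct application of the prime number theorem. All the heavy lifting — sieve identity, factorisation of $M(\Pi)$ via Lemma \ref{lemma:summd}, and the use of the Koblitz hypothesis to ensure convergence of the relevant Euler product — has already been carried out in the proof of Lemma \ref{lemma:cardAkob}.
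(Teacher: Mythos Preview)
Your proof is correct and follows essentially the same route as the paper: set $z=\log N$, bound $2^{\omega(\Pi)}\leq 2^{\pi(\log N)}=N^{o(1)}$ via the prime number theorem, and then invoke Lemma~\ref{lemma:cardAkob} together with $h(\log N)\to\delta$ to conclude. The paper's version phrases the final step as an $\varepsilon$-argument rather than writing $h(\log N)\to\delta$ directly, but the content is identical.
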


	    \begin{proof}
	    	We take  $z=\log N$, noting that 
	    	$$
	    	\#\{p\in \mathcal{P}: p\leq z\}\ll \frac{\log N}{\log \log N},
	    	$$ 
	    	whence  $2^{\omega(\Pi_z)}\ll N^{1/\log\log N}$.
Given $\ve>0$, it follows from Lemma \ref{lemma:cardAkob} that 
	    	$$
	    	N(\delta-\ve)-2^{\omega(\Pi_z)}<\#A(N,z)<N(\delta+\ve)+2^{\omega(\Pi_z)},
	    	$$
		if  $z$ is sufficiently large in terms of $\ve$.
 The statement of the theorem follows. 
	    \end{proof}

\newpage
\appendix

\section{By Sandro Bettin}\label{app}

\begin{center}
\textsc{On the natural density of the product of a divisor closed set\\ and the set of primes}
\end{center}

\bigskip

Given $\mathscr A, \mathscr B\subseteq\mathbb N$, let $\mathscr A\cdot \mathscr B:=\{ab\mid a\in \mathscr A, b\in \mathscr B\}$. Also, for any $m\in\mathbb Z_{\geq0}$, let $\mathbb P_m:=\{n\in\mathbb N\mid \omega(n)\leq m\}$, where $\omega(n)=\sum_{p|n}1$. Finally, we say that $\mathscr A\subseteq \mathbb N$ is divisor closed if whenever $n\in \mathscr A$ and $d|n$ one has $d\in\mathscr A$. Notice that if $\mathscr A$ is divisor closed then so is $\mathscr A \cdot \mathbb P_m$ for all $m$.

The goal of this appendix is to prove the following result.

\begin{theorem}\label{mth}
Let $m\geq0$. If $\mathscr B$ is a divisor closed set of density $0$, then also $\mathscr B\cdot \mathbb P_m$ has density $0$.
\end{theorem}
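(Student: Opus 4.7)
The plan is induction on $m$. The base case $m = 0$ is immediate since $\mathbb{P}_0 = \{1\}$, giving $\mathscr{B} \cdot \mathbb{P}_0 = \mathscr{B}$ of density $0$ by hypothesis. For the inductive step, the key observation is the factorisation $\mathbb{P}_m \subseteq \mathbb{P}_{m-1} \cdot \mathbb{P}_1$: any $b$ with $\omega(b) \leq m$ may be split as $b = b' q^\ell$ by peeling off one prime-power factor, whence $\omega(b') \leq m-1$. This gives $\mathscr{B} \cdot \mathbb{P}_m \subseteq (\mathscr{B} \cdot \mathbb{P}_{m-1}) \cdot \mathbb{P}_1$. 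Since $\mathscr{B} \cdot \mathbb{P}_{m-1}$ is divisor closed (as noted in the preamble) and of density $0$ (by the inductive hypothesis), the induction reduces to the single case $m = 1$ applied to the new divisor-closed, density-zero set $\mathscr{C} := \mathscr{B} \cdot \mathbb{P}_{m-1}$.

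For the $m=1$ case, I would parametrise each $n \in \mathscr{C} \cdot \mathbb{P}_1 \setminus \mathscr{C}$ as $n = cp^k$ with $c \in \mathscr{C}$, $p$ prime, $k \geq 1$, and $p \nmid c$; coprimality is enforced by using divisor closedness of $\mathscr{C}$ to absorb any common $p$-factor of $c$ into the prime-power part. The naive union bound becomes
\[
\#\bigl(\mathscr{C} \cdot \mathbb{P}_1 \cap [1,x]\bigr) \leq C(x) + \sum_{p} \sum_{k \geq 1} C(x/p^k),
\]
where $C(y) = \#(\mathscr{C} \cap [1,y]) = o(y)$. Splitting the inner sum at $p^k \leq x/Y$ with $C(y) \leq \varepsilon y$ for $y \geq Y$ only produces $\varepsilon x \log\log x + O(Yx/\log x)$ via Mertens's theorem, which is short of $o(x)$ by a factor of $\log\log x$.

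Overcoming this $\log\log x$ loss is the main obstacle: it requires using the divisor-closed structure of $\mathscr{C}$ in a way that the asymptotic $C(y) = o(y)$ alone cannot supply. The key remark is that each $n$ in the target set is overcounted in the double sum by a factor of $r(n) := \#\{p \mid n : n/p^{v_p(n)} \in \mathscr{C}\}$, and divisor-closedness makes a large family of prime removals simultaneously land in $\mathscr{C}$, suggesting that $r(n) \gg \log\log x$ on average over the target set. The plan is to quantify this via a Cauchy--Schwarz bound $\#\{n : r(n) \geq 1\} \leq (\sum_n r(n))^2 / \sum_n r(n)^2$, and to estimate the second moment through pair correlations $\#\{n : n/p^{v_p(n)}, n/q^{v_q(n)} \in \mathscr{C}\}$ which are themselves controlled by iterated divisor-closedness, together with a careful treatment of the exceptional $n$ with atypically small $r(n)$. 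Calibrating these pair correlations uniformly in $\mathscr{C}$, so that they reproduce the required $\log\log x$ factor, is the technical heart of the argument; once achieved the induction closes.
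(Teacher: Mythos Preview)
Your reduction to the case $m=1$ via $\mathbb{P}_m \subseteq \mathbb{P}_{m-1} \cdot \mathbb{P}_1$, together with the observation that $\mathscr{B} \cdot \mathbb{P}_{m-1}$ remains divisor closed, is correct and matches the paper exactly.

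The treatment of the $m=1$ case has a genuine gap. First, the displayed Cauchy--Schwarz bound goes the wrong way: Cauchy--Schwarz gives $\bigl(\sum_n r(n)\bigr)^2 \leq \#\{n : r(n)\geq 1\} \cdot \sum_n r(n)^2$, hence a \emph{lower} bound on $\#\{n:r(n)\geq1\}$, which is useless here. Second, and more fundamentally, the heuristic that ``divisor-closedness makes a large family of prime removals simultaneously land in $\mathscr{C}$'' is false in general. Take $\mathscr{C}$ to be the set of $y$-smooth numbers for a fixed $y$: divisor closed, density zero. For $n=mp^k$ with $m$ $y$-smooth and $p>y$, removing any prime $q\leq y$ from $n$ leaves the factor $p^k$ in place, so $n/q^{v_q(n)}\notin\mathscr{C}$; thus $r(n)=1$ exactly, and no second-moment leverage exists. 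In that particular example the naive first-moment bound still succeeds because $C(y)$ is polylogarithmic, but for a general $\mathscr{C}$ with $C(y)/y\to 0$ arbitrarily slowly neither mechanism is available. Your proposal leaves precisely the hard step---recovering the lost $\log\log x$---as an unargued assertion, and the suggested route via $r(n)$ does not appear to lead there.

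The paper's argument for $m=1$ is entirely different: it passes to complements. Writing $\mathscr{A}:=\mathbb{N}\setminus\mathscr{C}$, divisor-closedness of $\mathscr{C}$ makes $\mathscr{A}$ a set of multiples, and density zero for $\mathscr{C}$ makes $\mathscr{A}$ a Behrend sequence. One checks the identity $\mathscr{C}\cdot\mathbb{P}_1=\mathbb{N}\setminus\mathcal{M}(\mathscr{A}^*)$, where $\mathscr{A}^*$ consists of all $[a_1,\dots,a_k]$ with $k\geq2$, $a_i\in\mathscr{A}$ and $\gcd(a_1,\dots,a_k)=1$. The substantive step is then to show that $\mathscr{A}^*$ is again Behrend, which follows from a refinement of a theorem of Ruzsa and Tenenbaum: any Behrend sequence contains infinitely many Behrend subsequences whose elements are pairwise coprime across the subsequences. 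Passing to complements converts your problematic union-with-overcounting into a statement about sets of multiples, where this machinery applies cleanly.
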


\begin{remark}
The theorem does not hold without the assumption that $\mathscr B$ is divisor closed. Indeed, if $q_N$ is a sequence of primes going sufficiently slowly to infinity with $N$, then $\mathscr B=\cup_{N}\{n\in (e^{(N-1)^2}, e^{N^2}]  \mid n\equiv  1\ ({\rm mod\ } q_N)\}$ has density $0$, but $\mathscr B\cdot \mathbb P_1$ has density $1$. This can be proved easily using the fundamental lemma of sieve theory. We leave the details to the interested reader.
\end{remark}

A set $\mathscr B$ is divisor closed if and only if its complement $\mathbb N\smallsetminus \mathscr B$ is a set of multiples, where we remind that, given a (finite or infinite) sequence $\mathscr A\subseteq \mathbb N$, the set of multiples of $\mathscr A$ is $\mathcal M(\mathscr A):=\{da\mid d\in\mathbb N, a\in\mathscr A\}$. 
We say that a sequence $\mathscr A\subseteq\mathbb N$ is a Behrend sequence if $\mathcal M(\mathscr A)$ has density $1$.
 We also let ${\bf t}({\mathcal A})=1-\delta (\mathcal M(\mathcal A))$, with $\delta$ denoting the logarithmic density. We state two well-known results on Behrend sequences (see~\cite[(0.68) and Corollary 0.14]{Hall}).

\begin{lemma}\label{il}
We have that $\mathscr A=\{a_1,a_2,\dots \}\subseteq\mathbb N$ is a Behrend sequence if and only if ${\bf t}(\{a_1,\dots, a_n \})\to 0$ as $n\to\infty$. 
\end{lemma}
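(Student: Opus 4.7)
The plan is to reduce the stated equivalence to the Davenport--Erd\H{o}s theorem on sets of multiples. For each $n$, the truncation $\mathscr A_n = \{a_1,\dots,a_n\}$ is finite, so $\mathcal M(\mathscr A_n)$ is a union of finitely many arithmetic progressions, and is in particular periodic modulo $\operatorname{lcm}(a_1,\dots,a_n)$. Consequently its natural density $\mu_n := d(\mathcal M(\mathscr A_n))$ exists and coincides with its logarithmic density $\delta(\mathcal M(\mathscr A_n))$. The inclusions $\mathcal M(\mathscr A_n) \subseteq \mathcal M(\mathscr A_{n+1}) \subseteq \mathcal M(\mathscr A)$ show that $(\mu_n)$ is non-decreasing and bounded by $1$; set $L := \lim_{n\to\infty}\mu_n \leq 1$.

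The heart of the argument is the Davenport--Erd\H{o}s theorem, which asserts that $\mathcal M(\mathscr A)$ possesses a logarithmic density, that $\delta(\mathcal M(\mathscr A))$ equals the lower natural density $\underline d(\mathcal M(\mathscr A))$, and crucially that $\delta(\mathcal M(\mathscr A)) = L$. The inequality $L \leq \delta(\mathcal M(\mathscr A))$ is immediate from the containments above. For the reverse inequality one exploits the identity $\mathcal M(\mathscr A) \cap [1,N] = \mathcal M(\mathscr A_{k_N}) \cap [1,N]$, where $k_N := \#\{i : a_i \leq N\}$, to rewrite the truncated logarithmic sum over $\mathcal M(\mathscr A)$ in terms of a finite truncation and then to interchange the limits $N \to \infty$ and $n \to \infty$. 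This interchange of limits is the main technical obstacle; however the argument is classical and I would simply invoke Hall~\cite{Hall} rather than reproduce it.

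Granting Davenport--Erd\H{o}s, the lemma follows at once. By definition, $\mathscr A$ is a Behrend sequence precisely when $\mathcal M(\mathscr A)$ has density $1$. Since $\delta(\mathcal M(\mathscr A)) = \underline d(\mathcal M(\mathscr A))$ by the theorem, this is equivalent to $\delta(\mathcal M(\mathscr A)) = 1$, hence to $L = 1$. On the other hand ${\bf t}(\mathscr A_n) = 1 - \mu_n$ and $\mu_n \uparrow L$, so the condition ${\bf t}(\mathscr A_n) \to 0$ is also equivalent to $L = 1$. The two conditions therefore coincide, proving the lemma. The only genuine work is packaged inside Davenport--Erd\H{o}s; everything else is bookkeeping.
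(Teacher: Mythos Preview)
Your proof is correct. The paper does not supply its own argument for this lemma but simply cites Hall~\cite[(0.68) and Corollary~0.14]{Hall}; your reduction to the Davenport--Erd\H{o}s theorem is precisely the standard argument underlying that citation, so there is nothing further to compare.
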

\begin{lemma}\label{df}
Let $\mathscr A, \mathscr B\subseteq\mathbb N$. If $\mathscr A\cup \mathscr B$ is a Behrend sequence, then so is at least one of $\mathscr A$ and $\mathscr B$.
\end{lemma}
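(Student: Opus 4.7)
The plan is to derive the lemma as an immediate corollary of the Heilbronn--Rohrbach inequality
$$
{\bf t}(\mathscr A \cup \mathscr B) \;\geq\; {\bf t}(\mathscr A)\,{\bf t}(\mathscr B),
$$
which is a classical result on sets of multiples (appearing as (0.68) in \cite{Hall}). Once this inequality is granted, the lemma collapses to a one-liner: if $\mathscr A \cup \mathscr B$ is Behrend then the left-hand side vanishes, so the product ${\bf t}(\mathscr A)\,{\bf t}(\mathscr B)$ must be zero, so at least one factor vanishes, and the corresponding sequence is itself Behrend.

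For completeness I would sketch a proof of Heilbronn--Rohrbach. Writing $E_{\mathscr X} := \mathbb N \setminus \mathcal M(\mathscr X)$ and using $\mathcal M(\mathscr A \cup \mathscr B) = \mathcal M(\mathscr A) \cup \mathcal M(\mathscr B)$, the inequality is equivalent to the positive-correlation statement
$$
\delta(E_{\mathscr A} \cap E_{\mathscr B}) \;\geq\; \delta(E_{\mathscr A})\,\delta(E_{\mathscr B}),
$$
asserting that the divisor-closed complements of two sets of multiples are positively correlated for logarithmic density. The first step is to reduce to finite truncations $\mathscr A_n, \mathscr B_n$ by means of Lemma \ref{il}: both sides are continuous under monotone limits of the $E$'s, a fact ultimately underwritten by the Davenport--Erdős theorem that logarithmic densities of $\mathcal M(\mathscr A_n)$ converge to $\delta(\mathcal M(\mathscr A))$. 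For finite truncations, $E_{\mathscr A_n}$ and $E_{\mathscr B_n}$ are periodic modulo $L := \mathrm{lcm}(\mathscr A_n \cup \mathscr B_n)$, and their indicators are monotone decreasing along the divisibility order on $\mathbb Z/L$. The Chinese remainder theorem identifies the uniform measure on $\mathbb Z/L$ with the product of uniform measures on the prime-power components, and on this product lattice the FKG inequality for two monotone decreasing indicators yields the required positive correlation. Passing the inequality back through the limit closes the argument.

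The main obstacle is, of course, Heilbronn--Rohrbach itself; once it is in hand the lemma is automatic, and no delicate sieve estimates are required. In an appendix that otherwise cites \cite{Hall} for the foundational facts on Behrend sequences, the most natural presentation is simply to invoke (0.68) directly rather than to reproduce the FKG--based derivation.
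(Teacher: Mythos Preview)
Your proposal is correct and matches the paper exactly: the appendix gives no proof for this lemma, merely stating it as a well-known fact with the citation \cite[(0.68) and Corollary~0.14]{Hall}, and (0.68) is precisely the Heilbronn--Rohrbach inequality you invoke. Your additional FKG sketch of that inequality is sound and goes beyond what the paper provides, but your own final paragraph already anticipates that simply citing Hall is the natural presentation here.
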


Ruzsa and Tenenbaum~\cite[Theorem~2]{Ruzsa-Tenenbaum} showed that any Behrend sequence can be split into an infinite disjoint union of Behrend sequences. A modification of their proof gives the following result (see also~\cite[Corollary 0.15]{Hall}).
\begin{proposition}\label{mpr}
Let $\mathscr A$ be a Behrend sequence. Then $\mathscr A$ contains infinitely many disjoint  Behrend sequences $\mathscr A_1,\mathscr A_2,\dots$ such that $(a,b)=1$ for all $a\in \mathscr A_i$, $b\in \mathscr A_j$ with $i\neq j$.
\end{proposition}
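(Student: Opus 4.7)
My approach is to adapt the iterative extraction used in Ruzsa and Tenenbaum's proof of their Theorem~2 (which already yields infinitely many disjoint Behrend subsequences of $\mathscr{A}$, without any coprimality condition) so as to additionally enforce pairwise disjoint prime supports across the extracted pieces.

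First, I fix a partition of the primes $\mathbb{P} = P_1 \sqcup P_2 \sqcup \cdots$ into infinitely many infinite subsets, each satisfying $\sum_{p \in P_i} 1/p = \infty$; this is elementary. The target subsequences are
\[
\mathscr{A}_i = \{a \in \mathscr{A} : p \mid a \Rightarrow p \in P_i\},
\]
for which the pairwise coprimality across distinct indices is automatic.

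Next, I would build the $\mathscr{A}_i$ in stages by a diagonal construction. At stage $n$, for every $i \leq n$, one appends to the growing approximation of $\mathscr{A}_i$ a finite set $F_i^{(n)} \subseteq \mathscr{A}$ whose prime factors all lie in $P_i$, chosen so that ${\bf t}\bigl(\bigcup_{k \leq n} F_i^{(k)}\bigr) < 2^{-n}$. Once this is achieved for all $i$ and $n$, Lemma~\ref{il} yields that every $\mathscr{A}_i = \bigcup_n F_i^{(n)}$ is Behrend, completing the proof.

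The main obstacle is precisely this existence claim: one must show that the partition $\{P_i\}$ can be arranged so that $\mathscr{A} \cap \{a : \pi(a) \subseteq P_i\}$ (with $\pi(a)$ denoting the set of prime divisors of $a$) contains finite subsets of arbitrarily small ${\bf t}$-value for every $i$. The natural route is to decompose $\mathscr{A}$ according to the ``prime-support signature'' $\sigma(a) = \{i : \pi(a) \cap P_i \neq \emptyset\}$ and invoke Lemma~\ref{df}, which guarantees that at least one signature class is Behrend; the subtlety is that Lemma~\ref{df} does not directly single out a \emph{singleton} signature class, which is what is needed to feed the diagonal construction. Overcoming this---by iteratively refining the prime partition while coordinating the simultaneous growth of each $\mathscr{A}_i$ so that no channel is starved of Behrend material---is the delicate modification of the Ruzsa--Tenenbaum argument referred to in the statement, and constitutes the bulk of the proof.
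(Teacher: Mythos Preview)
Your proposal has a genuine gap at exactly the point you flag: the existence of the finite sets $F_i^{(n)}$ with prime support in a prescribed $P_i$. Fixing a prime partition in advance is the wrong direction. For a concrete obstruction, take $\mathscr A=\{pq:p\in P_1,\ q\in P_2\}$ for any two disjoint infinite prime sets with divergent reciprocal sums; this $\mathscr A$ is Behrend (its set of non-multiples is contained in $\{n:\omega(n\cap P_1)=0\}\cup\{n:\omega(n\cap P_2)=0\}$, each of density~$0$), yet $\mathscr A_i=\{a\in\mathscr A:\pi(a)\subseteq P_i\}=\emptyset$ for every~$i$. Your escape clause (``iteratively refining the prime partition'') is not an argument: you have not said what the refinement is, why it terminates in a genuine partition, or why each channel ends up Behrend. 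As written, the proposal defers the entire content of the proof to an unspecified ``delicate modification''.

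The paper's argument avoids all of this by reversing the order: it never fixes a prime partition. The one-line observation that drives everything is that if $\mathscr B$ is Behrend and $S\subseteq\NN$ is \emph{finite}, then $\mathscr B[S]:=\{b\in\mathscr B:(b,s)=1\ \forall s\in S\}$ is still Behrend; this is immediate from Lemma~\ref{df}, since $\mathscr B\setminus\mathscr B[S]\subseteq\mathcal M(S)$ and $\mathcal M(S)$ has density~$<1$. With this in hand one simply peels off finite blocks $\mathscr B_1,\mathscr B_2,\ldots\subseteq\mathscr A$ one at a time: choose $\mathscr B_1\subseteq\mathscr A$ finite with ${\bf t}(\mathscr B_1)<1/2$ (Lemma~\ref{il}), pass to $\mathscr A[\mathscr B_1]$, choose $\mathscr B_2$ there with ${\bf t}(\mathscr B_2)<1/3$, and so on. The blocks are pairwise coprime by construction, and grouping them along any partition of $\NN$ into infinitely many infinite pieces gives the $\mathscr A_i$. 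No prime partition, no signature classes, no diagonalisation.
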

\begin{proof}
We start with the observation that if $\mathscr B$ is a Behrend sequence and $S\subseteq\mathbb N$ is a finite set then $\mathscr B[S]:=\{b\in\mathcal B \mid (b,s)=1\ \forall s\in S\}$ is a Behrend sequence. This follows immediately from Lemma~\ref{df} since $\mathcal M(S)$ has density smaller than $1$.

Let $\mathscr A=\{a_1,a_2,\dots\}$ and let $\mathscr B_1:=\{a_1,\dots, a_{n_1}\}$ with $n_1$ such that ${\bf t}({\mathscr B_1})<1/2$, as possible by Lemma~\ref{il}. Also, let $\mathscr A^{(1)}:=\mathscr A[\mathscr B_{1}]=\{a_1',a_2',\dots\}$, which is a Behrend set by the above observation. Next, we let $\mathscr B_2:=\{a'_1,\dots, a'_{n_2}\}$, with $n_2$ such that ${\bf t}({\mathscr B_2})<1/3$, and $\mathscr A^{(2)}:=\mathscr A^{(1)}[\mathscr B_{2}]$. 	Repeating this process we define sequences $\mathscr B_1,\mathscr B_2,\dots$ such that ${\bf t}({\mathscr B_j})<1/(j+1)$ for all $j\geq1$. By construction $(b_i,b_j)=1$ if $b_i\in\mathscr B_i ,b_j\in\mathscr B_j $ with $i\neq j$. In order to conclude it suffices to let $\mathscr A_i=\bigcup_{n\in \mathcal N_i}\mathscr {B}_n$, where $\mathcal N_1,\mathcal N_2,\dots$ is any sequence of disjoint infinite subsets of $\mathbb N$. Indeed, ${\bf t}({\mathscr A_i})\leq \inf_{n\in\mathcal N_i}(1/(1+n))=0$ and thus $\mathscr A_i$ is a Behrend sequence by Lemma~\ref{il}.
\end{proof}
\begin{remark}
In general it is not possible to choose the sequences $\mathscr A_i$ so that $\bigcup_i \mathscr A_i=\mathscr A$. For example, this is clearly not possible in the case  $$
\mathscr A=\{p_1p_2\mid p_1,p_2\text{ distinct primes}\}.$$
\end{remark}

Given a sequence $\mathscr A\subseteq\mathbb N$, we let
\begin{align*}
\mathscr A^*:=\{[a_1,\dots,a_m]\,\mid m\geq2,\ a_i\in \mathscr{A},\  (a_1,\dots,a_m)=1\}.
\end{align*}

From Proposition~\ref{mpr} one immediately obtains the following corollary, which would hold also if we fix $m$ to be any integer greater than or equal to $2$.

\begin{corollary}\label{mcll}
If $\mathscr A\subseteq\mathbb N$ is a Behrend sequence, then so is $\mathscr A^*$.
\end{corollary}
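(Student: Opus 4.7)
The plan is to reduce the corollary to the case $m=2$ in the definition of $\mathscr A^*$, using Proposition~\ref{mpr} as the main engine. Since $\mathscr A$ is a Behrend sequence, that proposition extracts two disjoint Behrend subsequences $\mathscr A_1, \mathscr A_2 \subseteq \mathscr A$ with $(a_1,a_2)=1$ for every $a_1\in \mathscr A_1$ and $a_2 \in \mathscr A_2$. This cross-coprimality is the only structural input I need.

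With these in hand, I would note that $\mathcal M(\mathscr A_1)$ and $\mathcal M(\mathscr A_2)$ both have density~$1$, so their intersection $\mathcal M(\mathscr A_1)\cap \mathcal M(\mathscr A_2)$ has density~$1$ as well. For any $n$ in this intersection, choose $a_1\in \mathscr A_1$ and $a_2\in \mathscr A_2$ dividing $n$. By coprimality, $[a_1,a_2]=a_1 a_2$, which divides $n$ and qualifies as an element of $\mathscr A^*$ (taking $m=2$). Hence $\mathcal M(\mathscr A_1)\cap \mathcal M(\mathscr A_2)\subseteq \mathcal M(\mathscr A^*)$, so $\mathcal M(\mathscr A^*)$ has density~$1$, i.e., $\mathscr A^*$ is Behrend.

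The only potential obstacle is the construction of two disjoint Behrend subsequences with pairwise coprime elements across them, but this is exactly what Proposition~\ref{mpr} delivers. In particular, the argument only uses the first two of the infinitely many coprime Behrend subsequences it produces; no finer control over the $\mathscr A_i$ is required, and the stronger statement with $m \geq 2$ arbitrary follows as an immediate byproduct.
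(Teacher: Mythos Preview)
Your proof is correct and matches the intended argument: the paper simply asserts that the corollary follows immediately from Proposition~\ref{mpr}, and your write-up spells out precisely this implication, using two of the pairwise coprime Behrend subsequences to exhibit elements of $\mathscr A^*$ with $m=2$ dividing a density-$1$ set of integers. The paper's parenthetical remark that the result also holds for any fixed $m\geq 2$ corresponds to taking $m$ of the subsequences rather than two, exactly as you indicate.
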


\begin{lemma}\label{plm}
We have that $\mathcal M(\mathscr A^*)=(\mathcal M(\mathscr A))^*$. In particular, if $\mathscr A$ is the set of multiples of some set, then so is $\mathscr A^*$.
\end{lemma}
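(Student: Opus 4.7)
The plan is to establish both inclusions in $\mathcal M(\mathscr A^*) = (\mathcal M(\mathscr A))^*$ directly from the definitions, after which the ``in particular'' assertion becomes a one-line corollary: a subset $\mathscr B \subseteq \mathbb N$ is a set of multiples of some subset if and only if $\mathscr B = \mathcal M(\mathscr B)$, so under this hypothesis on $\mathscr A$ we get $\mathscr A^* = (\mathcal M(\mathscr A))^* = \mathcal M(\mathscr A^*)$, which exhibits $\mathscr A^*$ as a set of multiples. The easier inclusion is $(\mathcal M(\mathscr A))^* \subseteq \mathcal M(\mathscr A^*)$: given $n = [c_1, \dots, c_k]$ with $k \geq 2$, $c_i \in \mathcal M(\mathscr A)$, and $(c_1, \dots, c_k) = 1$, write each $c_i = d_i a_i$ with $a_i \in \mathscr A$ and set $L = [a_1, \dots, a_k]$. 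Since $a_i \mid c_i$ for all $i$, both $L \mid n$ and $(a_1, \dots, a_k) \mid (c_1, \dots, c_k) = 1$, so $L \in \mathscr A^*$ and hence $n \in \mathcal M(\mathscr A^*)$.

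For the reverse inclusion $\mathcal M(\mathscr A^*) \subseteq (\mathcal M(\mathscr A))^*$, fix $n = D \cdot [a_1, \dots, a_m]$ with $a_i \in \mathscr A$, $(a_1, \dots, a_m) = 1$, and $m \geq 2$. The goal is to produce $b_1, \dots, b_m \in \mathcal M(\mathscr A)$ with $[b_1, \dots, b_m] = n$ and $(b_1, \dots, b_m) = 1$. The coprimality of the $a_i$'s says that, for each prime $p \mid n$, there exists an index $j(p) \in \{1, \dots, m\}$ with $v_p(a_{j(p)}) = 0$; fix such a choice once and for all, and set
\[
b_i \;:=\; a_i \cdot \prod_{\substack{p \mid n \\ j(p) \neq i}} p^{\,v_p(n) - v_p(a_i)}.
\]
The exponents are nonnegative because $a_i \mid n$. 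A prime-by-prime inspection gives $v_p(b_i) = v_p(n)$ whenever $j(p) \neq i$, while otherwise $v_p(b_i) = v_p(a_i)$, and the latter equals $0$ both when $j(p) = i$ (by the choice of $j(p)$) and when $p \nmid n$ (since $a_i \mid n$). Using $m \geq 2$, for every $p \mid n$ there is some index $i \neq j(p)$ carrying the full power $p^{v_p(n)}$, which forces $[b_1, \dots, b_m] = n$; and since $v_p(b_{j(p)}) = 0$ for all $p \mid n$, we obtain $(b_1, \dots, b_m) = 1$. Each $b_i$ is a multiple of $a_i$, so $b_i \in \mathcal M(\mathscr A)$, finishing the argument.

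The main obstacle is this reverse inclusion, where one must balance two competing demands on the $b_i$'s: their l.c.m.\ must recover every prime power $p^{v_p(n)}$ appearing in $n$, while their gcd must reduce all the way down to $1$. The coprimality hypothesis on the $a_i$'s supplies, for each prime $p \mid n$, a ``witness'' index $j(p)$ at which the factor $p^{v_p(n)}$ can be omitted, and the assumption $m \geq 2$ ensures that some other index can simultaneously carry the full prime power; together these two ingredients make the prime-by-prime construction consistent across all primes at once.
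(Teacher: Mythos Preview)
Your proof is correct and follows essentially the same approach as the paper: the easy inclusion is handled identically, and for the harder inclusion both you and the paper distribute the extra prime powers among the $a_i$'s via a prime-by-prime construction. The only cosmetic difference is that the paper selects, for each prime $p$, the index $j_p$ where $v_p(a_{j_p})$ is \emph{maximal} and places the extra factor $p^{v_p(\ell)}$ only there, whereas you select an index where $v_p(a_{j(p)})=0$ and raise every \emph{other} index up to full valuation $v_p(n)$; both choices work for the same reasons.
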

\begin{proof}
Let $n\in \mathcal M(\mathscr A^*)$. Thus $n=\ell \cdot [a_1,\dots,a_m]$ with $\ell\in\mathbb N$, $m\geq2$, $a_i\in  {\mathscr A}$ and $(a_1,\dots,a_m)=1$. For any prime $p$, let $j_p\in\{1,\dots,m\}$ be such that $v_p(a_{j_p})=v_p([a_1,\dots,a_m])$, with $v_p$ denoting the $p$-adic valuation, and let
 $a_{j}':=a_j \prod_{p|\ell, j_p=j}p^{v_p(\ell)}\in\mathcal M(\mathscr A)$ for $j=1,\dots,m$. Then, $n= [a'_1,\dots,a'_m]$ and $(a'_1,\dots,a'_m)=1$ and so $n\in(\mathcal M(\mathscr A))^*$.

Vice versa, let $n\in (\mathcal M(\mathscr A))^*$. Then $n= [\ell_1a_1,\dots,\ell_m a_m]$ with $\ell\in\mathbb N$, $m\geq2$, $a_i\in  {\mathcal A}$ and $(\ell_1a_1,\dots,\ell_ma_m)=1$. Thus $(a_1,\dots,a_m)=1$ and $[a_1,\dots, a_m]$ divides $n$ and so $n\in \mathcal M(\mathscr A^*)$.
\end{proof}

\begin{proposition} \label{mpl}
Let $\mathscr A\subseteq\mathbb N$. Then
\begin{align*}
\mathbb N\smallsetminus\mathcal M (\mathscr A^*)= (\mathbb N\smallsetminus\mathcal M (\mathscr A))\cdot \mathbb P_1.
\end{align*}
\end{proposition}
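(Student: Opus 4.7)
The plan is to verify the two set inclusions separately, exploiting that elements of $\mathbb P_1$ have at most one distinct prime factor and that $\mathscr A^*$ consists of least common multiples of at least two coprime elements of $\mathscr A$.

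For the inclusion $(\mathbb N \smallsetminus \mathcal M(\mathscr A)) \cdot \mathbb P_1 \subseteq \mathbb N \smallsetminus \mathcal M(\mathscr A^*)$, I would argue by contradiction. Suppose $n = m r$ with $m \notin \mathcal M(\mathscr A)$ and $r \in \mathbb P_1$, yet $n = \ell \cdot [a_1, \dots, a_j]$ with $j \geq 2$, $a_i \in \mathscr A$, and $\gcd(a_1, \dots, a_j) = 1$. Since $r$ has at most one distinct prime factor $p$, and the $a_i$ cannot all share $p$ (as their gcd is $1$), some $a_{i_0}$ is coprime to $r$. From $a_{i_0} \mid n = m r$ and $\gcd(a_{i_0}, r) = 1$ one deduces $a_{i_0} \mid m$, contradicting $m \notin \mathcal M(\mathscr A)$.

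For the reverse inclusion, I would take $n \notin \mathcal M(\mathscr A^*)$ and produce a decomposition $n = m \cdot r$ with $m \notin \mathcal M(\mathscr A)$ and $r \in \mathbb P_1$. If already $n \notin \mathcal M(\mathscr A)$, write $n = n \cdot 1$. Otherwise, consider the finite nonempty set $D := \{a \in \mathscr A : a \mid n\}$. The key observation is that the hypothesis $n \notin \mathcal M(\mathscr A^*)$ forces the elements of $D$ to share a common prime factor: indeed, any subcollection of $D$ of size $\geq 2$ with gcd equal to $1$ would have its lcm in $\mathscr A^*$ and dividing $n$, a contradiction. In particular $\gcd(D) > 1$ when $|D| \geq 2$, while if $|D| = 1$ the unique element of $D$ is $>1$ (the case $1 \in \mathscr A$ is vacuous, since then $[1,1]=1 \in \mathscr A^*$ and $\mathcal M(\mathscr A^*) = \mathbb N$). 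Letting $p$ be a common prime divisor of $D$ and $p^k \| n$, I would set $r := p^k$ and $m := n/r$. Any $a \in \mathscr A$ dividing $m$ also divides $n$, hence belongs to $D$ and is divisible by $p$; but $p \nmid m$, a contradiction. Therefore $n = m \cdot p^k$ realises $n$ on the right-hand side.

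The crux of the argument is extracting a common prime factor of the elements of $D$ from the hypothesis $n \notin \mathcal M(\mathscr A^*)$; this is precisely what the lcm-of-coprimes construction defining $\mathscr A^*$ is designed to give. Once that prime $p$ is found, the decomposition $n = (n/p^k) \cdot p^k$ is essentially forced.
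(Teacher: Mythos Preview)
Your proof is correct. The first inclusion is handled exactly as in the paper: pick an $a_{i_0}$ coprime to the single prime underlying $r$ and deduce $a_{i_0}\mid m$, a contradiction.

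For the reverse inclusion you take a different but equally valid route. The paper argues the contrapositive: given $n\notin(\mathbb N\smallsetminus\mathcal M(\mathscr A))\cdot\mathbb P_1$ with prime factorisation $n=p_1^{r_1}\cdots p_m^{r_m}$ (necessarily $m\geq2$), each cofactor $n/p_j^{r_j}$ lies in $\mathcal M(\mathscr A)$, and since $n=[n/p_1^{r_1},\dots,n/p_m^{r_m}]$ with coprime gcd, the earlier lemma $\mathcal M(\mathscr A^*)=(\mathcal M(\mathscr A))^*$ gives $n\in\mathcal M(\mathscr A^*)$. You instead argue directly: the set $D=\{a\in\mathscr A:a\mid n\}$ must have a common prime factor $p$ (else the full collection $D$ itself would witness $n\in\mathcal M(\mathscr A^*)$), and stripping out $p^k\|n$ leaves $n/p^k\notin\mathcal M(\mathscr A)$. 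Your argument is slightly more self-contained in that it does not invoke the auxiliary identity $\mathcal M(\mathscr A^*)=(\mathcal M(\mathscr A))^*$; the paper's version, having that lemma in hand, gets a cleaner explicit description of $n$ as an element of $(\mathcal M(\mathscr A))^*$. Both approaches are natural and of comparable length.
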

\begin{proof}
We can assume $1\notin\mathscr A$ since otherwise $\mathcal M (\mathscr A)=\mathbb N$ and the claim is trivial.

Let $n\in (\mathbb N\smallsetminus\mathcal M (\mathscr A))\cdot \mathbb P_1$ so that there exist a prime $p$ and $r\geq0$ such that $p^r|n$ and $n/p^r\notin \mathcal M (\mathscr A)$. If we had $n\in \mathcal M(\mathscr A^*)$, then by Lemma~\ref{plm} we would have $n=[a_1,\dots,a_m]$ for some $m\geq2$ and $a_i\in \mathcal M(\mathscr A)$  with $(a_1,\dots,a_m)=1$.
Since $ (a_1,\dots,a_m)=1$, then there exists $j\in\{1,\dots,m\}$ such that $p\nmid a_j$. In particular, $a_j$ divides $n/p^r$ and so $n/p^r\in \mathcal M(\mathscr A)$, which is a contradiction. Thus, $n\in \mathbb N\smallsetminus\mathcal M (\mathscr A^*)$.

Now, let $n\notin (\mathbb N\smallsetminus\mathcal M (\mathscr A))\cdot \mathbb P_1$. Since $1\notin \mathscr A$ then $(\mathbb N\smallsetminus \mathcal M (\mathscr A))\cdot \mathbb P_1$ contains the prime powers. In particular, the prime factor decomposition of $n$, $n=p_1^{r_1}\cdots p_m^{r_m}$, has
 $m\geq2$.
 By hypothesis for all $j=1,\dots,m$ we have $n/p_j^{r_j}\in \mathcal M(\mathscr A)$. Also, clearly $n=[n/p_1^{r_1},\dots, n/p_m^{r_m}]$ with $(n/p_1^{r_1},\dots, n/p_m^{r_m})=1$ and thus $n\in (\mathcal M(\mathscr A))^*=\mathcal M(\mathscr A^*)$. 
\end{proof}

\begin{proof}[Proof of Theorem~\ref{mth}]
We have $\mathbb P_m=\mathbb P_{1}\cdot \mathbb P_{m-1}$ for $m\geq1$ and thus, since $\mathscr B\cdot \mathbb P_1$ is also divisor closed if $\mathscr B$ is, then by induction we can assume $m=1$.

Let $\mathscr A:=\mathbb N\smallsetminus \mathscr B$. In particular, $\mathscr A$ is a set of multiples of density $1$ and in fact $\mathcal M(\mathscr A)=\mathscr A$, so that $\mathscr A$ is a Behrend sequence. The theorem then follows by Corollary~\ref{mcll} and Proposition~\ref{mpl}.
\end{proof}

\begin{flushright}
\scriptsize{
S. Bettin}\\
\scriptsize{DIMA - Dipartimento di Matematica}\\
\scriptsize{Via Dodecaneso, 35}\\
\scriptsize{16146 Genova}\\
\scriptsize{Italy}\\
\texttt{bettin@dima.unige.it}
\end{flushright}


\begin{thebibliography}{99}
	
	
\bibitem{brauersieve}
S. Bhakta, D. Loughran, S.L. Rydin Myerson and M. Nakahara, 
The elliptic sieve and Brauer groups. 
{\em Proc.\ London Math.\ Soc.}
{\bf 126} (2023), 1884--1922.

\bibitem{BGS}
J. Bourgain, A. Gamburd and P. Sarnak,  Markoff triples and strong approximation. {\em C. R. Math.\ Acad.\ Sci.\ Paris} {\bf 354} (2016), 131--135.

\bibitem{cojocaru}
A.C. Cojocaru, 
Reductions of an elliptic curve with almost prime orders.
{\em Acta Arith.} {\bf 119} (2005), 265--289.


\bibitem{cojomurty}
A.C. Cojocaru and  M.R. Murty,
{\em An introduction to sieve methods and their applications}.
Cambridge University Press,  2005.

\bibitem{primality}
M. Einsiedler, G. Everest and T. Ward, Primes in elliptic divisibility sequences. {\em LMS Journal of Computation and Mathematics} {\bf 4} (2001), 1-13.

\bibitem{paul}
 P. Erd\H{o}s, Some recent advances and current problems in number theory.  {\em Lectures on modern
mathematics, Vol. III (ed. T.L. Saaty)}, Wiley, New York, 1965, 196--244.


\bibitem{granville23}
J. Grantham and A. Granville,
Fibonacci primes, primes of the form $2^n-k$ and beyond.
{\em Preprint}, 2023. (arXiv:2307.07894)

\bibitem{granville}
A. Granville, Classifying linear division sequences.
{\em Preprint}, 2022. 
(arXiv:2206.11823)

\bibitem{guptamurty}
R. Gupta and M.R. Murty, 
Cyclicity and generation of points mod {$p$} on elliptic
		curves. 
{\em Invent. Math.} {\bf 101}
(1990), 225--235.

\bibitem{guptamurty2}
R. Gupta and M.R. Murty, 
Primitive points on elliptic curves.
{\em Compositio Math.} {\bf 58}
(1986), 13--44.

\bibitem{hindry}
M. Hindry, Autour d'une conjecture de Serge Lang.
{\em Invent. Math.} {\bf 94.3} (1988): 575-603.

\bibitem{hooley}
C. Hooley, {\em Applications of sieve methods to the theory of numbers}. 
Cambridge Tracts in Mathematics {\bf 70},
Cambridge University Press, 1976. 

\bibitem{joni}
O. J\"arviniemi and J. Ter\"av\"ainen,
Composite values of shifted exponentials.
{\em Preprint}, 2023. (arXiv:2010.01789)

\bibitem{alex}
A. Kontorovich and J. Lagarias, On toric orbits in the affine sieve. {\em Exp.\ Math.} {\bf 30} (2021), 575--587.

\bibitem{kowalski2008large}
E. Kowalski, 
{\em The large sieve and its applications: arithmetic geometry, random walks and discrete groups}.
Cambridge University Press, 2008.

\bibitem{effche}
J.C. Lagarias and A.M. Odlyzko, 
Effective versions of the {C}hebotarev density theorem. {\em Algebraic number fields: {$L$}-functions and {G}alois
		properties ({P}roc.\ {S}ympos.\ {U}niv.\ {D}urham, {D}urham,
		1975)}, {409--464},
Academic Press, London, 1977.

\bibitem{Lenstra}
H. W. Lenstra,
On {A}rtin's conjecture and {E}uclid's algorithm in global
fields.
{\em Invent.\ Math.} {\bf 42}
(1977), 201--224.

\bibitem{Lombardo_2022}
D. Lombardo and S. Tronto,
Some uniform bounds for elliptic curves over
	$\mathbb{Q}$. 
{\em Pacific J.  Math.} {\bf 320}
(2022),
133--175.


\bibitem{luca2005prime}
F. Luca and P. St{\u{a}}nic{\u{a}}, 
Prime divisors of Lucas sequences and a conjecture of Ska{\l}ba.
{\em Int. J.  Number Theory} {\bf 1} (2005), 583--591.

\bibitem{schinzel}
A. Schinzel,
Primitive divisors of the expression {$A\sp{n}-B\sp{n}$} in
algebraic number fields.
{\em J. reine angew. Math.} {\bf 268/269} (1974), 27--33.

\bibitem{advanced}
J.H. Silverman, {\em Advanced topics in the arithmetic of elliptic curves}. Graduate Texts in Mathematics {\bf 151}, Springer Science \& Business Media, 1994.

\bibitem{arithmetic}
J.H. Silverman, {\em The arithmetic of elliptic curves}.
Graduate Texts in Mathematics {\bf 106}, 2nd ed., 
Springer, Dordrecht, 2009.

\bibitem{silverman}
J.H. Silverman, 
Wieferich's criterion and the {$abc$}-conjecture.
{\em J. Number Theory} {\bf 30}
(1988), 226--237.

\bibitem{cam}
C.L. Stewart, On divisors of Lucas and Lehmer numbers. {\em Acta Math.} {\bf 211} (2013), 291--314.


\bibitem{zagier}
D. Zagier, 
On the number of Markoff numbers below a given bound.
{\em Math.\ Comp.} {\bf 39} (1982), 709--723.


\end{thebibliography}

\begin{thebibliography}{90}          

\bibitem{Ruzsa-Tenenbaum}
I.Z. Ruzsa and G. Tenenbaum, A note on Behrend sequences. {\em Acta Math.\ Hung.} {\bf 72} (1996), 327--337.

\bibitem{Hall} R.R. Hall, \emph{Sets of multiples}. Cambridge Tracts in Math. {\bf 118}, Cambridge University Press, Cambridge, 1996.


\end{thebibliography}
\end{document}